\newtheorem{theorem}{Theorem}[section]
\newtheorem{lemma}[theorem]{Lemma}
\newtheorem{corollary}[theorem]{Corollary}
\newtheorem{proposition}[theorem]{Proposition}
\theoremstyle{definition}
\theoremstyle{remark}
\newtheorem{remark}[theorem]{Remark}
\newcommand{\rg}{\rightarrow}
\newcommand{\lrg}{\longrightarrow}
\newcommand{\Rg}{\Rightarrow}
\newcommand{\DF}{\Longleftrightarrow}
\newcommand{\df}{\longleftrightarrow}
\newcommand{\pp}{\ldots}
\newcommand{\TT}{\tilde\tau}
\newcommand{\AR}{\mathcal {A}}
\newcommand{\CR}{\mathcal {C}}
\newcommand{\DR}{\mathcal {D}}
\newcommand{\FR}{\mathcal {F}}
\newcommand{\GR}{\mathcal{G}}
\newcommand{\HR}{\mathcal {H}}
\newcommand{\JR}{\mathcal{J}}
\newcommand{\LR}{\mathcal {L}}
\newcommand{\OR}{\mathcal{O}}
\newcommand{\PR}{\mathcal{P}}
\newcommand{\SR}{\mathcal{S}}
\newcommand{\VR}{\mathcal{V}}
\newcommand{\WR}{\mathcal{W}}
\newcommand{\BB}{\mathcal {B}}
\newcommand{\BT}{\mathcal{T}}
\newcommand{\XR}{\mathcal{X}}
\newcommand{\BC}{\mathbb{C}}
\newcommand{\BD}{\mathbb{D}}
\newcommand{\BF}{\mathbb{F}}
\newcommand{\BH}{\mathbb{H}}
\newcommand{\BL}{\mathbb{L}}
\newcommand{\BP}{\mathbb{P}}
\newcommand{\BQ}{\mathbb{Q}}
\newcommand{\BS}{\mathbb{S}}
\newcommand{\BV}{\mathbb{V}}
\newcommand{\BX}{\mathbb{X}}
\newcommand{\BY}{\mathbb{Y}}
\newcommand{\BZ}{\mathbb{Z}}
\newcommand{\Sg}{\Sigma}
\newcommand{\iy}{\infty}
\newcommand{\pl}{\partial}
\newcommand{\al}{\alpha}
\newcommand{\tp}{\tilde\partial}
\newcommand{\gs}{{\bf s}}
\newcommand{\no}{\nonumber}
\newcommand{\oiint}{\frac{1}{(2\pi
i)^2}\oint\limits_{\iy}\!\!\oint\limits_{\iy}}
\newcommand{\tx}{\tilde x}
\newcommand{\ty}{\tilde y}
\newcommand{\ba}{{\backslash}}
\newcommand{\acht}{{\rm ht}}
\newcommand{\shade}{/\!\!/\!\!/\!\!/\!\!
}
\newcommand{\bshade}{\backslash\!\!
\backslash\!\!\backslash\!\!\backslash
}
\def\inn#1#2{\left\langle#1\,\left\vert\,#2\right.\right\rangle}
\def\INN{\langle\cdot\,\vert\,\cdot\rangle}
\newcommand{\vv}{||}
\def\be#1\ee{\begin{equation}#1\end{equation}}
\def\bea#1\eea{\begin{eqnarray}#1\end{eqnarray}}
\def\bean#1\eean{\begin{eqnarray*}#1\end{eqnarray*}}
\newcommand{\Pf}{\operatorname{\rm Pfaff}}
\newcommand{\Tr}{\operatorname{\rm Tr}}
\newcommand{\Mat}{\operatorname{\rm Mat}}
\newcommand{\OneOrTwo}{{\left\{\substack{1\\2}\right\}}}
\newcommand{\om}{\omega}
\newcommand{\vp}{\varphi}
\newcommand{\la}{\langle}
\newcommand{\ra}{\rangle}
\newcommand{\ga}{\gamma}
\newcommand{\Ga}{\Gamma}
\newcommand{\dt}{\delta}
\newcommand{\Dt}{\Delta}
 \newcommand{\vr}{\varepsilon}
\newcommand{\sg}{\sigma}
\newcommand{\BR}{\mathbb{R}}
\newcommand{\lb}{\lambda}
\newcommand{\Lb}{\Lambda}
\newcommand{\tr}{\mbox{tr}}
\newcommand{\dis}{\displaystyle}
\newcommand{\Bn}{\mathfrak{n}}
\newcommand{\Bk}{\mathfrak{k}}
\newcommand{\BG}{\mathbb{G}}
\newcommand{\BK}{\mathbb{K}}
\newcommand{\BJ}{\mathbb{J}}
\newcommand{\BI}{\mathbb{I}}
\newcommand{\Span}{\operatorname{span}}
\newcommand{\diag}{\operatorname{diag}}
\newcommand{\Res}{\operatorname{Res}}
\newcommand\ig[1]{{}}
\begin{document}                        


\title{Dyson's Nonintersecting Brownian Motions \\ with a Few Outliers}

\author{Mark Adler}{Brandeis University}
\author{Jonathan Del\'epine}{Universit\'e Catholique de Louvain}
 \author{Pierre van Moerbeke}{Universit\'e Catholique de Louvain \& Brandeis University}





 \begin{abstract}
 Consider $n$ non-intersecting \mbox{Brownian} particles
on $\BR$ (Dyson Brownian motions), all starting from the
 origin at time $t=0$, and forced to return to $x=0$
 at time $t=1$. For large $n$, the average mean density of particles
has its support, for each $0<t<1$, on the interval $\pm
\sqrt{2nt(1-t)}$. The Airy process $\mathcal{A}(\tau)$
is
 defined as the motion of these non-intersecting
 Brownian motions for large $n$, but viewed from the
 curve $\mathcal{C}:\quad y=\sqrt{2nt(1-t)}$ with
 an appropriate space-time rescaling.
 Assume now a finite number $r$ of these particles are
forced to a different target point, say
$a=\rho_0\sqrt{n/2}>0$. Does it affect the Brownian
fluctuations along the curve $\mathcal{C}$ for large
$n$? In this paper, we show that no new process appears
as long as one considers points $(y,t)\in \mathcal{C},
\mbox{ such that } 0<t<({1+\rho_0^2})^{-1}
 $, which is the $t$-coordinate of
the point of tangency of the tangent to the curve
passing through $(\rho_0\sqrt{n/2},1)$. At this point of tangency 
the fluctuations obey a new statistics, which we call
the {\bf Airy process with $r$ outliers} $\mathcal
{A}^{(r)}(\tau)$ (in short: {\bf $r$-Airy process}).
   The log of the probability
   that at time $\tau$ none of the particles in the 
 cloud exceeds $x$ is given by the Fredholm
 determinant of a new kernel (extending the Airy kernel)
 and it satisfies a non-linear PDE in $x$ and $\tau$,
 from which the asymptotic behavior of the process can be
 deduced for $\tau\rg -\iy$. This kernel is closely
 related to one found by Baik, Ben Arous and P\'ech\'e in
 the context of multivariate statistics.

 \end{abstract}

\maketitle



  \tableofcontents

\ig{
\newcommand{\MAT}[1]{\left(\begin{array}{*#1c}}
\newcommand{\mat}{\end{array}\right)}
\newcommand{\qed}{\leavevmode\unskip\nobreak\penalty200\hskip2pt\null
\nobreak\hfill\rule{1.1ex}{1.1ex}
 \medbreak }

\newcommand{\rg}{\rightarrow}
\newcommand{\lrg}{\longrightarrow}
\newcommand{\Rg}{\Rightarrow}
\newcommand{\DF}{\Longleftrightarrow}
\newcommand{\df}{\longleftrightarrow}
\newcommand{\pp}{\ldots}
\newcommand{\TT}{\tilde\tau}
\newcommand{\AR}{\mathcal {A}}
\newcommand{\CR}{\mathcal{C}}
\newcommand{\DR}{\mathcal{D}}
\newcommand{\FR}{\mathcal{F}}
\newcommand{\GR}{\mathcal{G}}
\newcommand{\HR}{\mathcal {H}}
\newcommand{\JR}{\mathcal{J}}
\newcommand{\LR}{\mathcal{L}}
\newcommand{\MR}{\mathcal{M}}
\newcommand{\NR}{\mathcal{N}}
\newcommand{\OR}{\mathcal{O}}
\newcommand{\PR}{\mathcal{P}}
\newcommand{\SR}{\mathcal{S}}
\newcommand{\VR}{\mathcal{V}}
\newcommand{\WR}{\mathcal{W}}
\newcommand{\BB}{\mathcal{B}}
\newcommand{\BT}{\mathcal{T}}
\newcommand{\XR}{\mathcal{X}}
\newcommand{\BC}{\mathbb{C}}
\newcommand{\BD}{\mathbb{D}}
\newcommand{\BF}{\mathbb{F}}
\newcommand{\BH}{\mathbb{H}}
\newcommand{\BL}{\mathbb{L}}
\newcommand{\BP}{\mathbb{P}}
\newcommand{\BQ}{\mathbb{Q}}
\newcommand{\BS}{\mathbb{S}}
\newcommand{\BV}{\mathbb{V}}
\newcommand{\BX}{\mathbb{X}}
\newcommand{\BY}{\mathbb{Y}}
\newcommand{\BZ}{\mathbb{Z}}
\newcommand{\Sg}{\Sigma}
\newcommand{\iy}{\infty}
\newcommand{\pl}{\partial}
\newcommand{\al}{\alpha}

\newcommand{\tp}{\tilde\partial}
\newcommand{\gs}{{\bf s}}
\newcommand{\no}{\nonumber}
\newcommand{\oiint}{\frac{1}{(2\pi
i)^2}\oint\limits_{\iy}\!\!\oint\limits_{\iy}}
\newcommand{\tx}{\tilde x}
\newcommand{\ty}{\tilde y}
\newcommand{\ba}{{\backslash}}

\newcommand{\acht}{{\rm ht}}

\newcommand{\shade}{/\!\!/\!\!/\!\!/\!\!
}
\newcommand{\bshade}{\backslash\!\!
\backslash\!\!\backslash\!\!\backslash
}

\def\inn#1#2{\left\langle#1\,\left\vert\,#2\right.\right\rangle}
\def\INN{\langle\cdot\,\vert\,\cdot\rangle}

\newcommand{\vv}{||}

\newenvironment{proof}{\medskip\noindent{\it Proof:\/} }{\qed}
\newenvironment
        {remark}{\medskip\noindent\underline{\it Remark:\/} }{\medbreak}
\newenvironment
        {example}{\medskip\noindent\underline{\it Example:\/} }{\medbreak}

\newcommand{\om}{\omega}
\newcommand{\vp}{\varphi}
\newcommand{\la}{\langle}
\newcommand{\ra}{\rangle}
\newcommand{\ga}{\gamma}
\newcommand{\Ga}{\Gamma}
\newcommand{\dt}{\delta}
\newcommand{\Dt}{\Delta}
 \newcommand{\vr}{\varepsilon}
\newcommand{\sg}{\sigma}
\newcommand{\BR}{\mathbb{R}}
\newcommand{\lb}{\lambda}
\newcommand{\Lb}{\Lambda}
\newcommand{\tr}{\mbox{tr}}
\newcommand{\dis}{\displaystyle}

\newcommand{\Bn}{\mathfrak{n}}
\newcommand{\Bk}{\mathfrak{k}}
\newcommand{\BG}{\mathbb{G}}
\newcommand{\BK}{\mathbb{K}}
\newcommand{\BJ}{\mathbb{J}}
\newcommand{\BI}{\mathbb{I}}
\newcommand{\Span}{\operatorname{span}}
\newcommand{\diag}{\operatorname{diag}}
\newcommand{\Res}{\operatorname{Res}}




\def\be#1\ee{\begin{equation}#1\end{equation}}
\def\bea#1\eea{\begin{eqnarray}#1\end{eqnarray}}
\def\bean#1\eean{\begin{eqnarray*}#1\end{eqnarray*}}

\newcommand{\Pf}{\operatorname{\rm Pfaff}}
\newcommand{\Tr}{\operatorname{\rm Tr}}
\newcommand{\Mat}{\operatorname{\rm Mat}}
\newcommand{\OneOrTwo}{{\left\{\substack{1\\2}\right\}}}





\catcode `!=11

\newdimen\squaresize
\newdimen\thickness
\newdimen\Thickness
\newdimen\ll! \newdimen \uu! \newdimen\dd! \newdimen \rr! \newdimen
\temp!

\def\sq!#1#2#3#4#5{%
\ll!=#1 \uu!=#2 \dd!=#3 \rr!=#4
\setbox0=\hbox{%
 \temp!=\squaresize\advance\temp! by .5\uu!
 \rlap{\kern -.5\ll!
 \vbox{\hrule height \temp! width#1 depth .5\dd!}}%
%
 \temp!=\squaresize\advance\temp! by -.5\uu!
 \rlap{\raise\temp!
 \vbox{\hrule height #2 width \squaresize}}%
%
 \rlap{\raise -.5\dd!
 \vbox{\hrule height #3 width \squaresize}}%
%
 \temp!=\squaresize\advance\temp! by .5\uu!
 \rlap{\kern \squaresize \kern-.5\rr!
 \vbox{\hrule height \temp! width#4 depth .5\dd!}}%
%
 \rlap{\kern .5\squaresize\raise .5\squaresize
 \vbox to 0pt{\vss\hbox to 0pt{\hss $#5$\hss}\vss}}%
}
 \ht0=0pt \dp0=0pt \box0
}

\def\vsq!#1#2#3#4#5\endvsq!{\vbox to \squaresize{\hrule
width\squaresize height 0pt%
\vss\sq!{#1}{#2}{#3}{#4}{#5}}}

\newdimen \LL! \newdimen \UU! \newdimen \DD! \newdimen \RR!

\def\vvsq!{\futurelet\next\vvvsq!}
\def\vvvsq!{\relax
  \ifx     \next l\LL!=\Thickness \let\continue=\skipnexttoken!
  \else\ifx\next u\UU!=\Thickness \let\continue=\skipnexttoken!
  \else\ifx\next d\DD!=\Thickness \let\continue=\skipnexttoken!
  \else\ifx\next r\RR!=\Thickness \let\continue=\skipnexttoken!
  \else\def\continue{\vsq!\LL!\UU!\DD!\RR!}%
  \fi\fi\fi\fi
  \continue}

\def\skipnexttoken!#1{\vvsq!}

\def\place#1#2#3{\vbox to 0pt{\vss
\rlap{\kern#1\squaresize
  \raise#2\squaresize\hbox{$#3$}}
\vss}}

\def\Young#1{\LL!=\thickness \UU!=\thickness \DD! = \thickness \RR! =
\thickness \vbox{\smallskip\offinterlineskip
\halign{&\vvsq! ##
\endvsq!\cr #1}}}

\def\blank{\omit\hskip\squaresize}
\catcode `!=12

\squaresize = 35pt \thickness = 1pt \Thickness = 3pt

}

Dyson \cite{Dyson} made the important observation that
putting dynamics into random matrix models leads to finitely many
non-intersecting Brownian motions (on ${\mathbb R}$) for the eigenvalues.
Applying scaling limits to the random matrix models,
combined with Dyson's dynamics, then leads to
interesting infinitely many diffusions for the
eigenvalues. This paper studies a model, which stems
from multivariate statistics and which interpolates
between the {\bf Airy} and {\bf Pearcey} processes.

Consider $n$ non-intersecting \mbox{Brownian} particles on the
real line $\BR$,
$$
-\iy <x_1(t)<\ldots<x_n(t)<\iy,$$
 with (local) Brownian transition probability given by
\be p(t ; x, y)  :=  \frac{1}{\sqrt{\pi t}} e^{-
\frac{(y-x)^{2}}{t}}, \label{Normal}\ee  all starting
from the origin $x=0$ at time $t=0$, and forced to
return to $x=0$ at time $t=1$.
For very large $n$, the average mean density of particles
has its support, for each $0<t<1$, on the interval
$(-\sqrt{2nt(1-t)},\sqrt{2nt(1-t)})$, as sketched in Figure \ref{fig1}.

The {\bf Airy process} ${A}(\tau)$ is defined as the
motion of these non-intersecting Brownian motions for
large $n$, but viewed from an observer on the (right
hand) edge-curve $$\mathcal{C}:\quad
\{y=\sqrt{2nt(1-t)}>0\}$$ of the set of particles, with
space stretched by the customary GUE-edge rescaling $
n^{1/6}$ and time rescaled by the factor $ n^{1/3}$ in
tune with the Brownian motion space-time rescaling; this
is to say that in this new scale, slowed down
microscopically, the left-most particles appear
infinitely far and the time horizon $t=1$ lies in the
very remote future. Thus, the Airy process describes the
fluctuations of the Brownian particles near the
edge-curve $\mathcal{C}$, looked at through a magnifying glass, as shown in Figure  \ref{fig1}.

 The Airy process was introduced by Pr\"ahofer and Spohn \cite{Spohn} and further
  investigated in
  \cite{Johansson2,Johansson3,TW-Dyson,AvM-Airy-Sine}.
  Notice that
in this work the Airy process is not viewed as the
motion of the largest particle (point process), but as
the motion of the cloud of particles, which will be
described as a determinantal process. Giving
a pathwise description of this motion remains an open
problem.

\begin{figure}
\hspace*{-2.9cm}\includegraphics[width=180mm,height=120mm]{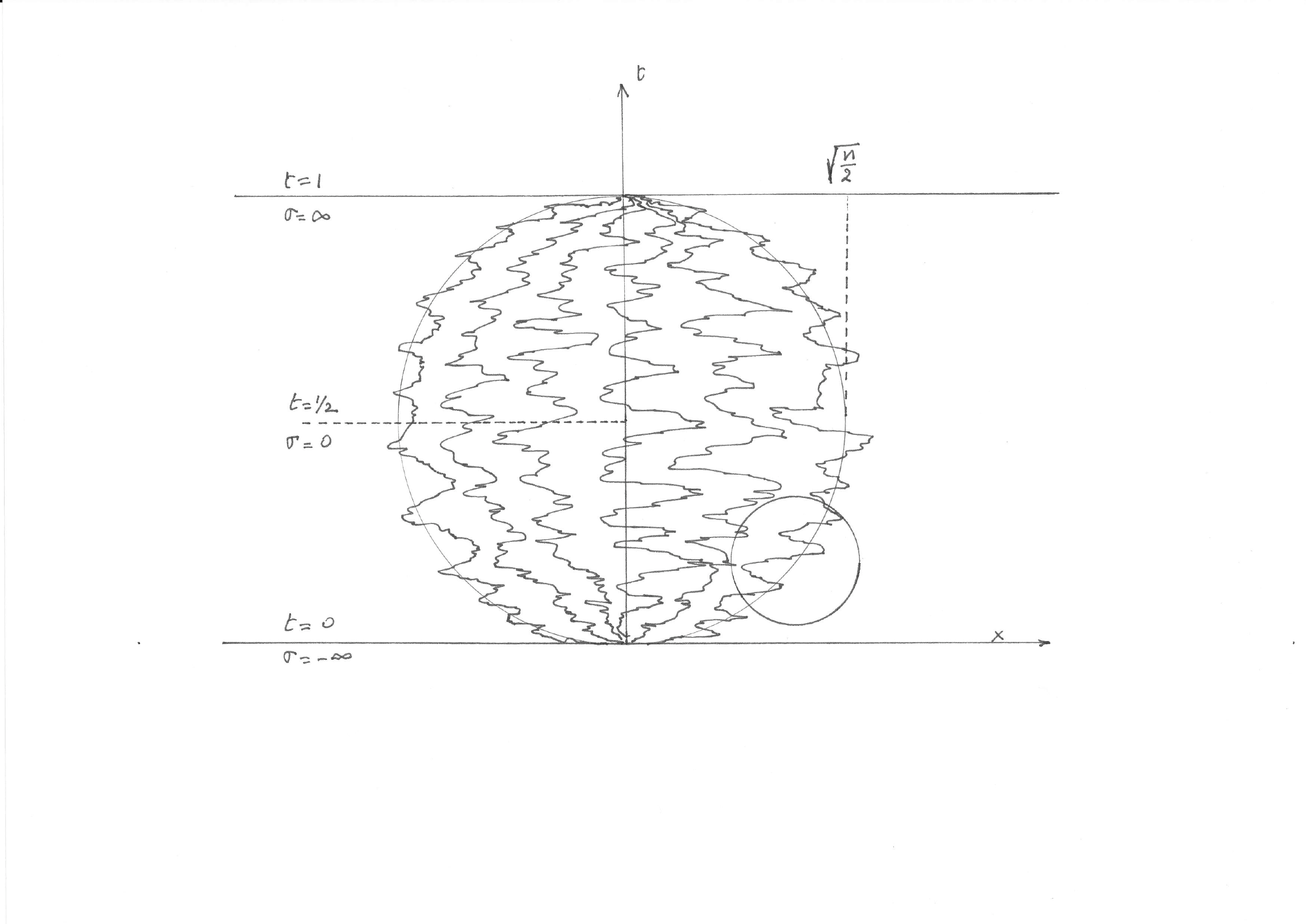}

\vspace{-3.5cm}

\caption{Airy process}\label{fig1}
\end{figure}

 \vspace*{-4cm}

$$\hspace*{9cm}\mathcal{C}:=\{x=\sqrt{2nt(1-t)}\}$$

\vspace*{3cm}

Assume now that, among those $n$ paths, $0\leq r\leq n$
are forced to reach a given final target $a \geq 0$,
\,while the $(n-r)$ remaining particles return to the
position $x=0$. Consider the probability that, at any
given time $0 < t < 1$ , all of the particles avoid a
window $E \subset \BR$, namely (the superscript in the
probability $\BP_{Br}^{0a}$ here and later refers to the
target points)
\be  \BP_{Br}^{0a}\left(\mbox{all  $x_j(t)\in
E^c$}\right)  := \! \BP\!\!
\left(\!\!\!\begin{tabular}{c|c}
& all $x_j(0) =0$\\
all $x_j(t)\in E^c$ &
 $r$  right paths end up at  $a$  at $t=1$\\
  &
$(n-r)$ paths end up at $0$ at $t=1$
\end{tabular}\!\!\!\right).
  \label{Brownian target}\ee
%
%
%
%
%
%
%
Does the fact that a finite number $r$ of particles are
forced to a different target point, in particular the
target point $a=\rho_0\sqrt{n/2}>0$ for some arbitrary
parameter $\rho_0$, affect the Brownian fluctuations
along the curve $\mathcal{C}$ for very large $n$? It is
understood here that near the points of the curve (under
consideration), one uses the same scaling as the Airy
process. In this paper, we show that no new process
appears as long as one considers points \be (y,t)\in
\mathcal{C}, \mbox{ such that }
0<t<\frac{1}{1+\rho_0^2}.
 \label{curve}\ee
Observe that $t=t_0=(1+\rho_0^2)^{-1}$ corresponds to the point of tangency $(y_0,t_0)$ of the tangent to the curve  passing through the
   point $(\rho_0\sqrt{n/2},1)$ of the $(t=1)$-axis; i.e.
 \be
 (y_0,t_0)= \left(\frac{\rho_0 \sqrt{2n}}{1+\rho_0^2},
 \frac{1}{ 1+\rho_0^2 }\right)\in  \mathcal{C}
.\label{tangency}
 \ee
   At this point of tangency $(y_0,t_0)$ the fluctuations obey a new
statistics, which we call the {\bf Airy process with
$r$ outliers} $\mathcal {A}^{(r)}(\tau)$
  (in short: {\bf $r$-Airy process}); see Figure 0.2
    In particular, for
the target point $a=\sqrt{n/2}$, the $r$-Airy process
occurs at the the maximum of the edge curve
$\mathcal{C}$; i.e., $\rho_0=1$ and thus $t_0=1/2$.
 Notice, the $r$-Airy process is an extension of the Airy
process; the $r$-Airy process coincides with the Airy
process, when the
  target point $a$ coincides with $0$ or,
  what is the same, when $r=0$. The Airy process
  is stationary, whereas the $r$-Airy process ceases to be stationary.
%
\begin{figure}
\hspace*{-1cm}\includegraphics[width=170mm,height=105mm]{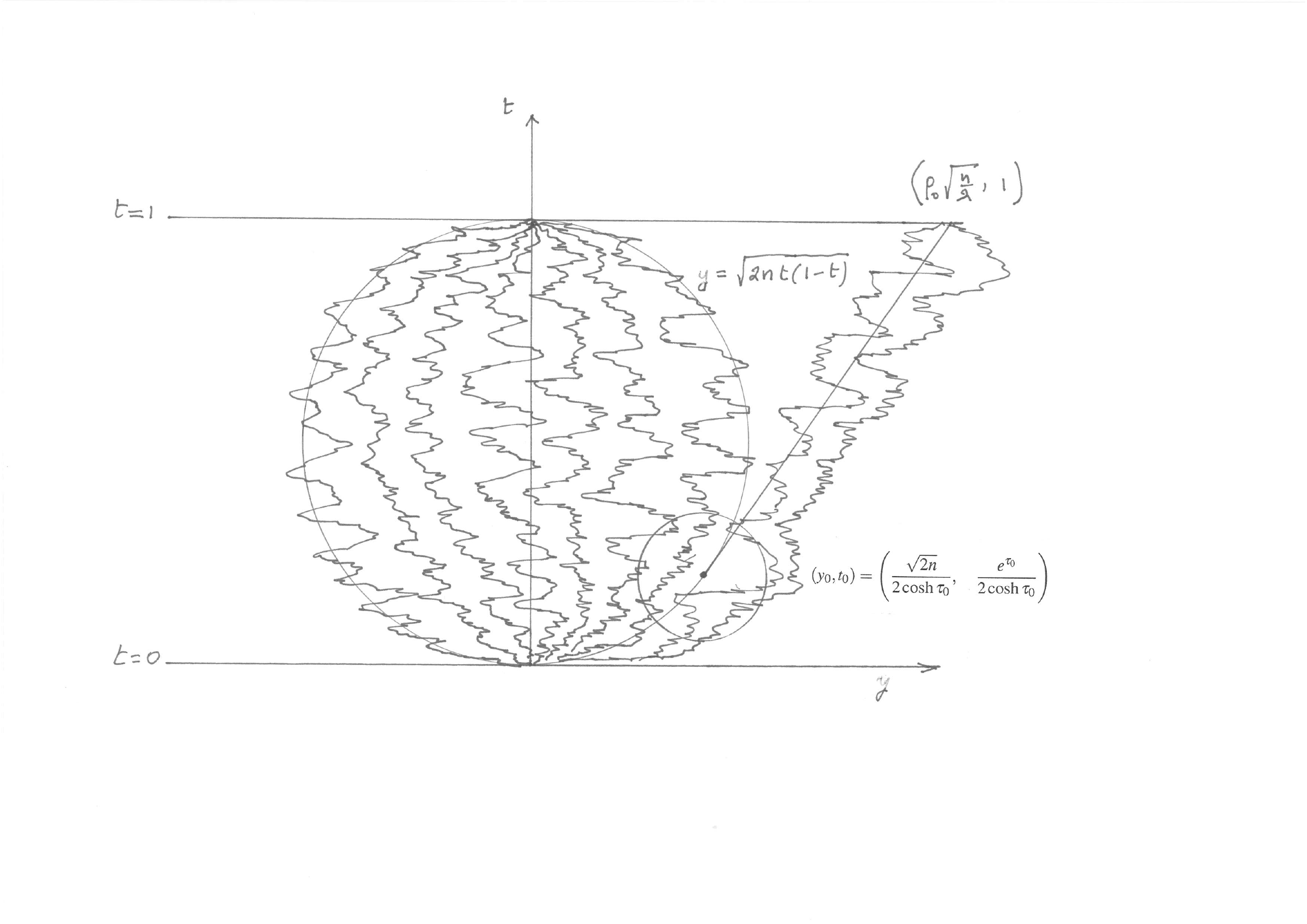}

\vspace*{-2.9cm}

\caption{The $r$-Airy process}\label{fig2}
\end{figure}


Given a target point $a=\rho_0\sqrt{n/2}$,
  the point of tangency of the tangent to the curve
  $\mathcal{C}$, passing through $a$, can be written, in
accordance with (\ref{tangency}),
   \be  (y_0,t_0)=\left(\frac{\rho_0 \sqrt{2n}}{1+\rho_0^2},
 \frac{1}{ 1+\rho_0^2 }\right)=(y_0,t_0)= \left(\frac{\sqrt{2n}    }
   {2\cosh\tau_0 }  ,\quad  \frac{e^{ \tau_0 }}
 {2\cosh \tau_0    }\right)\in \mathcal{C}
  ,\label{y0,t0}\ee
upon introducing a new parameter $\tau_0$, whose
significance as a new time parameter will be clear from
section \ref{sect1} (formula (\ref{time-change})), and
which is defined by
   \be \label{rho0}  e^{ -\tau_0}:=\rho_0=\sqrt{\frac{1-t_0}{t_0}}.\ee
Given a subset $E\subset {\mathbb R}$, the $r$-Airy
process in the new time $\tau$ will be obtained by taking the following limit:%
  \bea
  \lefteqn{ {~~~~~~~~~\BP(\mathcal {A}^{(r)}(\tau)\cap E
  =\emptyset)}}
   \label{r-AiryP}\\&&
  \! := \!
   \lim_{n\rightarrow \iy}
\BP^{(0,\rho_0\sqrt{n/2})}_{Br}\left(\mbox{all}~
x_i\left( \frac{1}
 {1+e^{-2(\tau_0+\frac{\tau}{n^{1/3}})}}\right) \in
  \frac{\sqrt{2n}+\frac{E^c}{\sqrt{2}n^{1/6}}   }
   {2\cosh(\tau_0+\frac{\tau}{n^{1/3}})}
   \right)
 .\no\eea
 Notice that for $\tau=0$ and upon ignoring the set $E^c$, space and time on the right hand side of the formula above
  equal $(y_0,t_0)$ as in (\ref{y0,t0}). In section \ref{sect2}, the limit above will be shown to exist, independently of the parameter $\rho_0$, at the same time establishing some universality.

  In order to state Theorem \ref{Theo1} below, define the functions\footnote{$C$ is a contour running from $\iy
 e^{5i\pi/6}$ to $\iy
 e^{i\pi/6}$, such that $-i\tau$ lies above the contour.}
 \be
 A^{\pm}_r(u;\tau):=
  \int_C   e^{\frac{1}{3} ia^3+iau }
 \left(\mp ia-\tau\right)^{\pm r}
 \frac{da}{2\pi}
  \label{k-Airy0}\ee
and the standard
Airy function $A(u):=A^-_0(u;\tau)=A^+_0(u;\tau)$,  satisfying the differential equation
 $
A'' (x)=xA(x).$
Given these functions, the Airy and $r$-Airy kernels 
 are defined by (\ref{r-kernel}) below and will be used in Theorem \ref{Theo1}:
   \begin{equation}\label{r-kernel}\begin{split}
    K^{(0)}(u,v)&= \int_0^{\iy}
 dw A(w+u)A(w+v)=\frac{A(u)A'(v)-A'(u)A(v)}{u-v}
 \\
 K^{(r)}_\tau(u,v)&=\int_0^{\iy}
 dw A_r^-(w+u; \tau)A_r^+(w+v; \tau). 
 \end{split}\end{equation}
 We now state:

\begin{theorem} \label{Theo1}
Consider non-intersecting Brownian motions as above with
$r$ particles forced to a target point
$\rho_0\sqrt{n/2}>0$ at time $t=1$. For large $n$, the
average mean density of particles as a function of $t$
has its support on a region bounded to the right by the
curve $ \mathcal{C}$ (defined just below (\ref{Normal})). The
tangent line to $ \mathcal{C}$, passing through
$(\rho_0\sqrt{n/2},1)$ has its point of tangency at
$(y_0,t_0)$, given by (\ref{tangency}). Letting $n\rg
\iy$ and given an arbitrary point $(y,t)\in \CR$ with
$t\leq t_0$, a phase transition occurs at $t=t_0$, which
is conveniently expressed in terms of the new parameters
defined by\footnote{Remember from (\ref{rho0}), one has
$\rho_0=e^{-\tau_0}$.} $t=(1+e^{-2\sg})^{-1}$ and
$t_0=(1+e^{-2\tau_0})^{-1}$; one has the following limit:
%
%
   %
%
%
 \bea \lefteqn{\label{0.3}
   \qquad\lim_{n\rightarrow \iy}
\BP^{(0,\rho_0\sqrt{n/2})}_{Br}\left(\mbox{all}~
x_i\left( \frac{1}
 {1+e^{-2(\sigma+\frac{\tau}{n^{1/3}})}}\right) \in
  \frac{\sqrt{2n}+\frac{E^c}{\sqrt{2}n^{1/6}}   }
  {2\cosh(\sigma+\frac{\tau}{n^{1/3}})}
   \right) } \\
  %
 %
 \no\\
&=&\left\{\begin{array}{lll}
 \BP(\mathcal {A}(\tau)\cap E=\emptyset)=
 \det\left(I-K^{(0)}\right)_E
  &~~\mbox{for}\quad  0\leq t<t_0
  &\\
  & \quad\quad(\mbox{ i.e., }0 \leq\sigma<\tau_0) ,\\
 \\
 \BP(\mathcal {A}^{(r)}(\tau)\cap E=\emptyset)=
  \det\left(I-K_\tau^{(r)}\right)_E
  &\mbox{for}\quad t=t_0 \quad&\\
  & \qquad(\mbox{ i.e., }
  \sigma=\tau_0)
 . \end{array}\right. 
   \no%
   \eea
These Fredholm determinants are genuine probability distributions for each value of $\tau$.  Notice that for $0 \leq\sigma<\tau_0 $, the
limit above is independent of the time $\tau$ and the number of outliers $r$, unlike the case $\sigma=\tau_0$.

\end{theorem}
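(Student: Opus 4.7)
The plan is to identify the particle configuration at any fixed time $t\in(0,1)$ as a determinantal point process, write its correlation kernel as a double contour integral, and extract its $n\to\iy$ behavior by steepest descent. By the Karlin--McGregor theorem applied to the non-intersecting Brownian bridges starting from $0$, with $n-r$ of them conditioned to return to $0$ and $r$ conditioned to end at $a=\rho_0\sqrt{n/2}$, the particles at time $t$ form a biorthogonal ensemble whose correlation kernel $K_n^{(r)}(t;x,y)$ admits, following the scheme of Johansson and Baik--Ben Arous--P\'ech\'e, a representation of the schematic form
\be
K_n^{(r)}(t;x,y)=\frac{1}{(2\pi i)^2}
\int_{\Gamma_w}dw\int_{\Gamma_z}dz\,
\frac{e^{n\Phi_t(z,x)-n\Phi_t(w,y)}}{w-z}
\left(\frac{w-\rho_0/\sqrt 2}{z-\rho_0/\sqrt 2}\right)^{\!\!r},
\ee
where $\Phi_t$ is the quadratic--rational phase inherited from the Gaussian transition densities, and whose $z$-derivative has right-edge saddle at $z_\ast(t)=\sqrt{(1-t)/(2t)}$. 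Deriving this representation is routine once the two target groups are encoded through the biorthogonal structure.

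The saddle $z_\ast(t)$ is the mechanism producing the Airy kernel at the edge curve $\CR$, while the novelty is the pole at $\rho_0/\sqrt 2$ introduced by the outliers. In the variable $\sg$ defined by $t=(1+e^{-2\sg})^{-1}$ one checks that $z_\ast(t)=e^{-\sg}/\sqrt 2$, so pole and saddle coincide exactly when $e^{-\sg}=\rho_0=e^{-\tau_0}$, that is at the critical time $t=t_0$. Under the Airy rescaling $\sg\mapsto\sg+\tau n^{-1/3}$ together with $x=(\sqrt{2n}+u/(\sqrt 2 n^{1/6}))/(2\cosh(\sg+\tau n^{-1/3}))$, a Taylor expansion of $n\Phi_t(z,x)$ around $z_\ast$ along the steepest-descent direction $z=z_\ast+ia/n^{1/3}$ produces the Airy cubic $\tfrac{1}{3}ia^3+iau$ to leading order, exactly as in the standard analysis of non-intersecting Brownian motions at the spectral edge.

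For $\sg<\tau_0$, the pole $\rho_0/\sqrt 2$ sits a fixed positive distance from $z_\ast$, so the outlier factor remains uniformly bounded on the descent contour and converges to $1$ on compact sets in the local variable $a$; the steepest-descent argument then reduces to the classical one and reproduces $K^{(0)}$. For $\sg=\tau_0$ pole and saddle collide, and on the rescaled contour $z=z_\ast+ia/n^{1/3}$ the outlier factor produces exactly the integrand pieces $(\mp ia-\tau)^{\pm r}$ defining $A_r^\pm$, with the shift $\tau$ arising from the small time displacement $\sg+\tau n^{-1/3}-\tau_0$. The footnote convention that $-i\tau$ lies above the contour $C$ corresponds precisely to the fact that, on the original $\Gamma_z$, the pole lies inside the contour. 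Using $(w-z)^{-1}=\int_0^\iy e^{-s(w-z)}ds$ (valid once the contours are oriented so that $\Re(w-z)>0$) and factorizing the resulting triple integral in the variable $s$ then yields the convolution $\int_0^\iy A_r^-(u+s;\tau)A_r^+(v+s;\tau)\,ds$, which is precisely the kernel $K_\tau^{(r)}$ of \eqref{r-kernel}.

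The main technical obstacle is not the formal identification of the limiting kernels but the upgrade of pointwise kernel convergence to trace-norm convergence on $L^2(E)$, which is what is needed for convergence of the Fredholm determinants. This requires uniform Gaussian-type decay in $u,v$, extracted by careful contour engineering in a fixed neighborhood of $z_\ast$ together with exponential decay of the phase along the tails, of the kind already developed for the Airy and Pearcey processes. Once this is in hand the two identifications follow, and since each finite-$n$ gap probability is a genuine probability in $[0,1]$, the limiting Fredholm determinants inherit that property for every $\tau$.
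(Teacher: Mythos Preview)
Your proposal follows essentially the same route as the paper's proof: express the gap probability via Karlin--McGregor as a Fredholm determinant of a double contour integral kernel with an outlier factor of the form $((w-a)/(z-a))^r$, rescale so that the phase $F(z)=z^2/2-2z+\log z$ has its cubic saddle at $\tilde z=1$, use $1/(w-z)=\int_0^\iy e^{-s(w-z)}ds$ to factorize, and then distinguish the cases according to whether the pole (at $\alpha e^{\tau/n^{1/3}}$ in the paper's normalization, with $\alpha=\rho_0/\rho$) merges with the saddle or stays at fixed distance. Your identification of the collision point $\sigma=\tau_0$ and of the mechanism producing the factors $(\mp ia-\tau)^{\pm r}$ is exactly what the paper does; the paper likewise defers the rigorous steepest-descent bounds to \cite{BBP,Peche}.

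One small point: your last sentence, that the limiting Fredholm determinants are ``genuine probability distributions'' because the finite-$n$ gap probabilities lie in $[0,1]$, only gives that the limits lie in $[0,1]$. To conclude that $x\mapsto\det(I-K_\tau^{(r)})_{(x,\iy)}$ is an honest distribution function you also need $\lim_{x\to+\iy}=1$ and $\lim_{x\to-\iy}=0$, which does not follow from the pointwise limit alone; the paper invokes Baik \cite{Baik} for this.
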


\begin{figure}
\includegraphics[width=95mm,height=90mm]{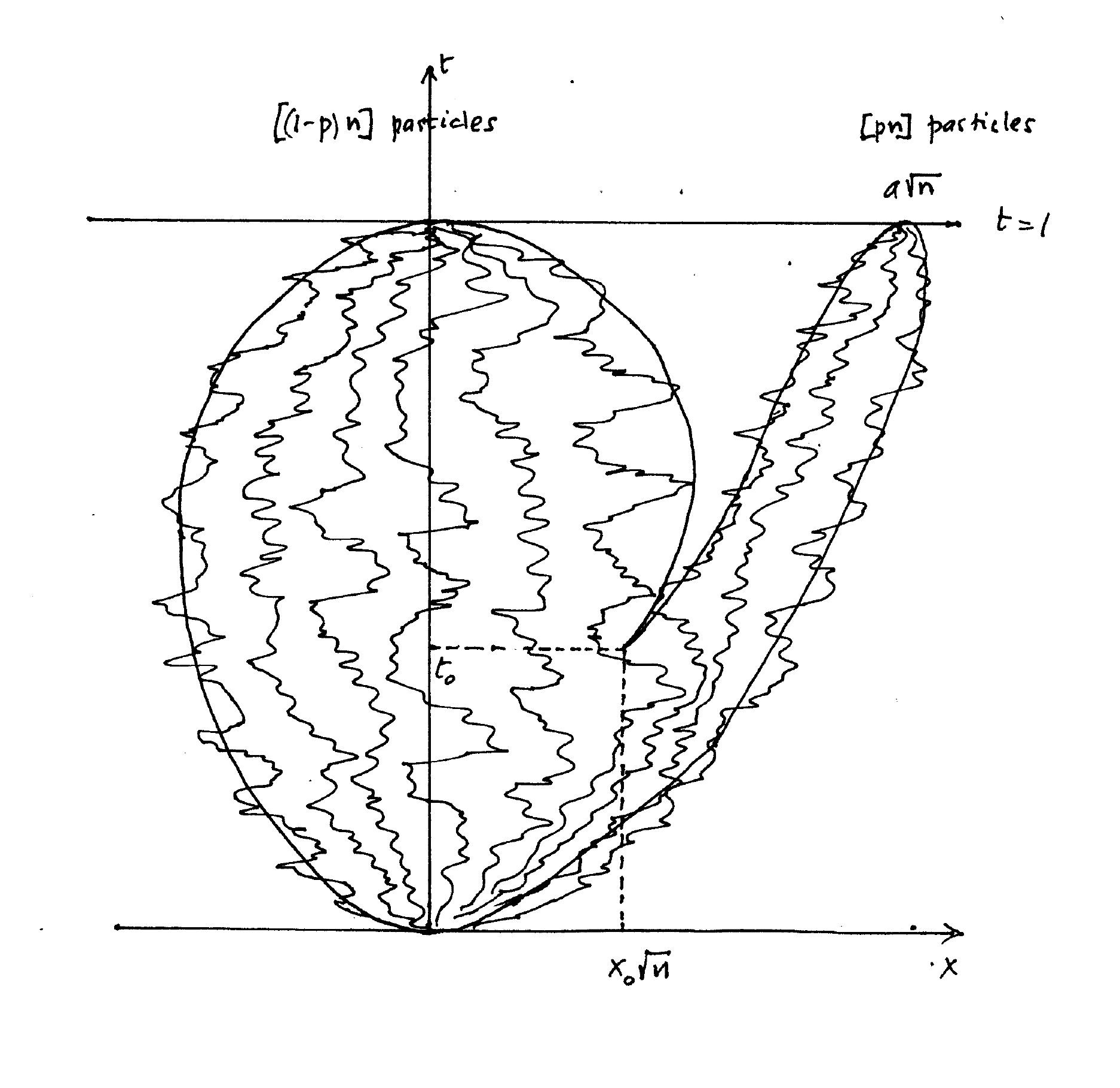}

\vspace*{-1cm}

\caption{Pearcey process}\label{fig3}
\end{figure}


  \bigbreak

As is clear from Figure 0.2 and for $t>t_0$, the $r$
outliers separate from the bulk as a group; thus after
an appropriate shift to take into account their
displacement from the curve $\mathcal{C}$ and using a
different scaling, they behave probabilistically like
the $r$ eigenvalues of an $r \times r$ matrix in the GUE
ensemble.

  The $r$-Airy process can also be viewed as an {\bf interpolation}
  between the Airy and Pearcey processes. The {\bf Pearcey process}
  \cite{TW-Pearcey, Okounkov, AvM-Pearcey,AvM-Pearcey-joint} is
  defined as the limit of non-intersecting Brownian motions (for large
  $n$), all leaving from $0$ at time $t=0$, with $(1-p)n$ paths forced
  to end up at $0$ and $pn$ paths forced to end up at $a\sqrt{n/2}$ at
  time $t=1$; see Figure 0.3. The Pearcey process then
describes this Brownian cloud of particles (for $n\rg
\iy$) near the time $t$ of bifurcation, where the
support of the average mean density goes from one
interval into two intervals, with stretched space and
time. Then the boundary of the support of the average
mean density of particles in $(y,t)$-space has a cusp.
In section \ref{inter} it will be shown sketchily how
the location of the cusp, when the proportion $p$ of
particles tends to $0$ as $r/n$, tends to the precise
place $(y_0,t_0)$ where the $r$-Airy process occurs.

\begin{remark} To simplify the notation, one often writes
 \be
\BP(\sup\mathcal {A}^{(r)}(\tau)\leq x)
: =\BP( \mathcal{A}^{(r)}(\tau)
 \cap (x,\iy)=\emptyset)\label{notation}
  .\ee
\end{remark}





\begin{remark} The joint probabilities for the $r$-Airy
processes for a finite number of times can be defined in a similar way and lead to a
matrix Fredholm determinant, to be discussed in a later
paper.
\end{remark}

This phenomenon is closely related to statistical work
by Baik-Ben Arous-P\'ech\'e \cite{BBP},
Baik \cite{Baik} and P\'ech\'e \cite{Peche}.
 ~Indeed, consider a (complex) Gaussian population
$\overrightarrow{y}\in \BC^N$, with covariance matrix
$\Sigma$. Given $M$ samples
$\overrightarrow{y_1},\ldots,\overrightarrow{y_M}$, the
(centered) sample covariance matrix $S:=\frac{1}{M}
X\bar X^{\top}$, where
$$
 X=\left(
 \overrightarrow{y_1}-\frac{1}{M} \sum_1^M
 \overrightarrow{y_i},\ldots,
 \overrightarrow{y_M}-\frac{1}{M} \sum_1^M
 \overrightarrow{y_i}\right)
 ,$$
 is a positive definite matrix and is an estimator of the true covariance matrix
$\Sigma$. One may test the statistical hypothesis that
$\Sigma=I$ or that $\Sigma$ has all eigenvalues $=1$,
except for a few outliers. When all the eigenvalues of
$\Sigma$ are $=1$, then the limit distribution of the
largest eigenvalue of the sample covariance matrix $S$,
for $N$ and $M$ tending to $\iy$ in the same way, is
given by the Tracy-Widom distribution. Then
Baik-BenArous-P\'ech\'e \cite{BBP} noticed that, this is
still so, if the eigenvalues of $\Sigma$ contain some
outliers, which are not too large. There is a critical
point at which and beyond which the largest eigenvalue
of the sample covariance matrix $S$ will be different
from the Tracy-Widom distribution. At this point of
phase transition Baik-BenArous-P\'ech\'e \cite{BBP} have
found an Airy-type distribution in $x$, which is given
by the Fredholm determinant $$ \det
{(I-K_\tau^{(r)})}_{_{(x,\iy)}}\Bigr|_{\tau=0} ,$$ where
$r$ denotes the number of eigenvalues of $\Sigma$ that
are equal to $1+\gamma^{-1}$, while all the others are
$=1$; $\gamma$ is such that $M/N=\gamma^2$ for $M$ and
$N$ very large. This distribution was further
generalized in \cite{BBP} to the case where $\tau \neq
0$ in the kernel $K_\tau^{(r)}$. Baik proved in
\cite{Baik} that the Fredholm determinant of
$K_\tau^{(r)}$ is a genuine probability distribution. In
the statistical problem above, the covariance matrix $S$
is positive definite and therefore its eigenvalues
satisfy Laguerre-type distributions; this idea was
extended to GUE-type distributions by S. P\'ech\'e
\cite{Peche}. In the present paper one finds that the
shift $\tau$ appearing in the kernel $K_\tau^{(r)}$ is
precisely the rescaled time of the non-intersecting
Brownian motion model! The arguments will appear in
sections \ref{sect1} and \ref{sect2}.


This paper also shows that the probability of the
$r$-Airy process or, what is the same, the Fredholm
determinant $\det\left(I-K_\tau^{(r)}\right)_{(x,\iy)}$
satisfies a non-linear PDE in $x$ and $\tau$, depending
on the number $r$ of outliers, as established in section
\ref{sect6}:


\begin{theorem}\label{Theo2}
 The logarithm of the probability
 $$Q(\tau,x):=\log\BP(\sup\mathcal {A}^{(r)}(\tau)\leq x)
 = \log\det {(I-K_\tau^{(r)})}_{_{(x,\iy)}}$$
satisfies the following non-linear PDE\footnote{The
Wronskian $\{f,g\}_x$ with regard to the variable $x$ is
defined as $f'g-fg'$.}, with both, the function
$Q(\tau,x)$ and the PDE, being invariant\footnote{The
invariance under the involution is obvious for the
equation; for the function $Q(\tau,x)$, see Lemma 7.5.}
under the involution $(\tau,x,r)\rg (-\tau,x,-r)$,
\be \label{PDE0}\ee
   \bean
 &&\left\{\frac{\pl^3 Q}{\pl \tau\pl x^2}
 ,
  \displaystyle{\left[
   \begin{array}{l} \left( r-{\frac {\partial ^{2}Q}{\partial \tau\partial x}} \right) ^{2}
\left( {\frac {\partial ^{3}Q}{\partial {x}^{3}}}
 \right) +
\left( r-{\frac {\partial ^{2}Q}{\partial \tau\partial x}} \right)
\frac{\pl}{\pl \tau}\left(2\tau\frac{\pl^2Q}{\pl x\pl \tau}+\frac{\pl^2Q}{\pl \tau^2}\right)
\\
  +  {\frac {\partial ^{3}Q}{\partial \tau\partial x^2}}    \left( 2r{\frac {\partial ^{2}Q}{
\partial {x}^{2}}} +2{\frac {\partial Q}
{\partial \tau}}  -xr\right)
+\frac{\pl}{\pl x}\left(\frac{1}{4}(\frac {\partial ^{2}Q}{\partial {\tau}^{2}})^2+\tau
 \frac {\partial ^{2}Q}{\partial {\tau}^{2}} \frac {\partial ^{2}Q}{\partial {\tau}\partial x} \right)
 \end{array}\right]}
  \right\}_x
  \\\no\\
\lefteqn{~~~~~~~ -\frac{1}{2}\left( {\frac {\pl
^{3}Q}{\pl \tau\pl x^2}} \right) ^{2}\left({\frac
{\partial ^{3}Q}{\partial {\tau}^{ 3}}}
 -4\frac{\pl^2Q}{\pl \tau\pl x}  \frac {\pl ^{3}Q}{\pl
x^3}\right)=0,} 
\no\eean
with ``initial condition", given by the log of the
Tracy-Widom distribution,
 \be
 Q_0(x):=\lim_{\tau\rg -\iy} Q(\tau,x):=
 \log\BP(\sup\mathcal {A}(\tau)\leq x)
  =-\int_x^{\iy}(\al-x)g^2(\al)d\al, \label{ic}\ee
where
  $g(\al)$ is the Hastings-MacLeod solution of Painlev\'e
  II,
  \be
  g''=\al g+2g^3,~~~~\mbox{with}~~~ g(\al)\cong
  \frac{
  e^{-\frac{2}{3}  \al^{\frac{3}{2}}}}{2\sqrt \pi \al^{1/4}}
\mbox{~~~for~~~}\al\nearrow \infty .\label{Painleve}\ee

\end{theorem}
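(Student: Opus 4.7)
The plan is to follow the Adler--van Moerbeke strategy: realize the Fredholm determinant as a tau-function of an integrable hierarchy, derive Virasoro-type constraints on it, and combine these with the bilinear (Hirota) identities of the hierarchy to produce a closed nonlinear PDE in $x$ and $\tau$. The starting observation is the pair of \emph{heat--Airy equations}
\[
(\partial_u^2-u-\partial_\tau)A_r^+(u;\tau)=0,\qquad(\partial_u^2-u+\partial_\tau)A_r^-(u;\tau)=0,
\]
obtained by integration by parts in the contour integrals of (\ref{k-Airy0}). These yield commutation relations between $\partial_\tau$ and $\partial_u$ acting on the kernel $K_\tau^{(r)}$ and supply the first batch of identities linking $\partial_\tau Q$ to $\partial_x Q$ and their higher mixed derivatives.

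To produce enough relations for a closed PDE, I would introduce auxiliary deformation times $\vec t=(t_1,t_2,\dots)$ and $\vec s=(s_1,s_2,\dots)$ by replacing $e^{\frac{1}{3}ia^3+iau}$ with $e^{\frac{1}{3}ia^3+iau+\sum_k t_k(ia)^k}$ in $A_r^+$, and analogously in $A_r^-$. The deformed Fredholm determinant then becomes a tau-function of the 2-Toda hierarchy with lattice index $r$, the outlier weights $(\mp ia-\tau)^{\pm r}$ entering as $\tau$-dependent ingredients of this framework. Virasoro constraints for small $k$ arise from three sources: (i) the cubic phase $\frac{1}{3}ia^3$, giving an Airy-type shift; (ii) the outlier weights, contributing first-order $\partial_\tau$ terms proportional to $r$; and (iii) the integration endpoint $x$, giving multiplication operators. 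The constraints $L_{-1}$ and $L_0$ in particular express the $\partial_x$ and $\partial_\tau$ actions in terms of finitely many $\partial_{t_k}$ and $\partial_{s_k}$ flows.

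The decisive step is to combine the first few Hirota bilinear identities of the 2-Toda hierarchy (at $\vec t=\vec s=0$) with the Virasoro constraints to eliminate all auxiliary flow derivatives, leaving a closed equation in $(x,\tau,r)$ for $Q$. The invariance of the PDE under $(\tau,x,r)\to(-\tau,x,-r)$ reflects the symmetry $A^+\leftrightarrow A^-$ (equivalently $\vec t\leftrightarrow\vec s$) of the construction and must hold automatically, cutting the search space in half and guiding the identification of the correct combination. The initial condition (\ref{ic}) follows separately: as $\tau\to-\infty$, the leading factors $(\mp ia-\tau)^{\pm r}\sim(-\tau)^{\pm r}$ cancel in the product $A_r^-(w+u;\tau)A_r^+(w+v;\tau)$, uniformly on compact sets, so $K_\tau^{(r)}\to K^{(0)}$ and one recovers the Tracy--Widom formula for $Q_0$. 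The hard part will be the purely algebraic bookkeeping in the elimination step: organizing the Virasoro and Hirota identities so that exactly the third-order PDE (\ref{PDE0}), with its specific outer Wronskian bracket $\{\cdot,\cdot\}_x$, emerges. Given the order of the equation, $L_{-1},L_0,L_1$ together with the first two or three Hirota equations should suffice, but matching the precise coefficients is a nontrivial, though in-principle-finite, computation.
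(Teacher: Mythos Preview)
Your plan is a plausible heuristic, but it departs substantially from what the paper does, and the crucial step in your outline is not established. The paper does \emph{not} work directly with the limiting kernel $K^{(r)}_\tau$ and does not place the Fredholm determinant itself into an integrable hierarchy. Instead, it works at the \emph{finite-$n$} level with the Gaussian matrix model with external source $\BP_n(\alpha;E)$, which---after adding auxiliary flow variables $t,s,u,\beta$---is shown in Section~\ref{sect:KP} to be a tau-function of the \emph{3-component KP} hierarchy (not 2-Toda). The Virasoro constraints (Section~\ref{sect:Vir}) are derived for this finite-$n$ integral by elementary integration by parts in the matrix-model variables, and combined with the bilinear KP identities (Corollary~\ref{bilinear-identity}) they yield the explicit fourth-order PDE (\ref{C017a}) of Theorem~\ref{Theo:PDE} for $\log\BP_n(\alpha;E)$. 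Only then does one pass to the limit: under the rescaling $\alpha=\rho z^{-3}e^{\tau z^2}$, $b=2z^{-3}+xz$, $n=z^{-6}$, the PDE (\ref{C017a}) is expanded in powers of $z$; at $\rho=1$ the leading coefficient is the expression $\mathcal E(\tilde Q)$ of (\ref{PDE6}), and the convergence estimate of Corollary~\ref{cor:estimate2} forces $\mathcal E(Q)=0$, which is exactly (\ref{PDE0}).

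The gap in your proposal is the assertion that the time-deformed Fredholm determinant of $K_\tau^{(r)}$ ``becomes a tau-function of the 2-Toda hierarchy with lattice index $r$.'' This is the whole ballgame, and you offer no argument for it; the heat--Airy equations you write down give differential relations on the kernel but do not by themselves produce a bilinear Hirota structure on $\det(I-K)$. The paper sidesteps this entirely by exploiting the well-known tau-function structure of the \emph{finite} matrix integral and then taking a scaling limit of the resulting PDE. If you want to pursue your direct route, you would need to prove an Adler--Shiota--van Moerbeke--type theorem identifying your deformed Fredholm determinant with a specific tau-function, which is a separate and substantial piece of work; absent that, the elimination step you describe cannot be carried out.
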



\begin{remark} Obviously, the PDE (\ref{PDE0})
has the following structure
  \be
 \frac{\pl}{\pl
x}\left(\frac{\ldots}{\frac{\pl^{3}Q}{\pl \tau
  \pl x^2}}\right)
=  \frac{\partial^{3} Q}{\partial \tau^3 } -4
\frac{\partial^{2} Q}{\partial \tau
 \partial x} \frac{\partial^{3} Q}{ \partial x^3}
~.%
\label{structure}\ee 
\end{remark}


\begin{remark} The following simple recipe gives the PDE for $\BP(\mathcal {A}^{(r)}(\tau)\cap
E=\emptyset)$ for a general set $E=\cup_1^\ell
[x_{2i-1},x_{2i}]$, replacing the PDE (\ref{PDE0}) for
$E=(x,\iy)$; indeed, perform the replacements  
  $$\frac{\pl}{\pl x}\mapsto\sum_i \frac{\pl}{\pl
x_i}~~\mbox{ and }~~ x\left(\frac{\pl}{\pl
x}\right)^k\mapsto\left(\sum_i x_i\frac{\pl }{\pl
x_i}\right)\left(\sum_i  \frac{\pl}{\pl
x_i}\right)^{k-1},$$ with the understanding that
differentiation must always be pulled to the right.

\bigbreak

Although the average mean density of the particles is insensitive to the presence of outliers,
the presence of the $r$ particles forced to reach the
target $a>0$ at $t=1$ is already felt, when $t\rg 0$ in
the $t$-scale; that is when $\tau\rg -\iy$ in the
$\tau$-scale. The net effect is that it pulls the edge
of the cloud of particles in the average towards the right to first
order like $|r/\tau|$; i.e., the more so when $r$ gets
large; this edge then behaves like the Airy process
shifted in space, up to and including order $1/\tau^4$.The
PDE in Theorem \ref{Theo2} is a convenient instrument to
extract the remote past asymptotics, as shown in
Section~\ref{sect7} and stated in the theorem below.
\end{remark}

\begin{theorem}\label{Theo3}

The PDE with the initial condition $Q_0(x)$ as in
(\ref{ic}) admits the asymptotic solution, for $\tau\rg
-\iy$, of the form \bean
 Q(\tau,x)&=&\sum_0^{\iy}\frac{Q_i(x)}{\tau^i}
   =
   Q_0\left((x+\frac{r}{\tau})(1+\frac{r}{3\tau^3})+
    \frac{r^2}{4\tau^4}\right)
   + \frac{r}{5\tau^5}{\mathcal{F}}_5
   +O(\frac{1}{\tau^6}).\eean
For the probability itself, one has\footnote{with 
 $ \mathcal{F}_5:=
  x^2Q_0'+4xQ_0+Q_0^{\prime 2}
   +10\int_x^{\iy}Q_0
    -
   6\int_x^{\iy}dy\int_y^{\iy}du Q_0^{\prime\prime 2}
   .$ }
   \bean
   \lefteqn{\BP(\sup\mathcal {A}^{(r)}(\tau)\leq x)}
  \\
   &=&
    \BP\left(\sup\mathcal {A}(\tau)\leq (x+\frac{r}{\tau})(1+\frac{r}{3\tau^3})+
    \frac{r^2}{4\tau^4}\right)
    \left(1+\frac{r}{5\tau^5}\mathcal{F}_5
     +O(\frac{1}{\tau^6})\right)
     .\eean
     For $\tau\rg -\iy$, the mean and variance of the right edge of the process
      behave as
      \bean
      {\mathbb E}
       (\sup \mathcal {A}^{(r)}(\tau))&=&
 {\mathbb E}(\sup\mathcal {A}^{(0)}(\tau))\left(1-\frac{r}{3\tau^3}
 \right)
 -\frac{r}{\tau}-\frac{r^2}{4\tau^4} +O(\frac{1}{\tau^5})
      \\
       \mbox{\em var}(\sup \mathcal {A}^{(r)}(\tau))
       &=&
 \mbox{\em var}(\sup\mathcal {A}^{(0)}(\tau))
 \left(1-\frac{2r}{3\tau^3}\right)
  +O(\frac{1}{\tau^5}).\eean

\end{theorem}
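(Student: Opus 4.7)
The plan is to insert the formal asymptotic ansatz $Q(\tau,x) = \sum_{i\geq 0} Q_i(x)/\tau^i$ into the PDE~(\ref{PDE0}) and match powers of $1/\tau$. Since $Q_0(x)$ is $\tau$-independent, every $\tau$-derivative gains at least one factor $1/\tau$, so the mixed derivatives decouple nicely: $\partial^3 Q/\partial\tau\partial x^2 = -Q_1''(x)/\tau^2 + O(1/\tau^3)$, while the ``right-hand side'' $\partial_\tau^3 Q - 4\partial_\tau\partial_x Q\cdot\partial_x^3 Q$ of Remark~0.4 starts at $-(4/\tau^2)Q_1'(x)\,Q_0'''(x)$. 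Balancing the explicit $r$-dependent coefficients $(r-\partial^2 Q/\partial\tau\partial x)^2$, $2rQ_{xx}$ and $-xr$ appearing in the bracket on the left against this leading right-hand side produces a linear first-order ODE whose unique solution decaying at $+\infty$ is $Q_1(x)=rQ_0'(x)$. This is precisely the $\tau^{-1}$-coefficient of the Taylor expansion of $Q_0(x+r/\tau)$, so the ``shift'' ansatz is forced from the very first order.

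\textbf{Shift verification and $\mathcal{F}_5$.} Rather than solving the ODEs at orders $i=2,3,4$ individually, I would substitute the candidate
\begin{equation*}
\widetilde{Q}(\tau,x):=Q_0\!\left((x+r/\tau)(1+r/(3\tau^3))+r^2/(4\tau^4)\right)
\end{equation*}
directly into the PDE and verify that it solves it modulo $O(1/\tau^5)$, reducing the check to a polynomial identity in derivatives of $Q_0$ (which can in turn be simplified using $Q_0''=-g^2$ and $g''=xg+2g^3$). Because the linear recursive ODEs for $Q_i$ have a unique solution decaying at $+\infty$, this forces $Q_i$ to coincide with the $\tau^{-i}$-coefficient of $\widetilde{Q}$ for $i\leq 4$. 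At order $1/\tau^5$, $\widetilde{Q}$ ceases to solve the PDE and leaves an explicit residual, which drives a linear ODE for the deviation $Q_5-[\widetilde{Q}]_{\tau^{-5}}$. Integrating this ODE twice (producing the two iterated integrals of $Q_0''^2$) and imposing decay at $+\infty$ yields the formula $\mathcal{F}_5 = x^2 Q_0'+4xQ_0+Q_0'^2+10\int_x^\infty Q_0 - 6\int_x^\infty dy\int_y^\infty du\,Q_0''^2$ of the footnote.

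\textbf{Probability, mean, variance.} Exponentiating the expansion of $Q$ and truncating gives the claimed formula for $\mathbb{P}(\sup\mathcal{A}^{(r)}(\tau)\leq x)$. For the moments I would use the tail formulas $\mathbb{E}[X]=\int_0^\infty (1-F_r)\,dx - \int_{-\infty}^0 F_r\,dx$ and $\mathrm{Var}[X]=\int_{\mathbb{R}} 2x(1-F_r)\,dx - (\mathbb{E}X)^2$, with $F_r(x)=e^{Q(\tau,x)}$. Writing $F_r(x)=F_0(y)\bigl(1+r\mathcal{F}_5/(5\tau^5)+O(\tau^{-6})\bigr)$ with $y=(x+r/\tau)(1+r/(3\tau^3))+r^2/(4\tau^4)$ and changing variable to $y$ (Jacobian $dx=dy/(1+r/(3\tau^3))$), the leading piece becomes the corresponding moment of $\sup\mathcal{A}(\tau)$, scaled by $(1+r/(3\tau^3))^{-1}$ and shifted by the $r/\tau$-terms. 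Expanding gives the mean asymptotic; the variance inherits the scaling factor $(1+r/(3\tau^3))^{-2}=1-2r/(3\tau^3)+O(\tau^{-6})$.

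\textbf{Main obstacle.} The chief difficulty is the combinatorial bookkeeping at orders $i=2,3,4$: each monomial in the PDE such as $(\partial^2_{\tau x}Q)^2\partial^3_xQ$, $\partial_x(\tau\partial^2_\tau Q\cdot\partial^2_{\tau x}Q)$, or $\partial_\tau(2\tau\partial^2_{\tau x}Q+\partial^2_\tau Q)$ contributes to several powers of $1/\tau$ simultaneously. Verifying the shift ansatz $\widetilde{Q}$ directly, as proposed, sidesteps most of this by converting the problem into an identity that is \emph{polynomial} in $r$ and in the derivatives of $Q_0$, which can be organized order by order (and, if needed, checked with a computer algebra system). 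Only the single residual ODE at order $1/\tau^5$ must then be integrated by hand, making the appearance of the specific combination $\mathcal{F}_5$ transparent.
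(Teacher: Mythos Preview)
Your overall strategy --- insert the series into the PDE, match powers, then recognize the Taylor shift --- is the same as the paper's. But there is a genuine gap in your uniqueness argument, and it is exactly the point where the paper does real work.

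At each order the equation you obtain is not a first-order ODE with a unique decaying solution. Already at $i=1$ the PDE gives (after cancelling an overall factor) the Wronskian relation $\{Q_1'',Q_0'''\}_x=0$, whose general solution is $Q_1=c\,Q_0'+\alpha x+\beta$. Decay at $+\infty$ kills $\alpha,\beta$ but \emph{not} $c$, because $Q_0'(x)=\int_x^\infty g^2\to 0$ itself decays. The same free constant $c_n$ appears at every order (and again at the $\mathcal F_5$ step, where the paper explicitly records an undetermined $c_5$). So ``the unique solution decaying at $+\infty$'' is not available, and your verification-of-the-shift argument only shows that $\widetilde Q$ is \emph{a} solution of the recursion through order $4$, not \emph{the} solution.

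The paper supplies the missing input from the kernel: it expands $K^{(r)}_\tau$ in $1/\tau$ (Lemma~2.2), proves $Q_n(x)=-\operatorname{Tr}K_n^{(r)}+O(A^4)$ as $x\to\infty$ (Lemma~7.5), and then matches these large-$x$ asymptotics against the PDE solution to pin down each $c_n$. This is where the identities $Q_1=rQ_0'$, $Q_2=\tfrac{r^2}{2}Q_0''$, etc., are actually forced. Without invoking the kernel expansion (or some equivalent independent information), the PDE alone underdetermines the series, and your argument as written does not close.

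(The moment computation at the end is fine in spirit; the paper does essentially the same change of variable $x\mapsto y$.)
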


The probability (\ref{Brownian target}) is related, via
a change of variables, to a Gaussian matrix model with
external potential, \bean\BP_{n} (\alpha , \tilde{E})
 &=& \frac{1}{Z_n}\int_{\HR_n(\tilde E)}
 dM e^{-\frac{1}{2}\Tr (M^2-2AM)}
 ,\eean
where $\HR_n(\tilde E)$ denotes the Hermitian matrices
with eigenvalues in $\tilde E\subset {\mathbb R}$ and with a
diagonal matrix \be
 A:= \left(\begin{array}{cccccc}
   \al\\
   &\ddots& & & &{\bf O}\\
   & &\al\\
   & & &0\\
   &{\bf O}& & &\ddots\\
   & & & & &0
\end{array}\right)\begin{array}{l}
\updownarrow r\\
\\
\\
\\
\updownarrow n-r
\end{array}.
  \label{diagmatrix}\ee
  The relationship between the Karlin-McGregor
  non-intersecting Brownian motions and matrix models
   has been developped
  by Johansson \cite{Johansson1} and, for the Gaussian
  matrix model with external potential, by
  Aptekarev-Bleher-Kuijlaars \cite{AptBleKui}.
  The latter model has come up in many other situations;
  among others, see
  \cite{Pastur,Brezin2,Brezin3,Brezin4,Zinn1,Zinn2,Johansson2,
  BleKui1, TW-Pearcey, AvM-Pearcey}.
  Following the method of Adler-van Moerbeke \cite{AvM-Pearcey}, using the multi-component KP hierarchy (section \ref{sect:KP}) and the
  Virasoro constraints (section \ref{sect:Vir}),
  it is shown in section
  \ref{sect:PDE} that this matrix model satisfies a
  non-linear PDE in $\al$ and the boundary points of $\tilde E$.
  The PDE of Theorem \ref{Theo2} then
  follows from making an asymptotic analysis on that
  PDE. The asymptotic behavior of the $r$-Airy process
  for $\tau\rg -\iy$ follows from solving the
  partial differential equation near $\tau=-\iy$ for the
  Airy-kernel initial condition; this process introduces
  some free constants, which then can be determined by the
  asymptotic properties of the $r$-Airy kernel for $x \rg \iy$. 
   It is an interesting question whether these results
  can be deduced via the Riemann-Hilbert methods, in the
  style of \cite{Deift}.



\section{A constrained Brownian motion with a few
outliers}\label{sect1}

The purpose of this section is to motivate the scaling
limit (\ref{r-AiryP}) leading to the definition of the
$r$-Airy process. The Airy process was originally
defined as an edge scaling limit of Dyson's
non-intersecting Brownian motions, in the same way that
the Tracy-Widom distribution was obtained as an edge
scaling limit of GUE. The non-intersecting Brownian
motions (\ref{Brownian target}) with target point $a=0$
can be transformed to the Dyson Brownian motion, as will
be explained in this section. This transformation will
be used in the case of a $a\neq 0$ target. In the course
of doing this, one must consider a Gaussian matrix model
with external potential. 
\subsection{Dyson Brownian motion} 
The Dyson
process (\cite{Dyson}) describes the motion of the eigenvalues
$\lb_i(t')$ of an $n\times n$ Hermitian random matrix $B(t')$ whose
real and imaginary entries perform independent
Ornstein-Uhlenbeck processes, given that the initial
distribution is given by invariant measure for the
process, namely
 $$
  Z^{-1}\int_{\mathcal {H}_n(E')} dB e^{-\Tr B^2}
  .$$
Then the process is stationary and its probability
distribution at any time is given by
\be\BP_{Dy}(\mbox{all}~\lb_i(t')\in E')=
 Z^{-1}\int_{\mathcal {H}_n(E')} dB e^{-\Tr B^2}=
 Z^{-1}\int_{\mathcal {H}_n(\sqrt{2}E')} dM e^{-\frac{1}{2}\Tr M^2}
 .\label{Dyson1}\ee
The probability for two times $0<t_1'<t_2'$ is then
given by (set $c'=e^{-(t'_2-t'_1)}$)
%
%
 \bea
\lefteqn{\BP_{Dy}(\mbox{all~}\lb_i(t'_1)\in E'_1,
       \mbox{~all~}\lb_i(t'_2)\in E'_2)} 
\no\\ \no\\
&=& P(\mbox{all~}(B(t'_1)
 \mbox{-eigenvalues})\in E'_1,\mbox{~all~}(B(t'_2)
  \mbox{-eigenvalues})\in E'_2) 
\no\\ \no\\
&=&  \int\!\!\!\int_{\mbox{\scriptsize{all~}}B _1
-\mbox{\scriptsize{eigenvalues~}}\in
E'_1\atop{\mbox{\scriptsize{all~}}B _2
 -\mbox{\scriptsize{eigenvalues~}}\in E'_2}}
 Z^{-1}\frac{dB_1dB_2}{(1-c'^2)^{n^2/2}}
e^{-\frac{1}{1-c^{\prime 2}}\Tr(B_1^2+B_2^2-2c'B_1B_2)}\no\\
\no\\
&=&  \int\!\!\!\int_{
 M_1\in
\mathcal {H}_n(\frac{\sqrt{2}E'_1}{\sqrt{1-c'^2}}) \atop{
M_2 \in \mathcal {H}_n (\frac{\sqrt{2}E'_2}{ \sqrt{1-c'^2}}}
)}
Z^{'-1}dM_1dM_2~e^{-\frac{1}{2}
   \Tr(M_1^2+M_2^2-2c'M_1M_2)}
\label{Dyson2}\eea 
%
%
\subsection{Constrained Brownian motion with target
$a>0$} As in the introduction, consider the $n$
non-intersecting \mbox{Brownian} particles on ${\mathbb R}$, all
starting from the origin at time $t=0$, where among
those paths, $1 \leq r \leq n$ are forced to end up at
the target $a > 0$, while the $(n-r)$ other paths return
to the position $x=0$ at time $t=1$. Remember
$\BP_{Br}^{0a}$ denotes the transition probability. Then
the probability that all the particles belong to some
window $E \subset {\mathbb R}$ at a given time $0 < t < 1$, can
be expressed in terms of a Gaussian matrix model with an
external potential, using the Karlin-McGregor formula
\cite{Karlin} for non-intersecting Brownian motions (see
\cite{Grabiner, Johansson1, AptBleKui, AvM-Pearcey}),
itself involving the Brownian transition probability
(\ref{Normal}),
namely:
%
%
%
%
\bean    
{ \BP_{Br}^{0 a} \left( \mbox{all
$x_j(t)\in E$} \right)
 }
 &:=&
\BP \left(\begin{tabular}{c|c}
& all $x_j(0) =0$\\
all $x_j(t)\in E$ &
 $r$ right paths end up at $a$ at $t=1$\\
  &
$(n-r)$ paths end up at $0$ at $t=1$
\end{tabular}\right)
\\
 &=&\lim_{\begin{array}{c}
  \mbox{  all $~\gamma_i\rightarrow 0$}
  \\
 \mbox{$\delta_1, \ldots , \delta_{n-r}\rightarrow 0$}\\
  \mbox{$\delta_{n-r+1}, \ldots , \delta_{n}
  \rightarrow a$}
  \end{array}} \int_{E^n}\frac{1}{Z_{n}(\ga,\delta)}
  \prod_1^n dx_i\no\\
   \no\\ &&~~~\hspace*{0cm}
  \det\left(p(t;\gamma_i,x_j)\right)_{1\leq i,j\leq
n}\det(p(1-t;x_{i'},\delta_{j'}))_{1\leq i',j'\leq n}
  ,
 \\
  &=& \BP_{n}\left(a \sqrt{\frac{2t }{1-t}};
E\sqrt{\frac{2}{t(1-t)}}\right)
  \eean
where\footnote{$\Dt_r(x)$ denotes the Vandermonde
determinant, with regard to the $r$ variables
$x=(x_1,\ldots,x_{r})$ and $\Dt_n(x,y)$ denotes the
Vandermonde determinant, with regard to the $n$
variables $x=(x_1,\ldots,x_{r})$ and
$y=(y_1,\ldots,y_{n-r})$.}
 \bean\BP_{n} (\alpha ,
\tilde{E})
 &=& \frac{1}{Z_n}\int_{\HR_n(\tilde E)}
 dM e^{-\frac{1}{2}\Tr (M^2-2AM)}
 \\
  &=& \frac{1}{Z'_{n}}
  \int_{ \tilde E  
   ^{n}
   }
   \Dt_{n}(x ,y )
 \left(\Dt_r(x )  \prod^r_{i=1}
  e^{
 -\frac{x_i^{   2}}{2}  +  { \al x_i
 }}
 dx_i\right)
\left(\Dt_{n-r}(y  )  \prod^{n-r}_{i=1}e^{ -\frac{y_i^{
 2}}{2} }dy_i  \right) , \eean
with $A$ as in (\ref{diagmatrix}).
  %
%
In short, the conditioned Brownian motion is related to
a Gaussian matrix model with an external potential $A$
as follows:
%
%
\begin{equation}
\BP_{Br}^{0a}\left(\mbox{all $x_j(t)\in E$}\right)
 = \BP_{n} \bigl(\al, \tilde E 
\bigr) ~\mbox{  with  }~
 \tilde E=E  \sqrt{\frac{2}{t\,( 1-t)}} ,~~\al=a
\sqrt{\frac{2\,t}{1-t}} .\label{CBM1}
\end{equation}
%
The joint probability for the constrained Brownian
motion at two times is related to a chain of two
Gaussian matrix models (see \cite{AvM-Pearcey-joint})
with an external potential $A$, which again by
Karlin-McGregor reads as follows: \be
  {\BP_{Br}^{0a}(\mbox{all~}x_i(t_1)\in
E_1,\mbox{all~}x_i(t_2)\in  E_2)}
=\BP_n\bigl(\alpha;c';\tilde
E_1,\tilde E_2\bigr),%
\label{CBM2}\ee
 where ($A$ is the same diagonal matrix as in
 (\ref{diagmatrix}))
  \bean
 \BP_n\left(\alpha;c';\tilde
E_1,\tilde E_2\right)&=&
 \frac{1}{Z_n}\int_{ \mathcal{H}_n(\tilde E_{1})\times\mathcal {H}_n(\tilde E_{2})
}\!\!\!\!\!\!\!e^{-\frac{1}{2}\Tr
(M_1^2+M_2^2-2c'M_1M_2-2AM_2)} dM_1  dM_2
 ,\eean
%
%
with
$$
\tilde  E_1=E_1 \sqrt{\frac{2t_2}{(t_2-t_1)t_1}} ,~~~~
 \tilde  E_2=E_2 \sqrt{\frac{2(1-t_1)}{(1-t_2)(t_2-t_1)}}
  $$
  \be
  c'=\sqrt{\frac{(1-t_2)t_1}{(1-t_1)t_2}}=
   \frac{\sqrt{\frac{t_1}{1-t_1}}}{\sqrt{\frac{t_2}{1-t_2}}}
 ,~~~~~~
 \al=a\sqrt{\frac{2(t_2-t_1)}{(1-t_2)(1-t_1)}}
  \label{map1}\ee
\subsection{Comparing the Dyson and constrained Brownian
motions with target $a=0$}
From the identities (\ref{Dyson1}) and (\ref{CBM1}), one
deduces the following set of identities for one time
$t'$ (in the time $t'$ of the Dyson process), setting
the point $a=0$,
 \be
\BP_{Dy}\left(\mbox{all \,$\lb_j(t')\in
E'$}\right)=\BP_n(\al, \tilde E)\Bigr|_{\al =0} =
\BP_{Br}^{0a}\left(\mbox{all \,$x_j(t)\in E$}\right)
\Bigr|_{a =0}\label{iso} ,\ee
  where
$$
\sqrt{2}E'=\tilde E=E \sqrt{\frac{2}{t\,( 1-t)}} .$$
Similarly, for two times $t'_1$ and $t_2'$, one deduces
from the identities (\ref{Dyson2}) and (\ref{CBM2}),
 \bean 
 {
\BP_{Dy}(\mbox{all~}\lb_i(t'_1)\in E'_1,
       \mbox{~all~}\lb_i(t'_2)\in E'_2)}
       &=&
       \BP_n\left(\alpha;c';\tilde
E_1,\tilde E_2\right)\Bigr|_{\al =0} \\
 &=&
 {\BP_{Br}^{0a}(\mbox{all~}x_i(t_1)\in
E_1,\mbox{all~}x_i(t_2)\in  E_2)}\Bigr|_{a =0}
 ,\eean
where, as follows from (\ref{Dyson2}) and (\ref{map1}),
one deduces
\bea
\frac{\sqrt{2}E'_1}{\sqrt{(1-c'^2)}}&=&\tilde E_1=E_1
\sqrt{\frac{2t_2}{(t_2-t_1)t_1}}
\no\\
\frac{\sqrt{2}E'_2}{\sqrt{(1-c'^2)}}&=&\tilde
E_2=E_2 \sqrt{\frac{2(1-t_1)}{(1-t_2)(t_2-t_1)}}
\label{A}\eea with $$
c'=\frac{e^{t'_1}}{e^{t'_2}}
   =\frac{\sqrt{\frac{t_1}{1-t_1}}}{\sqrt{\frac{t_2}{1-t_2}}}
 .$$
%
%
Thus the processes are related by a clock change
 \be
e^{t'_i}= \sqrt{\frac{t_i}{1-t_i}}\Longleftrightarrow
t_i=\frac{1}{1+e^{-2t'_i}}
  \Longleftrightarrow
  \frac{1}{\sqrt{t_i(1-t_i)}}=2\cosh
  t'_i,\label{time-change}\ee
 and thus $
1-c^{\prime 2} =\frac{t_2-t_1}{t_2(1-t_1)}.$
Comparing extremities in (\ref{A}), one finds
$$
E'_1=E_1 \sqrt{\frac{2t_2}{(t_2-t_1)t_1}}
 \sqrt{(1-c'^2)/2}=E_1\frac{1}{\sqrt{t_1(1-t_1)}}
$$
$$
E'_2=E_2 \sqrt{\frac{2(1-t_1)}{(1-t_2)(t_2-t_1)}}
 \sqrt{(1-c'^2)/2}=E_2\frac{1}{\sqrt{t_2(1-t_2)}}
.$$
This fact, combined with (\ref{time-change}), yields
\be E_i=E'_i \sqrt{t_i(1-t_i)}=\frac{E'_i}{2\cosh t'_i}
.\label{space-change} \ee
%
So, summarizing (\ref{time-change}) and
(\ref{space-change}), one has the relation between the
parameters of the Dyson process and constrained Brownian
motions,
\be e^{t'_i}= \sqrt{\frac{t_i}{1-t_i}}
 ,~~~~E_i\sqrt{\frac{2}{t_i(1-t_i)}}=\sqrt{2}E'_i
 .\label{space-time-change}
 \ee




\begin{theorem}\label{Theo:1.1}
  (Tracy-Widom \cite{TW-Airy},
  Adler-van Moerbeke \cite{AvM-Airy-Sine})
Taking a limit on the Dyson process, in an appropriate
 time and space scale,
 one finds the Airy process, which is stationary:
$$
\lim_{n\rightarrow \iy}
 \BP_{Dy}\left(\mbox{all}~ \lb_i\left(\frac{\tau}{n^{1/3}}\right) \in
  (-\iy, \sqrt{2n}+\frac{x}{\sqrt{2}
  n^{1/6}})\right)=\BP(\mathcal {A}(\tau)\leq x)=F(x),
$$
 $F(x)$ being the Tracy-Widom distribution.
Similarly the limit of the joint probability for the
Dyson process yields the joint probability for the Airy
process:
 $$ \lim_{n\rightarrow \iy}
 \BP_{Dy}\left(\begin{array}{l}
   \mbox{all}~ \lb_i\left(\frac{\tau_1}{n^{1/3}}\right) \in
  (-\iy, \sqrt{2n}+\frac{x_1}{\sqrt{2}
  n^{1/6}}) \\
   \mbox{all}~ \lb_i\left(\frac{\tau_2}{n^{1/3}}\right) \in
  (-\iy, \sqrt{2n}+\frac{x_2}{\sqrt{2}
  n^{1/6}})\end{array}\right)
  =
   \BP\left(\begin{array}{l} \mathcal {A}(\tau_1)\leq x_1
      ,\\
     \mathcal {A}(\tau_2)\leq x_2 \end{array}  \right)
    . $$
     Its logarithm  (setting
$s=(\tau_2-\tau_1)/2$)
$$
 H(s;x,y):=
  \log \BP\left(\mathcal {A}(\tau_1) \leq {x+y} , \mathcal {A}(\tau_2) \leq
   {x-y} \right),
 $$
satisfies the Airy PDE %
\be
  2s\frac{\pl^3 H}{\pl s\pl x \pl y}= \bigl( 2s^2\frac{\pl}{\pl y}-
y\frac{\pl}{\pl x}\bigr)\bigl(\frac{\pl^2 H}{\pl y^2}-
 \frac{\pl^2 H}{\pl x^2}\bigr)+ \left\{ \frac{\pl^2 H}{\pl x\pl y},
  \frac{\pl^2 H}{\pl x^2} \right\}_x.
 \label{Airy PDE1}\ee
\end{theorem}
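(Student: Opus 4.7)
The plan decomposes along the three assertions of the theorem: (i) single-time edge limit giving Tracy--Widom, (ii) two-time edge limit giving the extended Airy kernel, and (iii) the Airy PDE (\ref{Airy PDE1}) for $H(s;x,y)$.

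For (i), the plan is to start from (\ref{Dyson1}), which identifies the law of $\lb_i(t')$ with the GUE (rescaled by $\sqrt 2$) at every time, and use the classical determinantal description of GUE eigenvalues by the Hermite kernel $K_n(x,y)=\sum_{k=0}^{n-1}\phi_k(x)\phi_k(y)$ built from the normalized Hermite functions. Plancherel--Rotach asymptotics of $\phi_k$ under the edge scaling $x=\sqrt{2n}+u/(\sqrt 2\, n^{1/6})$ yield $\frac{1}{\sqrt 2\, n^{1/6}}K_n\to K^{(0)}$ pointwise, where $K^{(0)}$ is the Airy kernel in (\ref{r-kernel}). The super-exponential decay of the Airy function as $u\to+\iy$ supplies a dominating bound, promoting the pointwise statement to trace-norm convergence on $(x,\iy)$ and hence to convergence of the Fredholm determinants, giving $F(x)=\det(I-K^{(0)})_{(x,\iy)}$.

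For (ii), I would exploit the fact that the Dyson process is itself determinantal, with the extended Hermite kernel; from (\ref{Dyson2}) the coupling $e^{c'\Tr M_1M_2}$ acts as the Ornstein--Uhlenbeck heat propagator on Hermite modes, producing a sum of the form $\sum_k e^{\mp k(t_2'-t_1')}\phi_k(x)\phi_k(y)$ (the sign depending on the time ordering). Under $t_i'=\tau_i/n^{1/3}$ combined with the same edge scaling in space, the factor $e^{-k(t_2'-t_1')}$ at $k\sim n$ generates, in the saddle-point analysis of $\phi_k$, an extra exponential roughly $e^{(\tau_1-\tau_2)u}$ inside the Airy contour integrand; assembling the pieces recovers the extended Airy kernel of Pr\"ahofer--Spohn, and the trace-class argument of (i) yields joint convergence of the Fredholm determinants to $\BP(\mathcal A(\tau_1)\leq x_1,\mathcal A(\tau_2)\leq x_2)$.

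For (iii), I would follow the Adler--van Moerbeke strategy parallel to the one used later in sections \ref{sect:KP}--\ref{sect:PDE} of the paper for the $r$-outlier case. The first step is to insert KP deformation times into the coupled Gaussian integral (\ref{Dyson2}) so that its boundary-indexed integrals become a $\tau$-function of the two-Toda / multi-component KP hierarchy. Infinitesimal rescalings and shifts of the matrix variables then produce Virasoro constraints, which relate $\log\tau$-derivatives in the boundary points $E_1,E_2$ and in the coupling $c'$ to KP-time derivatives; eliminating the KP-time derivatives using the KP bilinear identities yields a closed finite-$n$ PDE purely in the boundary points and in $c'$. Converting via (\ref{time-change})--(\ref{space-change}) to $(s,x,y)$-variables and expanding to leading order in $n^{-1/3}$ under the edge scaling should collapse the identity to (\ref{Airy PDE1}). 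The hard part will be this last step: the finite-$n$ PDE mixes terms of various orders in $n^{-1/3}$, and isolating the correct dominant balance that recovers the Airy-level identity, while carefully tracking how the Virasoro generators act on the restricted Fredholm determinant, is the delicate point; parts (i) and (ii) are by comparison standard edge asymptotics plus Fredholm-determinant continuity under trace-norm limits.
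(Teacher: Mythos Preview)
The paper does not prove Theorem~\ref{Theo:1.1}; it is quoted as a known result from the literature, with attribution to Tracy--Widom \cite{TW-Airy} and Adler--van~Moerbeke \cite{AvM-Airy-Sine}, and is then used as a black box to deduce Corollary~\ref{Cor:k-Airy}. So there is no proof in the paper to compare against.

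That said, your outline is faithful to how those cited works actually establish the three claims. Part~(i) is precisely the Tracy--Widom argument: determinantal structure of GUE via the Hermite kernel, Plancherel--Rotach edge asymptotics, and trace-class control to pass to the Airy Fredholm determinant. Part~(ii) is the Pr\"ahofer--Spohn/Johansson/Tracy--Widom route through the extended Hermite kernel; your description of how the Ornstein--Uhlenbeck weight $e^{-k(t_2'-t_1')}$ survives the edge scaling to produce the extended Airy kernel is the right mechanism. Part~(iii) is exactly the Adler--van~Moerbeke strategy of \cite{AvM-Airy-Sine}: deform the two-matrix Gaussian integral (\ref{Dyson2}) into a two-Toda $\tau$-function, derive Virasoro constraints in the boundary points and the coupling $c'$, combine with the integrable hierarchy identities to get a closed finite-$n$ PDE, and then pass to the edge scaling limit to extract (\ref{Airy PDE1}). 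Your caveat about the dominant-balance step in (iii) is well placed --- that is indeed where the work lies in \cite{AvM-Airy-Sine} --- but the overall plan is the correct one.
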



\begin{corollary}\label{Cor:k-Airy} Taking an appropriate scaling limit on the constrained Brownian
motion, we have
 \bean { \lim_{n\rightarrow \iy}
\BP^{0a}_{Br}\left(\mbox{all}~
x_i\left(\frac{1}{1+e^{-2\tau/n^{1/3}}}\right) \in
  \frac{(-\iy, \sqrt{2n}+\frac{x}{\sqrt{2}
  n^{1/6}})}{2\cosh( \tau/n^{1/3})}\right)\Bigr|_{a=0}
  }=\BP(\mathcal {A}(\tau)\leq x)
  \eean
 and similarly for two times. \end{corollary}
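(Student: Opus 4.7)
The plan is to reduce the corollary directly to Theorem~\ref{Theo:1.1} via the exact probabilistic identities established in Section~1.3. Setting $a=0$ in (\ref{iso}) gives the equality
\[
\BP^{0a}_{Br}(\mbox{all } x_j(t) \in E)\big|_{a=0} \;=\; \BP_{Dy}(\mbox{all } \lambda_j(t') \in E'),
\]
valid whenever $(t,E)$ and $(t',E')$ are matched by the clock change (\ref{time-change}) and the space rescaling (\ref{space-change}), jointly encoded in (\ref{space-time-change}). So the proof is not a new asymptotic argument but a verification that the specific scaling prescribed in the corollary corresponds, under these identities, to the Dyson edge scaling of Theorem~\ref{Theo:1.1}.

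Concretely, the time $t = (1+e^{-2\tau/n^{1/3}})^{-1}$ appearing in the corollary is, by inverting (\ref{time-change}), precisely $t' = \tau/n^{1/3}$. The window
\[
E \;=\; \frac{(-\iy,\;\sqrt{2n}+x/(\sqrt{2}\,n^{1/6}))}{2\cosh(\tau/n^{1/3})}
\]
satisfies, by (\ref{space-change}), $E' = 2\cosh(t')\,E = (-\iy,\;\sqrt{2n}+x/(\sqrt{2}\,n^{1/6}))$, which is exactly the Dyson edge window of Theorem~\ref{Theo:1.1}. Invoking that theorem then yields the claimed limit $\BP(\mathcal{A}(\tau)\leq x)$.

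For the joint two-time statement, the same strategy applies, now using (\ref{CBM2}) with $a=0$ together with (\ref{Dyson2}), and with parameter matching given by (\ref{map1}) at $\alpha=0$ combined with (\ref{A}). The key observation, already established in deriving (\ref{time-change}), is that the coupling $c' = \sqrt{(1-t_2)t_1/((1-t_1)t_2)}$ becomes $e^{-(t_2'-t_1')}$, which is precisely the Ornstein-Uhlenbeck correlation appearing in the two-time Dyson density; moreover each spatial window rescales via (\ref{space-change}) to the Dyson edge window at the corresponding Dyson time. No substantive obstacle arises: the real work has been done in proving the exact identities of Section~1.3 and in Theorem~\ref{Theo:1.1}, and the corollary follows by straightforward bookkeeping under those identities.
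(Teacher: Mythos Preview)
Your proof is correct and follows essentially the same route as the paper: invoke the exact identity (\ref{iso}) between the constrained Brownian motion at $a=0$ and the Dyson process, then check via (\ref{time-change}) and (\ref{space-change}) that the scaling in the corollary becomes precisely the Dyson edge scaling of Theorem~\ref{Theo:1.1}. Your write-up is more explicit about the parameter matching than the paper's one-sentence proof, but the argument is the same.
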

\begin{proof} This statement follows immediately from Theorem
1.1 and the correspondence (\ref{iso}), upon using the clock
change (\ref{time-change}) and the space change
(\ref{space-change}), with the appropriate time and
space scalings of Theorem \ref{Theo:1.1}.\end{proof}

\bigbreak

\noindent Remembering the definition
 $$\BP_{n} (\alpha ,
\tilde{E})
 = \frac{1}{Z_n}\int_{\HR_n(\tilde E)}
 dM e^{-\frac{1}{2}\Tr (M^2-2AM)},$$
  for the diagonal matrix $A$ as in (\ref{diagmatrix}),
   we now state a Theorem of P\'ech\'e, which is closely
  related to the multivariate statistical problem
  mentioned in the introduction; see \cite{BBP}.

\begin{theorem}  \label{Peche}(P\'ech\'e \cite{Peche})
\be \lim_{n\rightarrow \iy}
 \BP_n\left(\rho\sqrt{n},
 (-\iy,2\sqrt{n}+\frac{x}{n^{1/6}})\right)=
 \left\{
  \begin{array}{l}
  F(x)\mbox{  for} ~~\rho <1\\ \\
  F^{(r)}(x)\mbox{  for} ~~\rho =1
  \end{array}
  \right.
  \label{PecheF}\ee
 where
 \be
 F^{(r)}(x)=\det (I-K^{(r)}_\tau{\chi_{
 }}_{(x,\iy)}(y))\Bigr|_{\tau=0}
 \label{r-Airy1}\ee
  is the Fredholm determinant of the kernel (see
  (\ref{k-Airy0}))
    \be
 K^{(r)}_\tau(x,y)=\int_0^{\iy}
 du A_r^-(x+u; \tau)A_r^+(y+u; \tau).
 \label{r-Airy kernel}\ee
 For $r=0$, (\ref{r-Airy1}) yields the
 Tracy-Widom distribution $F(x)$.

\end{theorem}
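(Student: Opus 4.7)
The plan is to exploit the determinantal structure of the Gaussian matrix model with external source $A$ and perform a classical steepest-descent analysis on its correlation kernel. The probability $\BP_n(\alpha,\tilde E)$ is a gap probability for a determinantal point process, so it suffices to show that the rescaled kernel converges in trace norm on $(x,\iy)$ to the appropriate Airy or $r$-Airy kernel.

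First, I would invoke the Br\'ezin--Hikami / Bleher--Kuijlaars double contour integral representation of the eigenvalue correlation kernel for the matrix model with external source $A=\operatorname{diag}(a_1,\ldots,a_n)$, namely (schematically)
\be
K_n(x,y)=\frac{1}{(2\pi i)^2}\int_\Gamma dz\int_{i\BR}dw\;\frac{e^{w^2/2-yw}}{e^{z^2/2-xz}}\,\frac{1}{w-z}\prod_{j=1}^n\frac{z-a_j}{w-a_j},
\ee
where $\Gamma$ encloses all the $a_j$. Inserting $a_1=\cdots=a_r=\rho\sqrt n$ and $a_{r+1}=\cdots=a_n=0$ separates the product into a ``bulk'' factor $(z/w)^{n-r}$ and a ``source'' factor $\bigl((z-\rho\sqrt n)/(w-\rho\sqrt n)\bigr)^r$.

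Second, I would rescale $z=\sqrt n\,\tilde z$, $w=\sqrt n\,\tilde w$, and apply the edge scaling $x=2\sqrt n+u/n^{1/6}$, $y=2\sqrt n+v/n^{1/6}$. The bulk factor combined with the Gaussian weight gives $e^{n\phi(\tilde z)-n\phi(\tilde w)}$ with $\phi(\tilde z)=\tilde z^2/2-2\tilde z+\log\tilde z$, for which a direct computation shows $\phi'(1)=\phi''(1)=0$ and $\phi'''(1)=2$. Hence $\tilde z=1$ is a double saddle, and the Airy substitution $\tilde z=1+i\zeta/n^{1/3}$, $\tilde w=1+i\eta/n^{1/3}$ produces the cubic exponent $\tfrac{i^3\zeta^3}{3}+iu\zeta$ (and similarly for $\eta$), which is exactly the Airy exponential $e^{ia^3/3+iau}$ appearing in $A^{\pm}_r(u;0)$.

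Third, I would separately analyze the source factor $\bigl((\tilde z-\rho)/(\tilde w-\rho)\bigr)^r$ in the Airy window. If $\rho<1$, then $\tilde z-\rho\to 1-\rho\neq 0$, so the source factor tends to $1$ and the kernel converges to the Airy kernel $K^{(0)}$, yielding $F(x)$. If $\rho=1$, the pole at $\tilde z=\rho$ coalesces with the saddle: one has $\tilde z-1=i\zeta/n^{1/3}$ and $\tilde w-1=i\eta/n^{1/3}$, so the source factor equals $(\zeta/\eta)^r$ uniformly in the Airy window---this is the precise factor $(-ia)^{-r}(+ib)^r$-type contribution needed to produce $A^-_r(u;0)A^+_r(v;0)$. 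Using the identity $(\eta-\zeta)^{-1}=-\int_0^\iy e^{-(\eta-\zeta)w}dw$ on suitably oriented contours then factorizes the limiting kernel as $\int_0^\iy A_r^-(w+u;0)A_r^+(w+v;0)\,dw=K^{(r)}_0(u,v)$. Finally, translating the probability through the change of variables (\ref{CBM1}) with $\alpha=\rho\sqrt n$ and passing from pointwise to Fredholm convergence yields the stated limits for $F(x)$ and $F^{(r)}(x)$.

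The main obstacle is making the steepest-descent analysis fully rigorous at the critical point $\rho=1$: one must deform $\Gamma$ and the $w$-contour so that they pass through the double saddle $\tilde z=1$ along the appropriate steepest-descent rays at angles $\pm\pi/3$, cross at the correct relative position to give the sign $+$ in $1/(w-z)$, and avoid spurious pole crossings since the pole at $\tilde z=\rho\sqrt n$ sits exactly at the saddle. One further needs exponential tail bounds off the saddle neighborhoods, uniform in $u,v$ varying in compacts, in order to upgrade pointwise convergence of $K_n$ to convergence in trace class on $(x,\iy)$ and hence of the Fredholm determinants; this is the delicate technical part carried out in \cite{Peche} (and, in the Wishart setting, in \cite{BBP}).
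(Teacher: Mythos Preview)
Your approach is correct and essentially coincides with what the paper does. Note that the paper does not give an independent proof of this particular statement---it is quoted as a theorem of P\'ech\'e---but the proof of the more general time-dependent Theorem~\ref{Theo1} sketched in Section~\ref{sect2} follows exactly your outline: the same phase function $F(z)=z^2/2-2z+\log z$ with its cubic degeneracy at $z=1$, the same Airy substitution $\tilde z=1+u/n^{1/3}$, the same dichotomy according to whether the source parameter equals or is strictly less than the saddle, and the same factorization of $1/(w-z)$ as an exponential integral; the paper likewise defers the rigorous contour deformations and trace-norm estimates to \cite{BBP,Peche}.
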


 The Airy process will now be deformed in a way
  which is compatible with P\'ech\'e's Theorem, especially concerning the target point $a$,
  $$
\BP^{(0a)}_{Br}\left(\mbox{all}~
x_i\left(\frac{1}{1+e^{-2\tau/n^{1/3}}}\right) \in
  \frac{(-\iy, \sqrt{2n}+\frac{x}{\sqrt{2}
  n^{1/6}})}{2\cosh( \tau/n^{1/3})}\right)
  .$$
   Since, from (\ref{CBM1}),
$$
 \BP_{Br}^{0 a} \left( \mbox{all $x_j(t)\in E$} \right)
=\BP_{n}\left(a \sqrt{\frac{2t }{1-t}};
E\sqrt{\frac{2}{t(1-t)}}\right)
$$
 holds, we have
\bean
 \lefteqn
 {\BP^{(0a)}_{Br}\left(\mbox{all}~
x_i\left(\frac{1}{1+e^{-2\tau/n^{1/3}}}\right) \in
  \frac{ (-\iy,\sqrt{2n}+\frac{x}{\sqrt{2}
  n^{1/6}})}{2\cosh( \tau/n^{1/3})}\right)
 }  \hspace*{4cm} \\
    && ~~~~~~~~~~~~~~~~~~~~~~~~~~~~~~
  =\BP_n\left(a\sqrt{2}e^{\tau/n^{1/3}};
 (-\iy,2\sqrt{n} +\frac{x}{n^{1/6}})
 \right).\eean
  Comparing this formula at time $t=1/2$, or what is the same at $\tau=0$,
   with formula (\ref{PecheF})
 suggests the choice \be a=\rho\sqrt{\frac{n}{2}}.\label{choice a}\ee
  We thus define the {\bf Airy process with $r$ outliers}
  (in short: $r$-Airy process) $\mathcal {A}^{(r)}(\tau)$
  by means of the Airy scaling as in Corollary
  \ref{Cor:k-Airy} and with the choice of $a$ above for $\rho=1$. Notice
  from (\ref{CBM1}) and using (\ref{space-time-change}),
  this can also be expressed in terms of the matrix
  model with external potential:
  \bean {\BP(  \mathcal {A}^{(r)}(\tau)\cap
    E=\emptyset)
  }
  &:=& \lim_{n\rightarrow \iy}
\BP^{(0,\sqrt{n/2})}_{Br}\left(\mbox{all}~
x_i\left(\frac{1}{1+e^{-2\tau/n^{1/3}}}\right) \in
  \frac{ \sqrt{2n}+\frac{E^c}{\sqrt{2}
  n^{1/6}} }{2\cosh( \tau/n^{1/3})}\right)
  \\ \\
  &=&
   \lim_{n\rightarrow \iy}
  \BP_{n}\left({}\sqrt{n}e^{\tau /n^{1/3}};
  2\sqrt{n} +\frac{E^c}{n^{1/6}}   \right).%
  \eean
  In a similar way, one defines the joint probability
  of $\mathcal {A}^{(r)}(\tau)$
  for any number of times. The next section deals with
  this limit expressed in terms of a Fredholm
  determinant.


 \section{The existence of the limit to the $r$-Airy kernel}\label{sect2}

The first part of this section deals with a sketch of the proof of
 Theorem \ref{Theo1}; details and rigor can be found in \cite{BBP} and
 \cite{Peche}. In the second part,
 the $r$-Airy kernel (\ref{r-Airy kernel})
  will be expanded for large time $\tau$.

\begin{proof}[Proof of Theorem \ref{Theo1}]
From the explicit Brownian motion transition
probability, one shows (see Johansson \cite{Johansson1}
and Tracy-Widom \cite{TW-Pearcey})

$$
\BP_{Br}^{0a}\left(\mbox{all $x_j(t)\in E$}\right) =
  {\det(I-H_n^{(r)})}_{L^2(E^c)}
  $$
  where
 \newline\noindent$
H_n^{(r)}(x,y)dy =
  $
\be -\frac{dy}{2 \pi^2 (1-t)} \int_{ \mathcal{D}} dz
\int_{\Gamma_{L}} dw ~e^{ -\frac{ tz^2}{1-t}  +
\frac{2xz}{1-t} + \frac{tw^2}{1-t} - \frac{2wy}{1-t} }
\bigg(\frac{w}{z}\bigg)^{n-r}
\bigg(\frac{w-a}{z-a}\bigg)^{r} \frac{1}{w-z}
;\label{C032} \ee $\mathcal{D}$ is a closed contour
containing the points $0$ and $a$, which is to the left
of the line $\Gamma_L:=L+i{\mathbb R}$ by picking $L$ large
enough. So, $\Re
(w-z)>0$.
 Consider now an arbitrary point $(y,t)$
 on the curve $\mathcal{C}$, parametrized by (\ref{tangency1})
  and the point $(\rho\sqrt{n/2},1)$,
 which is the point of intersection of the tangent to
 $\mathcal{C}$ at $(y,t)$ with the axis $(t=1)$; as pointed out before, it is convenient to parametrize $\rho$ by $\rho=e^{-\sigma}$ and thus
  \be
 (y,t)= \left(\frac{\rho \sqrt{2n}}{1+\rho^2},
 \frac{1}{ 1+\rho^2 }\right) = \left(\frac{\sqrt{2n}    }
   {2\cosh(\sigma)}  ,\quad  \frac{1}
 {1+e^{-2 \sigma }}\right)\in \mathcal{C}
.\label{tangency1}
 \ee
  Consider the Brownian motions with $r$ outliers forced to a point $\rho_0\sqrt{n/2}$ with $0<\rho_0\leq \rho$ at time $t=1$; here also parametrize $\rho_0$ by $\rho_0=e^{-\tau_0}$ and set $\alpha=\rho_0 / \rho=e^{\sigma-\tau_0}$.

The main issue is to compute the following limit, for $0<\rho_0\leq \rho$
\bean 
\lim_{n\rightarrow \iy}
\BP^{(0,\rho_0\sqrt{n/2})}_{Br}\left(\mbox{all}~
x_i\left(\frac{1}{1+e^{-2(\sigma+\frac{\tau}{n^{1/3}})}}\right) \in
  \frac{  \sqrt{2n}+\frac{E^c}{\sqrt{2}
  n^{1/6}} }{2\cosh(\sigma+ \frac{\tau}{n^{1/3}})}\right)
  ,\eean
 which dictates the space- and
  time-scale to be used in the kernel
 $H_n^{(r)}$ for large $n$. Let $t$ and $y$ be the time and space variables for the Brownian motion, which in terms of the new time and space scale $\mathcal{L}$ reads:
 \be \mathcal{L}:\quad
  t=\frac{1}{1+e^{-2(\sigma+\tau/n^{1/3})}},\quad
  x= \frac{
\sqrt{2n}+\frac{u}{\sqrt{2}
  n^{1/6}} }{2\cosh(\sigma+ \frac{\tau}{n^{1/3}})},\quad y= \frac{  \sqrt{2n}+\frac{v}{\sqrt{2}
  n^{1/6}}}{2\cosh(\sigma+ \frac{\tau}{n^{1/3}} )}
 , \label{substit}\ee
  with target point $a=\rho_0\sqrt{n/2}=e^{-\tau_0}\sqrt{n/2}$. Putting this rescaling in the integral (\ref{C032}) suggests changes of integration variables
 $$ z:=\tilde z\sqrt{\frac{n}{2}}e^{-\sigma-\tau/n^{1/3}}\mbox{    and    }
  w:=\tilde w\sqrt{\frac{n}{2}}e^{-\sigma-\tau/n^{1/3}}$$
 in the integral (\ref{C032}); the exponential will contain a function $ F(z)$, with Taylor series at $z=1$: \be F(z)=\frac{z^2}{2}-2z+\log z
  = F(1)+\frac{1}{3}(z-1)^3+ { O}(z-1)^4\label{F}
 . \ee   Also set $\zeta
 :=1+\frac{\gamma}{n^{1/3}}$ for some parameter $\gamma$ and
 \be
  Z_n=\left\{\begin{array}{lll}n^{-r/3}e^{nF(1)} &\mbox{ for }&\alpha=1\\
                                     (1-\alpha)^r e^{nF(1)}
                                     & \mbox{ for } &0<\alpha<1
                                     \end{array}\right.
     .\label{Zn}\ee
 Then, using,
 $$
\frac{1}{n^{1/3}(\tilde w-\tilde z)}=\int_0^{\iy}
 e^{-yn^{1/3}(\tilde w-\tilde z)}dy
 ,~~~~~~\mbox{for}~~\Re
(\tilde w-\tilde z)>0,$$
 one checks\footnote{ by elementary computation$$
 \frac{t}{1\!-\!t}=e^{2(\sigma+\frac{\tau}{n^{1/3}})}, \quad \frac{dy}{1-t} \frac{dz dw}{w-z}=
  \frac{n^{\frac{1}{3}}d\tilde z d\tilde w dv}{\tilde w-\tilde z}
 ,\quad  \frac{2xz}{1\!-\!t}=(2n+un^{\frac{1}{3}})\tilde z,\quad \frac{tz^2}{1\!-\!t} =\frac{n}{2}\tilde z^2.
   $$}, using the rescaling (\ref{substit}):
   \bean
 \lefteqn
{  \left.
 H_n^{(r)}\left(
x,y\right)dy
  \right|_{\mathcal{L}}
 }
   \\
  &=&
 -\left(\frac{n^{1/3}}{2\pi}\right)^2
 dv\int_{\mathcal{D}} d\tilde z\int_{\Gamma_L}
 d\tilde w
 e^{(-\frac{n}{2}\tilde z^2+\tilde z(2n+n^{1/3}u))}
 e^{-(-\frac{n}{2}\tilde w^2+\tilde w(2n+n^{1/3}v))}
 \\
 &&\left(\frac{\tilde w}{\tilde z}\right)^n
 \left(  \frac{\tilde w-\alpha e^{\tau/n^{1/3}}}{\tilde
 w}\right)^r
 \left(\frac{\tilde z-\alpha e^{\tau/n^{1/3}}}{\tilde z} \right)^{-r}
 \frac{1}{n^{1/3}(\tilde w-\tilde z)}
\\
%
&=&- e^{\zeta n^{1/3}(u-v)}dv\int_0^{\iy}dy \frac{Z_n
n^{1/3}}{2\pi}\int_\mathcal{D} d\tilde z~e^{-nF(\tilde z)~}
 \left(\frac{\tilde z}{\tilde
z-\alpha e^{\tau/n^{1/3}}}\right)^r
 e^{n^{1/3}(u+y)(\tilde z-  \zeta)}
\\
&&
\frac{ n^{1/3}}{2\pi
Z_n}\int_{\Gamma_L} d\tilde w~e^{ nF(\tilde w)~}
 \left(\frac{\tilde w}{\tilde
w-\alpha e^{\tau/n^{1/3}}}\right)^{-r}
 e^{-n^{1/3}(v+y)(\tilde w-  \zeta)}
.\eean
Conjugating the kernel, which leaves invariant the
Fredholm determinant, one finds
$$
 e^{  n^{1/3}(v-u)} \left.
H_n^{(r)}\left(x,y\right)\right|_{\LR}dy =-dv~e^{\gamma(u-v)}\int_0^{\iy}dy~\mathcal{I}_{\tau}^{(n)}(u+y)\mathcal{J}_{\tau}^{(n)}(v+y)
$$
with
\bea
 \label{Integrals}
 \mathcal{I}_{\tau}^{(n)}(x)&=&\frac{n^{1/3}Z_n}{2\pi}\int_\mathcal{D} d\tilde z~e^{-nF(\bar z)}\left( \frac{\tilde
z}{\tilde z-\alpha e^{\tau/n^{1/3}}}\right)^re^{
n^{1/3}x(\tilde z-\zeta)} \\
\no\\
\mathcal{J}_{\tau}^{(n)}(y)&=&\frac{n^{1/3}}{2\pi~Z_n}\int_{\Gamma_L}d\tilde
w~e^{nF(\tilde w)}\left( \frac{\tilde w}{\tilde
w-\alpha e^{\tau/n^{1/3}}}\right)^{-r}e^{ -n^{1/3}y(\tilde
w-\zeta)} .\no\eea
Using the Taylor series (\ref{F}) for $F(z)$ and the value (\ref{Zn}) for $Z_n$, 
%
%
 one is led naturally to pick a new variable $u$ such that
$ \tilde z-1=\frac{u}{n^{1/3}}. $ Then
\bean \lefteqn
{n^{1/3}Z_nd\tilde z~e^{-nF(\tilde z)}\left(\frac{\tilde z}{\tilde z-\alpha e^{\tau/n^{1/3}}}\right)^re^{n^{1/3}
x(\tilde z-\zeta)}}
\\
&=&n^{1/3}Z_n
\frac{du}{n^{1/3}}\left(
e^{-nF(1)-\frac{u^3}{3}}+ {
O}\left(\frac{1}{n^{1/3}}\right)\!\!\right)
\\
&&
\hspace*{4cm} \left(\frac{u+n^{1/3}}{u+n^{1/3}\!-\!\alpha \left(
1\!+\!\frac{\tau}{n^{1/3}}\!+\!\ldots\right)n^{1/3}}
\right)^re^{x(u-\gamma)}\\
\\
&=&\left\{\begin{array} {lll}e^{-x\gamma}du~e^{-\frac{u^3}{3}+xu}\left(
\frac{1}{u-\tau}\right)^r+\mbox{lower order terms}&\mbox{ for } &
 \alpha=1\\
  e^{-x\gamma}du~e^{-\frac{u^3}{3}+xu}+\mbox{lower order terms}&\mbox{ for } &
0< \alpha<1.
\end{array}\right.
\eean
%
%
Upon remembering the definition of the functions $A^\pm _r(x,\tau)$ and the Airy function $A(x)$ and also the fact that $\alpha=\rho_0/\rho$, the rigorous saddle point argument, given in
\cite{BBP,Peche}, yields
\bea \lefteqn{ \lim_{n\rg\iy}\mathcal{I}_{\tau}^{(n)}(x)=}\no\\
&&
\left\{\begin{array}{l} \frac{1}{2\pi}
 \int_{\mathcal{D}'}\frac{1}{(z-\tau)^r}e^{-\frac{z^3}{3}}e^{
 x(z-\gamma)}dz=
\mathcal{I}_{\tau}(x):=e^{-x\gamma}A^-_r(x,\tau)\\
\qquad\qquad\qquad\qquad\qquad\qquad \mbox{ for }\quad
\rho_0=\rho
  \\
   \frac{1}{2\pi}
 \int_{\mathcal{D}'} e^{-\frac{z^3}{3}}e^{
 x(z-\gamma)}dz= \mathcal{I}_{ }(x):=e^{-x\gamma}A(x)\\
 \qquad\qquad\qquad\qquad\qquad\qquad\mbox{ for }\quad 0<\rho_0<\rho
   \end{array} \right.\label{I}\\ \no\\
\lefteqn{\lim_{n\rg\iy} \mathcal{J}_{\tau}^{(n)}(y)=}\no
\\& &
\left\{ \begin{array}{l}\frac{1}{2\pi}
\int_{\mathcal{D}''}
(z-\tau)^r
e^{\frac{z^3}{3}}e^{-y(z-\gamma)}dz=\mathcal{J}_{\tau}(y):=-e^{y\gamma}A^+_r(y,\tau),\\
\qquad\qquad\qquad\qquad\qquad\qquad\mbox{for }\quad
\rho_0=\rho
 \\
 \frac{1}{2\pi}
\int_{\mathcal{D}''}
e^{\frac{z^3}{3}}e^{-y(z-\gamma)}dz=\mathcal{J}(y):=-e^{y\gamma}A(y),\\
\qquad\qquad\qquad\qquad\qquad\qquad\mbox{for }\quad
0<\rho_0<\rho
\end{array}\right.
\label{J}
 \eea
  where
 $\mathcal{D}'$ is a contour running from $\iy
 e^{4i\pi/3}$ to $\iy
 e^{2i\pi/3}$, with an indentation to the right of $\tau$
 , such that $\tau$ is to the left of the contour,
and where
  $\mathcal{D}''$ is a contour running from $\iy
 e^{-i\pi/3}$ to $\iy
 e^{i\pi/3}$, with an indentation to the right of $\tau$,
 such that $\tau$ lies also to the left of the contour.
 Upon rotating the two contours and deforming $\mathcal{D}''$
 slightly, since the integrand is pole-free, one
 gets the final identities in the equations above.

 Therefore
 $ \BP(\sup \mathcal {A}^{(r)}(\tau)\cap
 E=\emptyset)=\det(I-K_\tau^{(r)})_E ,$
with
\bean
  \lefteqn{
 {K_\tau^{(r)}(u,v)dv}
=\lim_{n\rg\iy}
 e^{n^{1/3}(v-u)}
H_n^{(r)}\left(x,y\right)dy\Bigr|_{\LR}
 }\\
    &=&
    \left\{\begin{array}{l}
   dv
 \int^{\iy}_0 dw~A^-_r(u+w,\tau)A^+_r(v+w,\tau)
 \qquad\mbox{  for  } \rho_0=\rho \\ \\
   dv
 \int^{\iy}_0 dw~A (u+w,\tau)A (v+w,\tau)
 \qquad\mbox{  for  } 0<\rho_0<\rho.
\end{array}\right.\eean
 Baik \cite{Baik} has shown that the Fredholm determinant of
the $r$-Airy kernel is a probability distribution, i.e.,
$$\lim_{x\rg \pm \iy}
\det\left(I-K_\tau^{(r)}\right)_{(x,\iy)}=\left\{{1
\atop 0 .}\right.$$ This establishes Theorem
\ref{Theo1}.\end{proof}



\begin{remark} In this section one lets $\tau\rg -\iy$, which
implies that $-i\tau$ remains above the contour $C$ and
is thus compatible with the contour mentioned above.
Letting $\tau\rg +\iy$ would require a drastic change of
the functions $A^{\pm}_r$.
\end{remark}

The next statement concerns the asymptotic
behavior of the $r$-Airy kernel for $\tau\rg -\iy$,
   \be
 K^{(r)}_\tau(u,v)=\int_0^{\iy}
 dw A_r^-(u+w; \tau)A_r^+(v+w; \tau)
 ,\label{r-kernel2}\ee
 where (remember)
 \be
A_r^\pm(u; \tau) =
 \int_C ~e^{\frac{1}{3} ia^3+iau }
 \left( {\mp ia-\tau}\right)^{\pm r}
  \frac{da}{2\pi}
 ,\label{k-Airy}\ee
 where $C$ is a contour running from $\iy
 e^{5i\pi/6}$ to $\iy
 e^{i\pi/6}$, such that $-i\tau$ lies above the contour.
%
This limit is compatible with the contour $C$
 appearing in the definition of the functions
 $A_r^{\pm}$, since then $-i\tau$ remains
 above the contour $C$, as required.

 \begin{lemma}\label{lemma:kernels} Given the
 ``initial condition" $$\lim_{\tau\rg
 -\iy}K^{(r)}_\tau(u,v)=K^{(0)}(u,v):=
 \frac{A(u)A'(v)-A'(u)A(v)}{u-v}=
  \mbox{``Airy kernel"}
 ,$$
   the kernel
 $K^{(r)}(u,v)$ behaves asymptotically for
 $\tau\rg -\iy$, as
$$ K^{(r)}(u,v)
=
 K_{0}+\frac{K_1^{(r)}}{\tau}+\frac{K_2^{(r)}}{\tau^2}+
 \frac{K_3^{(r)}}{\tau^3}+\ldots
,$$
 where\footnote{Whenever $\pl /
 \pl w$ appears with a negative exponent in the formula below, it is set $=0$.} for $n\geq 1$,
 {
 \bean
{K_n^{(r)}(u,v)}
&=&-\frac{r^n}{n!}\left(\frac{\pl}{\pl w}\right)^{n-1}A(u+w)A(v+w)\Big|_{w=0}\\
\\
& &-\frac{r^{n-1}}{2(n-2)!}\left(\frac{\pl}{\pl
w}\right)^{n-2}\!\!\Big(A'(u+w)A(v+w)\!-\!A
(u+w)A'(v+w)\Big)\Big|_{w=0}\\
\\
& &-\frac{r^{n-2}}{(n-3)!}\left(
\begin{array}{l}
\frac{3n-1}{24}\left(\frac{\pl}{\pl w}\right)^{n-1}A(u+w)A(v+w)\\
-\frac{n-1}{2}\left(\frac{\pl}{\pl
w}\right)^{n-3}A'(u+w)A'(v+w)
\end{array}\right)\Bigg|_{w=0}
 \\
 & &
+~(\mbox{polynomial of degree $n-3$ in $r$}).
 \eean}
Although the kernel $K_\tau^{(r)}(u,v)$ involves
integration, the terms $K_i^{(r)}(u,v)$ in the expansion
never involve integration, they are quadratic in the
Airy function and its derivatives; also the
$K_i^{(r)}(u,v)$ are polynomials in $r$ of degree $i$,
divisible by $r$, with alternately symmetric and
skew-symmetric coefficients in $u$ and $v$, the top
coefficient being symmetric.

\end{lemma}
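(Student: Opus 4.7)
The plan is to expand the outer factors of $A_r^\pm(u;\tau)$ binomially in $1/\tau$, form the Cauchy product, integrate term-by-term in $w$, and then reduce each resulting integral to boundary data at $w=0$ via integration by parts. The two highest powers of $r$ at each order follow cleanly from the Leibniz rule alone; the third-highest requires the Airy equation $A''(x)=xA(x)$ together with a combinatorial cancellation of residual bulk integrals.

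For $\tau\to-\infty$ write $(ia-\tau)^{-r}=(-\tau)^{-r}(1-ia/\tau)^{-r}$ and $(-ia-\tau)^{r}=(-\tau)^{r}(1+ia/\tau)^{r}$; after truncating the resulting binomial series at any order $N$, the rapid decay of $e^{ia^3/3}$ on the tails of $C$ bounds the remainder as $O(\tau^{-N-1})$, yielding an asymptotic expansion to all orders. Combined with the identity $\int_C (ia)^k e^{ia^3/3+iau}\frac{da}{2\pi}=A^{(k)}(u)$, this gives $A_r^-(u;\tau)=(-\tau)^{-r}\sum_{k\geq 0}(-1)^k\binom{-r}{k}\tau^{-k}A^{(k)}(u)$ and $A_r^+(u;\tau)=(-\tau)^{r}\sum_{k\geq 0}\binom{r}{k}\tau^{-k}A^{(k)}(u)$, and the $(-\tau)^{\pm r}$ prefactors cancel in the product. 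Integration in $w$ then yields
\[
K_n^{(r)}(u,v)=\sum_{k+l=n}(-1)^k\binom{-r}{k}\binom{r}{l}\int_0^\infty A^{(k)}(u+w)A^{(l)}(v+w)\,dw.
\]
The $r^n$-coefficient of $(-1)^k\binom{-r}{k}\binom{r}{l}$ is $\frac{1}{k!\,l!}$, so the Leibniz identity $\sum_{k+l=n}\frac{1}{k!l!}A^{(k)}(u+w)A^{(l)}(v+w)=\frac{1}{n!}\partial_w^n[A(u+w)A(v+w)]$ together with integration from $w=0$ to $\infty$ produces the first term $-\frac{r^n}{n!}\partial_w^{n-1}[A(u+w)A(v+w)]\big|_{w=0}$ of the lemma. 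The $r^{n-1}$-coefficient equals $\frac{(k-l)(n-1)}{2\,k!\,l!}$, and the analogous identity $\sum_k(k-l)\binom{n}{k}A^{(k)}(u+w)A^{(l)}(v+w)=n\,\partial_w^{n-1}[A'(u+w)A(v+w)-A(u+w)A'(v+w)]$ delivers the second term of the lemma.

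The main obstacle is the $r^{n-2}$ coefficient, where Leibniz no longer directly applies: one must invoke $A''(x)=xA(x)$ to rearrange stacked derivatives and then check that the surviving integrals either cancel outright or combine into $w$-derivatives whose boundary values at $w=0$ reproduce the third term of the lemma. Already at $n=2$ the residual integral $\int_0^\infty A'(u+w)A'(v+w)\,dw$ drops out by the identity $\binom{r}{2}+\binom{-r}{2}-r^2=0$; analogous identities at $n=3$ recover the explicit constants $(3n-1)/24$ and $(n-1)/2$ of the lemma, and at general $n$ the required cancellations are packaged by the generating function $(1-x)^{-r}(1+x)^{r}=\bigl(\frac{1+x}{1-x}\bigr)^{r}$ together with its $x$-weighted variants that track how integration by parts redistributes derivatives between the two factors. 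The structural claims then follow automatically: divisibility by $r$ from $\binom{\pm r}{k}\big|_{r=0}=0$ for $k\geq 1$; polynomial degree exactly $n$ in $r$ from the Cauchy product; alternating symmetry and skew-symmetry in $(u,v)$ from the definite parity of the summand at each power of $r$ under $(k,l)\leftrightarrow(l,k)$; and the initial condition $\lim_{\tau\to-\infty}K_\tau^{(r)}=K^{(0)}$ is simply the $n=0$ term.
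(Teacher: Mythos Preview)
Your approach is viable but organized differently from the paper, and your handling of the $r^{n-2}$ coefficient (and all lower ones) contains both a gap and a misconception. The paper does not expand each factor separately and then form a Cauchy product; instead it \emph{first} combines the two outer factors into a single binomial via the identity
\[
(ia-\tau)^{-r}(-ib-\tau)^{r}
=\Bigl(\frac{1+\beta/\tau}{1-\alpha/\tau}\Bigr)^{r}
=\Bigl(1+\frac{(\alpha+\beta)/\tau}{1-\alpha/\tau}\Bigr)^{r},
\qquad \alpha=ia,\ \beta=ib,
\]
and \emph{then} expands in $1/\tau$. Since every term of $(1+y)^r-1$ is divisible by $y$, every $1/\tau^n$-coefficient ($n\ge 1$) is divisible by $\alpha+\beta$. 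Under the dictionary $\alpha\leftrightarrow\partial_u$, $\beta\leftrightarrow\partial_v$, so that $\alpha+\beta\leftrightarrow\partial_w$, this makes each coefficient a total $w$-derivative acting on $A(u+w)A(v+w)$, and $\int_0^\infty dw$ collapses to the boundary value at $w=0$ at \emph{every} order and \emph{every} power of $r$, with no case-by-case verification. The explicit formulae for the three leading powers of $r$ are then simply read off from the first three Taylor coefficients of the single binomial, and the alternating symmetry in $(u,v)$ comes from the invariance of that expression under $r\mapsto -r$, $\alpha\leftrightarrow -\beta$.

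By contrast, your route hides this structure: each individual $\int_0^\infty A^{(k)}(u+w)A^{(l)}(v+w)\,dw$ is genuinely nonlocal, and you must show that the particular combinations appearing at each power of $r$ are total $w$-derivatives. For $r^n$ and $r^{n-1}$ Leibniz does this, as you correctly show. But your assertion that the $r^{n-2}$ case ``requires'' the Airy equation is wrong: the cancellation is purely combinatorial, exactly as in your own $n=2$ check $\binom{r}{2}+\binom{-r}{2}=r^2$, and follows for all $n$ and all powers of $r$ from the two-variable identity above (the paper invokes $A''=xA$ only cosmetically, to rewrite individual low-$n$ examples). Your one-variable generating function $\bigl(\tfrac{1+x}{1-x}\bigr)^r$ is not the right object---it is the diagonal specialization $\alpha=\beta$---and the phrase ``its $x$-weighted variants'' is not a proof. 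As written, neither the no-integration claim for powers of $r$ below $r^{n-1}$ nor the explicit $r^{n-2}$ formula at general $n$ is established; both follow immediately once you use the two-variable identity, which essentially reduces your argument to the paper's.
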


\medbreak

\begin{proof} In order to expand the kernel (\ref{r-kernel})
with regard to $\tau$ for $\tau\rightarrow -\iy$, set
the expressions (\ref{k-Airy}) into the kernel
(\ref{r-kernel}), which then becomes a triple integral.
Set $\al=ia$ and $\beta=ib$ and consider the following
Taylor expansions about $\tau= -\iy$, {
 \bea
{\frac{1}{(\al-\tau)^r(-\beta-\tau)^{-r}}}
 &=&\left(1+\frac{\frac{\al+\beta}{\tau} }{ 1-\frac{\al}{\tau}}\right)^r \no\\
\no\\
 &=&
  1+\sum_{n=1}^\iy \frac{1}{\tau^n}
     \sum_{j=0}^{\left[\frac{n-1}{2}\right]}
   \left(\begin{array}{l}
   (r(\al+\beta))^{n-2j}Q_{2j}(\al,\beta)\\
   +(r (\al+\beta))^{n-2j-1} (\al-\beta)\tilde
   Q_{2j}(\al,\beta)\end{array}
   \!\!\!\!\!\right)
%
 \no\\
 &=&1\!+\!\sum_{n=1}^\iy \frac{1}{\tau^n}
  \left(\!\!
  \begin{array}{l}
  \frac {(r(\al+\beta))^n}{n!}
  +
    \frac {(r (\al+\beta))^{n-1}(\al-\beta)}{2(n-2)!}
   \\
   +
    \frac {(r(\al+\beta))^{n-2}}{(n-3)!}
    \left(\frac{(3n-1)(\al+\beta)^2}{24}
     -\frac{(n-1)\al\beta}{2}\right)+\ldots
  \end{array}\!\!\!\!\!\right)
%
\label{powerseries},\eea}
where $Q_{2j}$ and $\tilde Q_{2j}$ are symmetric
homogeneous polynomials of degree $2j$ in the arguments,
since the first expression is invariant under the
involution $r\mapsto -r$ and $\al\mapsto -\beta$. The
coefficients of $1/\tau^n$ are divisible by $r$, for the
simple reason that for $r=0$, the expression above
equals $1$.
%
%

   Also notice
multiplication by $ia$ of the integrand in the kernel
(\ref{r-kernel}),
 \bean K^{(r)}_\tau(u,v)\!\!\!&=&\!\!\!\int_0^{\iy}\!\!
 dw \!\!\int_C   e^{\frac{1}{3} ia^3+ia(w+u) }
 \left(\frac{1}{ia-\tau}\right)^r
 \frac{da}{2\pi}
  \int_C ~e^{\frac{1}{3} ib^3+ib(w+v) }
 \left( {-ib-\tau}\right)^r
  \frac{db}{2\pi}\\
  \!\!\!&=&\!\!\!
  \int_0^{\iy}
 \!\!
 dw \!\!\int_C  \int_C 
  \frac{dadb}{4\pi^2}
e^{\frac{1}{3} ib^3+ib(w+v)}  e^{\frac{1}{3}
ia^3+ia(w+u) }(1+\frac{r}{\tau}(ia+ib)+\ldots)
  \eean
 can be realized by taking $\frac{\pl K^{(0)}
}{\pl u}$ and similarly multiplication of the integrand by $ib$ is realized by taking
$\frac{\pl K^{(0)} }{\pl v}$; thus we have the following
recipe
\bean 
ia &\leftrightarrow& \frac{\pl K^{(0)}
}{\pl
u}=\int_0^{\iy}dwA'(u\!+\!w)A(v\!+\!w)   \\
  ib &\leftrightarrow& \frac{\pl K^{(0)} }{\pl
v}=\int_0^{\iy}dwA(u\!+\!w)A'(v\!+\!w), \eean and so in
particular, \bean
{(ia)^{k_1}(ib)^{k_2}(ia\!+\!ib)^n}
 &\Leftrightarrow&
  \int_0^{\iy}\left(\frac{\pl}{\pl u}\!+\!\frac{\pl}{\pl v}\right)^n
 \left(\frac{\pl }{\pl u }\right)^{k_1}
 \left(\frac{\pl}{\pl v}\right)^{k_2}
  A(u\!+\!w)A(v\!+\!w)dw\\
   &&=
   -\left(\frac{\pl}{\pl
   w}\right)^{n-1}A^{(k_1)}(u+w)A^{(k_2)}(v+w)\Bigr|_{w=0}
  . \eean
Notice that, since $ia+ib$ factors out of every term in
the expansion (\ref{powerseries}), the kernels obtained
never contain integration. In addition, since $ia-ib$
factors out of every other term, every other term in
$K^{(r)}_i(u,v)$ must be skew; in particular it vanishes
for $u=v$.
 One then reads off the $K^{(r)}_i(u,v)$'s from the expansion
(\ref{powerseries}) and the recipe above, upon using
occasionally the differential equation $A''(x)=xA(x)$
for the Airy function, thus
 ending the proof of Lemma \ref{lemma:kernels}.
\end{proof}

\begin{remark} As an example, we give explicit expressions for
the first few $K_i^{(r)}(u,v)$'s:
{
\bea
&& \\
\label{Ki's} K_0^{(r)}(u,v)&=&K^{(0)}(u,v)=\frac{A(u)A'(v)-
A'(u)A(v)}{u-v}
  \no\\ 
K_1^{(r)}(u,v)&=&-r~A(u)A(v)
\no\\
K_2^{(r)}(u,v)&=&-\frac{r^2}{2}(A'(u)A(v)+A(u)A'(v))+\frac{r}{2}(u-v)K^{(0)}(u,v)
\no\\ 
K_3^{(r)}(u,v)&=&-\frac{r^3}{6}(A''(u)A(v)+2 A
'(u)A'(v)+A(u)A''(v)) +\frac{r^2}{2}(v-u)A(u)A(v)
\no\\
& &-\frac{r}{3}\left( A''(u)
A(v)+A(u)A''(v)-A'(u)A'(v)\right).
\no   
  %
\eea}
\end{remark}

In order to find the PDE for the transition probability, one will need an estimate on how the actual transition probability for the finite problem converges for $n\rg \iy$. This will be used in (\ref{6.3}).

\begin{corollary}\label{cor:estimate2}
 For $x\in {\mathbb R} $ sufficiently large, one has for some constant $C>0$,
\bea
\lefteqn{\hspace*{-2cm} \left|\log \BP^{(0,\rho_0\sqrt{\frac{n}{2}})}_{Br}\left(\mbox{all}~
x_i\left(\! \frac{1}{1\!+\!e^{-2(\tau_0\!+\!\frac{\tau}{n^{1/3}})}}\!\right)
\! \leq \!
  \frac{  \sqrt{2n}+\frac{x}{\sqrt{2}
  n^{1/6}}}{2\cosh(\tau_0\!+\! \frac{\tau}{n^{1/3}})}\right)
\right. }     \label{estimate1} \\
 &&\no
 ~~~\left.-\log \BP(\sup \mathcal {A}^{(r)}(\tau)\leq x)\right|  \leq Cn^{-1/3}.
  \eea
\end{corollary}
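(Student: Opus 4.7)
The plan is to compare the two Fredholm determinants via a trace-class estimate on their kernels. The finite-$n$ probability equals $\det(I-H_n^{(r)})$ where $H_n^{(r)}$ is the kernel (\ref{C032}) restricted to the appropriate rescaled half-line, while $\BP(\sup \mathcal{A}^{(r)}(\tau)\leq x)=\det(I-K_\tau^{(r)})_{(x,\iy)}$. Using the elementary inequality
\[
|\log\det(I-A)-\log\det(I-B)|\leq \frac{\|A-B\|_1}{1-\max(\|A\|_1,\|B\|_1)},
\]
valid whenever the trace norms are bounded away from $1$, it suffices to show that the conjugated and rescaled version $\widetilde H_n^{(r)}$ of $H_n^{(r)}$, acting on $L^2((x,\iy))$, converges to $K_\tau^{(r)}$ in trace norm at rate $n^{-1/3}$, and that for $x$ sufficiently large both trace norms are uniformly bounded below $1$.

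First I would sharpen the saddle-point analysis used in the proof of Theorem \ref{Theo1}. The Taylor expansion (\ref{F}), namely $F(\tilde z)=F(1)+\tfrac{1}{3}(\tilde z-1)^3+O((\tilde z-1)^4)$, together with $\tilde z=1+u/n^{1/3}$ and the Taylor expansion of the ratio $\bigl(\tilde z/(\tilde z-\alpha e^{\tau/n^{1/3}})\bigr)^r$ at the tangency value $\alpha=1$, gives an expansion of the form
\[
\mathcal{I}_\tau^{(n)}(u)=e^{-u\gamma}A_r^-(u,\tau)+n^{-1/3}R_n^{-}(u),
\]
where the remainder satisfies $|R_n^{-}(u)|\leq C e^{-c\, u^{3/2}}$ uniformly in $n$ for $u\geq x$ with $x$ large. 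An analogous expansion holds for $\mathcal{J}_\tau^{(n)}$. The decay comes from the steepest descent direction at the critical point $\tilde z=1$ (or $\tilde w=1$), combined with the exponential factor $e^{n^{1/3}u(\tilde z-\zeta)}$ along the descent contour.

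Second, I would promote this pointwise control to trace-norm convergence by exploiting the factorization
\[
K_\tau^{(r)}(u,v)=\int_0^{\iy} A_r^-(u+w,\tau)A_r^+(v+w,\tau)\, dw,
\]
and the identical structure for $\widetilde H_n^{(r)}$. Viewing each kernel as a product of two Hilbert--Schmidt operators between $L^2((x,\iy))$ and $L^2((0,\iy),dw)$, the Airy-type tails give $L^2$-boundedness of both factors uniformly in $n$, while the $n^{-1/3}$ pointwise remainder --- integrated against the same tails --- yields an $O(n^{-1/3})$ Hilbert--Schmidt error on each factor. The product bound $\|AB-A'B'\|_1\leq \|A-A'\|_2\|B\|_2+\|A'\|_2\|B-B'\|_2$ then delivers $\|\widetilde H_n^{(r)}-K_\tau^{(r)}\|_1\leq C n^{-1/3}$. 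Since for $x$ large both determinants are close to $1$, the log-comparison inequality finishes the proof.

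The principal obstacle is making the saddle-point remainder rigorous uniformly in the spectral variable $u\in(x,\iy)$. One must choose the descent contour $\mathcal{D}$ (and the vertical contour $\Gamma_L$) so that $\Re\bigl[n(F(\tilde z)-F(1))-n^{1/3}u(\tilde z-\zeta)\bigr]$ has a quantitative gap from its maximum throughout the non-critical portion, so that those tails contribute an error exponentially small in both $n^{1/3}$ and $u$ and can be absorbed into the Airy decay. Once this uniform control is achieved --- this is essentially the work already done rigorously in \cite{BBP} and \cite{Peche} --- the $n^{-1/3}$ rate follows by tracking the next order in the Taylor expansions of $F$ and of the external-source factor.
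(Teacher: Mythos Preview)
Your proposal is correct and follows essentially the same route as the paper: reduce the log-determinant comparison to a trace-norm bound via $\|K_n-K_\infty\|_1\leq \|A-A'\|_2\|B\|_2+\|A'\|_2\|B-B'\|_2$ for the Hilbert--Schmidt factors coming from the $dw$-integral, and feed in the pointwise $O(n^{-1/3})$ saddle-point error on $\mathcal I_\tau^{(n)}$ and $\mathcal J_\tau^{(n)}$ borrowed from \cite{BBP,Peche}. The only cosmetic differences are that the paper uses the operator norm rather than the trace norm in the denominator of the log-det inequality, and quotes the cruder decay $Ce^{-cx}$ rather than your $Ce^{-cu^{3/2}}$; both are harmless for $x$ large.
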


\begin{proof}
For any trace class operators $K_n$ and $K_\iy$, set
\footnote{Define the three norms on a Hilbert space: the sup, the trace and the Hilbert-Schmidt norms, with $ \vv T \vv \leq  \vv T \vv_{_{HS}} \leq  \vv T \vv_{_{\tr}}$ :
$$\vv T \vv=\sup_{v \in \mathcal {H}}\frac{|Tv|}{|v|}
=\sup |\lb_i|,~
 \vv T \vv_{_{\tr}}=\Tr (T^*T)^{1/2}=
  \sum |\lb_i|,~ \vv T \vv_{_{HS}}=
  (\Tr T^*T)^{1/2}=(\sum |\lb_i|^2)^{1/2}.$$}
  $$
 M:= \max ( \vv K_n  \vv, \vv K_\iy  \vv).
  $$
  Then assuming $M<1$,
  one checks, using $K_n^i-K_\iy^i=\sum_{\ell=0}^{i-1}
   K_\iy^\ell (K_n-K_\iy)K_n^{i-1-\ell}$ and $\vv AB\vv_{_{\tr}}\leq \vv A\vv~ \vv B\vv_{_{\tr}}$ and $\vv AB\vv_{_{\tr}}\leq\vv A\vv_{_{\tr}} \vv B\vv
   $,
\bea
  \label{estimate} \hspace*{.4cm}\left| \log \det (I-K_n)-\log \det (I-K_\iy)\right|
  &=&
    \left| \tr(\log (I-K_n)-\log  (I-K_\iy))\right|
   \\
     &\leq&
   \sum_1^{\iy}
       \left|
        \tr(\frac{K_n^i-K_\iy^i}{i})
        \right|
        \no\\
        &\leq&
        \sum_1^{\iy} \frac{1}{i}\vv
      K_n^i-K_\iy^i    \vv_{_{\tr}}
      \no\\
       &\leq&
      \vv
      K_n-K_\iy   \vv_{_{\tr}}
      \sum_0^{\iy}
       M^i
       \no\\
        &\leq&
         \frac{\vv
     K_n-K_\iy   \vv_{_{\tr}}}{1-M}.
    \no
\eea
Setting (see notation (\ref{Integrals}))
\bean
 K_n(u,v)&:=&  \int_0^\iy dy \mathcal{I}_\tau^{(n)} (u+y)
                      \mathcal{J}_\tau^{(n)} (v+y)\\
  K_\iy(u,v)&:=&
                       e^{(v-u)\gamma} \int_0^\iy dy A_r^-(u+y,\tau)  A_r^+(v+y,\tau)
            ,  \eean
one checks
\bean
\lefteqn{\left |
K_n(u,v)-K_\iy(u,v) \right |
}\\
&\leq &
 \left |
 \int_0^\iy dy \Bigl(\mathcal{I}_\tau^{(n)} (u+y)-e^{-(u+y)\gamma} A_r^-(u+y,\tau)\Bigr)
   \mathcal{J}_\tau^{(n)} (v+y)\right |
 \\
&&+\left|\int_0^\iy dy e^{-(u+y)\gamma}A_r^-(u+y,\tau)
 \Bigl(\mathcal{J}_\tau^{(n)} (v+y)-e^{(v+y)\gamma} A_r^+(v+y,\tau)\Bigr)\right|
.\eean
  An argument similar to the one of Baik-BenArous-P\'ech\'e \cite{BBP,Peche} shows that
for given $x_0\in {\mathbb R}$, there are constants $C>0,~N>0$,
\bean
 \left | \mathcal{I}_\tau^{(n)} (x)-e^{-x\gamma} A_r^-(x,\tau) \right |
  &\leq& \frac{Ce^{-cx}}{n^{1/3}} \\
  \left | \mathcal{J}_\tau^{(n)} (x)-e^{x\gamma} A_r^+(x,\tau) \right |
  &\leq& \frac{Ce^{-cx}}{n^{1/3}} ~,~~\mbox{for $n\geq N$ and $x\geq x_0$}.
     \eean
%
 Viewing the functions in the integrals on the right hand side
 as kernels representing H\"ankel-like
integral operators on $(x_0,\iy)$, one has, using the
inequality $\vv AB  \vv_{_{\tr}}\leq \vv A
\vv_{_{HS}} \vv B \vv_{_{HS}}$ for
Hilbert-Schmidt operators,
  \bean
   \vv K_n-K_\iy  \vv_{_{\tr}}
    &\leq &
    \vv   \mathcal{I}_\tau^{(n)} (u+y)-e^{-(u+y)\gamma} A_r^-(u+y,\tau)    \vv_{_{HS}}   \vv     \mathcal{J}_\tau^{(n)} (v+y)       \vv_{_{HS}}
  \\  \\ && +
     \vv   e^{-(u+y)\gamma}A_r^-(u+y,\tau)     \vv_{_{HS}}
      \vv    \mathcal{J}_\tau^{(n)} (v+y)-e^{(v+y)\gamma} A_r^+(v+y,\tau)       \vv_{_{HS}}
  \\
  &\leq & 2C'n^{-1/3},
      \eean
     where $$C'= \max (\vv  e^{-(u+y)\gamma}A_r^-(u+y,\tau)   \vv_{_{HS}}, \sup_{n\geq N}\vv    \mathcal{J}_\tau^{(n)} (v+y)    \vv_{_{HS}})Ce^{-cx_0}.$$

     Then, from (\ref{estimate}), one has that
      $$\left| \log \det (I-K_n)-\log \det (I-K_\iy)\right|\leq \frac{2C'}{1-M} n^{-1/3}
      .$$
   Remembering the representation of the probabilities in the statement (\ref{estimate1}) in terms of Fredholm determinants
   establishes Corollary \ref{cor:estimate2}.\end{proof}






\section{An integrable deformation of Gaussian random
ensemble with external source and 3-component
KP}\label{sect:KP}

The connection between the Gaussian random ensemble with
external source and the multi-component KP hierarchy is
explained in \cite{AvM-Pearcey} and \cite{AvM-Mops}. The
main ideas are sketched in this section. For the multicomponent KP hierarchy, see [30].
%
%
\subsection{Two sets of weights and the $p+q$-KP
hierarchy} Define two sets of weights
$$
 \psi_1(x),\ldots,\psi_q(x)~~~\mbox{and}~~~
 \varphi_1(y),\ldots,\varphi_p(y),~~\mbox{with}~x,y\in
 {\mathbb R},
$$
and deformed weights depending on {\it time} parameters
$s_\al=(s_{\al 1},s_{\al 2},\ldots)$ ($1\leq \al\leq q$)
and $t_\beta=(t_{\beta 1},t_{\beta 2},\ldots)$ ($1\leq
\beta \leq p$), denoted by
\be
  \psi_\al^{-s}(x):=\psi_\al(x)e^{-\sum_{k=1}^\iy s_{\al k}x^k}
   ~~~~~\mbox{and}~~~~~
  \varphi_\beta^{t}(y):=\varphi_\beta (y)e^{\sum_{k=1}^\iy t_{\beta
   k}y^k}.
\label{weights}\ee
That is, each weight goes with its own set of times. For
each set of positive integers\footnote{$\vert m
\vert=\sum_{\al=1}^{q}m_\al$ and $\vert
n\vert=\sum_{\beta=1}^{p}n_\beta$.}
$
  m=(m_1,\ldots,m_q),~n=(n_1,\ldots,n_p)\mbox{~ with~}\vert m\vert=\vert n\vert,
$
consider the determinant of a moment matrix $T_{mn}$ of
size $\vert m\vert=\vert n\vert$, composed of $pq$
blocks of sizes $m_in_j$; the moments are taken with
regard to a (not necessarily symmetric) inner product
$\INN$
\bea
  \label{Tmn_def}  \lefteqn{\hspace*{.5cm}\tau_{mn}(s_1,\ldots,s_q;t_1,\ldots,t_p)}
  \hskip1cm \\ 
   &&  \hspace*{-1.5cm}
  := \det\left(\!\!\!\!\begin{array}{ccc}
  \Bigl(\!
  \inn{x^i\psi^{-s}_1(x)} {y^j\varphi^{t}_1(y) }
  \Bigr)_{0\leq i<m_1\atop{0\leq j<n_1}}&\ldots&\left(
  \inn{x^i\psi^{-s}_1(x)} {y^j\varphi^{t}_p(y) }
  \!\right)_{0\leq i<m_1\atop{0\leq j<n_p}}\\
  \vdots& &\vdots\\
  & &\\
  \left(\!\inn{x^i\psi^{-s}_q(x)}
  {y^j\varphi^{t}_1(y) } \right)_{0\leq i<m_q\atop{0\leq j<n_1}}&\ldots&\left(\inn{x^i\psi^{-s}_q(x)}
  {y^j\varphi^{t}_p(y)}\! \right)_
  {0\leq i<m_q\atop{0\leq j<n_p}}
  \end{array}\!\!\!\!\right).
 \no
\eea


We now state a non-trivial Theorem involving a relationship between the determinants of the block moment matrices above, by increasing or decreasing the sizes of the blocks by one. Modifying the size $n_\beta$ in $n=(n_1,\ldots, n_p)$ by $1$ is indicated by $n\mapsto n\pm e_\beta$, where $e_\beta=(0,\ldots,0, 1,0,\ldots,0)$, with $1$ at place $\beta$. The proof and many simple examples can be found in \cite{AvM-Mops}:

\begin{theorem}\label{Theo:p+q-KP}{\em (Adler, van Moerbeke and Vanhaecke \cite{AvM-Mops})}
Then the block matrices $\tau_{mn}$ satisfy the $(p+q)$-KP hierarchy; to be precise, the functions $\tau_{mn}$ satisfy the bilinear relations\footnote{The integrals are contour integrals along a small circle about $\iy$, with formal Laurent series as the integrand. Also, for $z \in \BC$, we define $[z^{-1}]:= (\frac{z^{-1}}{1},\frac{z^{-2}}{2},\frac{z^{-3}}{3},\ldots)$. For a given polynomial $p(t_1,t_2,\dots)$, the Hirota symbol between functions $f=f(t_1,t_2,\ldots)$ and
$g=g(t_1,t_2,\ldots)$ is defined by
$
  p(\frac{\pl}{\pl t_1},\frac{\pl}{\pl t_2},\dots)f\circ g:=
  p(\frac{\pl}{\pl y_1},\frac{\pl}{\pl y_2},\dots)
  f(t+y)g(t-y)\Bigl|_{y=0}.
$
We also need the elementary Schur polynomials
${\gs}_{\ell}$, defined by
$e^{\sum^{\iy}_{1}t_kz^k}:=\sum_{k\geq 0} {
\gs}_k(t)z^k$ for $\ell\geq 0$ and ${\gs}_{\ell}(t)=0$
for $\ell<0$; moreover, set
$
  \gs_{\ell}(\tilde \pl_t):={\gs}_{\ell}(\frac{\pl}{\pl t_1},
  \frac{1}{2}\frac{\pl}{\pl t_2},\frac{1}{3}\frac{\pl}{\pl t_3},
  \ldots).
$
 }
    \be \label{tau-bilinear identity}\ee
   \vspace*{-.7cm}
      \bean
  \sum_{\beta=1}^p\oint_\infty
  (-1)^{\sigma_\beta(n)} \tau_{m,n-e_{\beta}}
    (t_{\beta}-[z^{-1}])\tau_{m^*,n^*+e_{\beta}}(t^*_{\beta}+[z^{-1}])
          e^{\scriptscriptstyle\sum_1^{\iy} (t_{\beta k}-t_{\beta k}^*)z^k}z^{n_\beta-n_\beta^*-2}\,dz=\nonumber\\
  \sum_{\al=1}^q\oint_\infty
 (-1)^{\sigma_\al(m)}\,\tau_{m+e_\al,n}(s_{\al}-
  [z^{-1}])\tau_{m^*-e_\al
    ,n^*}(s^*_{\al}+[z^{-1}])
    e^{\sum_1^{\iy} (s_{\al k}-s_{\al k}^*)z^k}\,
    z^{m_{\al}^*-m_\al-2}\,dz,
  \eean
  for all $m,n,m^*,n^*$ such that
  $\vert m^*\vert=\vert n^*\vert+1$
and $\vert m\vert=\vert
  n\vert-1$ and all $s,t,s^*,t^*
\in \BC^{\iy}$ and where
   $
     \sigma_\al(m)={\sum_{\al'=1}^\al (m_{\al'}
-m_{\al'}^*)}\quad\hbox{and}\quad
     \sigma_\beta(n)={\sum_{\beta'=1}^\beta(n_{\beta'}-n_{\beta'}^*)}.
   $%
   \end{theorem}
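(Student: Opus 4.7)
The plan is to reduce the $(p+q)$-component bilinear identity to a classical Plücker/residue computation after realizing $\tau_{mn}$ as a multi-weighted moment determinant and expanding it by Cauchy--Binet. The key conceptual point is that both sums of contour integrals in the identity should compute two different Laplace expansions of a single larger block determinant, and so must coincide up to a sign.

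First, I would rewrite the weight shifts via the generating identities
$$e^{-\sum_1^\iy \frac{y^k}{kz^k}}=1-\frac{y}{z},\qquad e^{\sum_1^\iy \frac{y^k}{kz^k}}=\frac{1}{1-y/z}.$$
These imply that $\tau_{m,n-e_\beta}(t_\beta-[z^{-1}])$ is the block moment determinant in which $\vp^t_\beta(y)$ has been replaced by $\vp^t_\beta(y)(1-y/z)$, while $\tau_{m^*,n^*+e_\beta}(t^*_\beta+[z^{-1}])$ is the one in which $\vp^{t^*}_\beta(y)$ has been divided by $(1-y/z)$; analogous statements hold on the $s$-side. Second, I would apply Cauchy--Binet blockwise to express $\tau_{mn}$ as an integral
$$\tau_{mn}=\int \prod_\al \Bigl(\Dt_{m_\al}(x^{(\al)})\!\prod_{i}\psi^{-s}_\al(x^{(\al)}_i)\Bigr)\!\prod_\beta \Bigl(\Dt_{n_\beta}(y^{(\beta)})\!\prod_{j}\vp^{t}_\beta(y^{(\beta)}_j)\Bigr) K(x,y)\,dxdy,$$
with $K(x,y)$ a Schur--Cauchy pairing kernel; in this multi-particle language each $\beta$-shift simply multiplies the integrand by $\prod_j(1-y^{(\beta)}_j/z)^{\pm 1}$. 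Third, introducing the wave and adjoint wave functions
$$\Psi^{(\beta)}_{m,n}(t;z):=z^{n_\beta-1}\frac{\tau_{m,n-e_\beta}(t_\beta-[z^{-1}])}{\tau_{mn}(t)}e^{\sum_1^\iy t_{\beta k}z^k},$$
and analogously $\Psi^{*(\beta)}_{m^*,n^*}$, $\Phi^{(\al)}_{m,n}$, $\Phi^{*(\al)}_{m^*,n^*}$, the two sides of the identity become $\sum_\beta(-1)^{\sigma_\beta(n)}\oint_\iy \Psi^{(\beta)}\Psi^{*(\beta)}\,dz$ and $\sum_\al(-1)^{\sigma_\al(m)}\oint_\iy \Phi^{(\al)}\Phi^{*(\al)}\,dz$. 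After inserting the Cauchy--Binet expansion, each sum is an expansion of one and the same ``master'' determinant built out of the two moment matrices: the $\beta$-sum is its Laplace expansion along the block column labelled by $\beta$ and the $\al$-sum is its Laplace expansion along the block row labelled by $\al$. That these two expansions agree is the multi-component Plücker relation, and the residue at $z=\iy$ picks out exactly the contracted piece.

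The main obstacle will be the sign bookkeeping. The signs $(-1)^{\sigma_\beta(n)}$ and $(-1)^{\sigma_\al(m)}$ arise because adding or removing one row/column from block $\beta$ (resp.\ $\al$) forces its commutation past the preceding blocks of total sizes $\sum_{\beta'\leq\beta}(n_{\beta'}-n^*_{\beta'})$ and $\sum_{\al'\leq\al}(m_{\al'}-m^*_{\al'})$. To pin the conventions down, I would first treat the baseline cases $(p,q)=(1,1)$, where the identity reduces to the classical Sato--KP bilinear identity, and $(p,q)=(2,1)$, where it reduces to the two-Toda identity, checking the signs by direct expansion. I would then promote to general $p,q$ by induction on the block labels, each inductive step consisting of a single block transposition whose signature is precisely the incremental change in $\sigma_\beta$ or $\sigma_\al$. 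Once signs are controlled, the identity is the Plücker relation applied to the master block determinant, and the use of the elementary Schur polynomials $\gs_\ell(\tp_t)$ encodes the Taylor expansion in $z^{-1}$ of the shifted $\tau$'s, showing that all equations in the hierarchy follow simultaneously from the single bilinear identity.
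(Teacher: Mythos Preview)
The paper does not actually prove this theorem: immediately before the statement it says ``The proof and many simple examples can be found in \cite{AvM-Mops}'', and after the statement it only records the PDE consequences (\ref{introa}). So there is no in-paper proof to compare against; the result is quoted from Adler--van Moerbeke--Vanhaecke.

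Your outline is in the right spirit and broadly matches the strategy of that reference: realize the time-shifts $t_\beta\mapsto t_\beta-[z^{-1}]$ as multiplication of the corresponding weight by $(1-y/z)$, pass to wave and adjoint wave functions, and reduce the bilinear identity to a Pl\"ucker-type relation for a large block determinant. That said, two points in your sketch are genuinely incomplete rather than merely compressed. First, the ``master determinant'' whose two Laplace expansions you invoke is never written down; in the multi-component setting this object lives on an infinite Grassmannian (or, equivalently, in a multi-component fermionic Fock space), and identifying it concretely is most of the work --- the Cauchy--Binet integral you wrote, with an unspecified ``Schur--Cauchy pairing kernel'' $K(x,y)$, does not yet pin it down. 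Second, your proposed sign argument (check $(p,q)=(1,1)$ and $(2,1)$ by hand, then ``promote by induction on block labels'') is not an actual induction: adding a new block changes the moment matrix structurally, not just by a transposition, so you would still have to redo the Pl\"ucker computation at each step. In \cite{AvM-Mops} the signs $(-1)^{\sigma_\alpha(m)}$, $(-1)^{\sigma_\beta(n)}$ come out of a single global calculation (via the multi-component vertex-operator/boson--fermion formalism or an equivalent determinantal identity), not from a case-by-case bootstrap. If you want a self-contained argument, you should either set up the multi-component free-fermion picture and derive the identity from $\sum_\gamma \psi^{(\gamma)}(z)\otimes\psi^{*(\gamma)}(z)$ commuting with the group element, or write down explicitly the $(\vert m\vert+1)\times(\vert n\vert+1)$ bordered moment matrix whose two Laplace expansions give the two sides.
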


Computing the residues in the contour integrals above,
the functions $\tau_{mn}$, with $\vert m\vert=\vert
n\vert$, satisfy the following PDE's 
in terms of the Hirota symbol, defined in the footnote 11:
%
{
 \bea
  \tau_{mn}^2\frac{\pl^2}{\pl t_{\beta,\ell+1}\pl t_{\beta',1}}
  \log \tau_{mn}
  &=&
  \gs_{\ell+2\delta_{\beta\beta'}}\bigl(\tilde\pl_{t_{\beta}}
  \bigr) \tau_{m,n+e_{\beta}-e_{\beta'}}
  \circ\tau_{m,n+e_{\beta'}-e_{\beta}}  \nonumber\\
  \tau_{mn}^2 \frac{\pl^2}{\pl s_{\al,\ell+1}\pl
  s_{\al',1}}\log
  \tau_{mn}
  &=&\gs_{\ell+2\delta_{\al\al'}}(\tilde\partial_{s_{\al}})
     \tau_{m+e_{\al'}-e_{\al},n}
     \circ\tau_{m+e_{\al}-e_{\al'},n} 
\no\\ \label{introa}%
  \tau_{mn}^2 \frac{\pl^2}{\pl s_{\al,1}\pl t_{\beta,\ell+1}}
  \log \tau_{mn}
  &=&-\gs_{\ell}(\tilde\partial_{t_{\beta}})\tau_{m+e_{\al},
  n+e_\beta}\circ\tau_{m-e_{\al},n-e_{\beta}}\nonumber\\
  \tau_{mn}^2\frac{\pl^2}{\pl t_{\beta,1}\pl s_{\al,\ell+1}}
  \log \tau_{mn}
  &=&-\gs_{\ell}(\tilde\partial_{s_{\al}})
  \tau_{m-e_{\al},n-e_\beta}\circ\tau_{m+e_{\al},n+e_{\beta}}.
\eea}
%
%
\subsection{Gaussian ensemble with external source}
Consider an ensemble of $n\times n$ Hermitian matrices
with an external source, given by a diagonal matrix
 $
 A=\diag(a_1,\ldots,a_n)
  $
 and a general potential $V(z)$, with density
  $$
  \BP_n(M\in [M,M+dM])=
  \frac{1}{Z_{n}}
  e^{-\Tr(V(M)-AM) }dM.
  $$
For a subset $E\subset {\mathbb R}$, 
%
the following probability can be transformed by the
Harish-Chandra-Itzykson-Zuber formula, with
$D:=\diag(z_1,\ldots,z_n$), $$\Dt_n(z):=\prod_{1\leq
i<j\leq n}(z_i-z_j),$$ and all distinct $a_i$,
 \bea
 \label{1.1}  \BP_n(\mbox{spectrum} ~M\subset E)
 &=&\frac{1}{Z_n} \int_{\mathcal{H}_{n}(E)} e^{-\Tr(V(M)-AM) }dM
  \\
   && \no\\
  &=&\frac{1}{Z_n} \int_{E^n} \Dt^2_n(z)\prod_1^n
e^{-V(z_i)}dz_i\int_{U(n) }e^{\Tr AUD
U^{-1}}dU    \no\\
&=&\frac{1}{Z'_n} \int_{E^n} \Dt^2_n(z)\prod_1^n
e^{-V(z_i)}dz_i\frac{\det[e^{a_iz_j}]_{1\leq i,j\leq
n}}{\Dt_n(z)\Dt_n(a)}  \no\\
  &=&\frac{1}{Z''_n}
\int_{E^n}\Dt_n(z)\det[e^{-V(z_j)+a_iz_j}]_{1\leq
i,j\leq n}\prod_1^n dz_i,
\no\eea
%
with $a_i\neq a_j$ and the Vandermonde
$\Dt_n(z)=\prod_{1\leq i<j\leq n}(z_i-z_j)$. The formula
remains valid in the limit, when some $a_i$'s coincide,
upon making differences of rows and dividing by the
appropriate $(a_i-a_j)$'s. In the following Proposition,
we consider a general situation, of which (\ref{1.1})
with $A=\diag (a,\ldots,a,0,\ldots,0)$ is a special
case, by setting $\varphi^+=e^{az}$ and $\varphi^-=1$.
 Consider the Vandermonde determinant 
 $\Dt_n(x,y)
 :=\Dt_n(x_1,\ldots,x_{k_1}
 ,y_1,\ldots, y_{k_2}). $
Then we have the following (see \cite{AvM-Pearcey}):

\begin{proposition}
Given an arbitrary potential $V(z)$ and arbitrary
functions
 $\varphi^+(z)$ and $\varphi^-(z)$, define
 ($n=k_1+k_2$)
\bean
(\rho_1,\ldots,\rho_n)&:=&e^{-V(z)}\left(\varphi^+(z),z\varphi^+(z)
,\ldots,   z^{k_1-1}\varphi^+(z),  \right.
 \\
 &&\hspace*{2cm} \left.\right.
\left.~\varphi^-(z),
~z\varphi^-(z),\ldots,z^{k_2-1}\varphi^-(z)\right)
 .
 \eean
 We have
 \bea
 \label{1.3}\lefteqn{\hspace*{.6cm}\frac{1}{n!}\int_{E^n}\Dt_n(z) \det(\rho_i(z_j))_{1\leq i,j\leq
n}
   \prod_1^n
    dz_i}
     \\
&=&\frac{ 1}{k_1!k_2!}\int_{E^n}\Dt_n(x,y)
\Dt_{k_1}(x)\Dt_{k_2}(y)\prod_1^{k_1}\varphi^+(x_i)
e^{-V(x_i)}dx_i
\prod_1^{k_2}\varphi^-(y_i)e^{-V(y_i)}dy_i
\no\\
&=& \det\left(\begin{array}{c}
\left(\displaystyle{\int_{E}}z^{i+j}\vp^+(z)e^{-V(z)}\right)_{{\begin{array}{l}
     0\leq i\leq k_1-1\\
     0\leq j\leq k_1+k_2-1
     \end{array}}} \\
  \\
\left(\displaystyle{\int_{E}}z^{i+j}\vp^-(z)e^{-V(z)}\right)_{{\begin{array}{l}
     0\leq i\leq k_2-1\\
     0\leq j\leq k_1+k_2-1
     \end{array}}}
\end{array}\right)
 .\no
 \eea

\end{proposition}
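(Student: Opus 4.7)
The plan is to establish the two equalities separately, the first by a Laplace expansion plus a symmetry argument, and the second (or rather, directly the first to the third expression) by the classical Andréief identity.

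For the equality of the first expression with the last one (the block matrix determinant), I would apply the Andréief/Heine identity
$$
 \frac{1}{n!}\int_{E^n}\det\bigl(f_i(z_j)\bigr)_{i,j=1}^n\det\bigl(g_i(z_j)\bigr)_{i,j=1}^n\prod_{1}^{n}dz_j
 =\det\Bigl(\int_E f_i(z)g_j(z)\,dz\Bigr)_{i,j=1}^n,
$$
which is proved in a few lines by expanding both determinants over permutations and integrating. I would apply it with $f_i(z):=z^{i-1}$, so that $\det(f_i(z_j))=\Delta_n(z)$, and $g_i:=\rho_i$. The resulting $n\times n$ matrix $\bigl(\int_E z^{i-1}\rho_j(z)\,dz\bigr)_{i,j}$ splits into two column-blocks: for $j\leq k_1$ the entry is $\int z^{i+j-2}\varphi^+(z)e^{-V(z)}\,dz$, and for $j>k_1$ it is $\int z^{i+j-k_1-2}\varphi^-(z)e^{-V(z)}\,dz$. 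Taking the transpose of the square matrix (which preserves its determinant) and reindexing gives exactly the block determinant in the statement.

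For the equality of the first expression with the middle one, I would expand $\det(\rho_i(z_j))$ by Laplace expansion along its first $k_1$ rows. Since rows $1,\dots,k_1$ read $z^{i-1}\varphi^+(z)e^{-V(z)}$ and rows $k_1+1,\dots,n$ read $z^{i-1}\varphi^-(z)e^{-V(z)}$, each $k_1\times k_1$ minor indexed by a subset $S=\{s_1<\cdots<s_{k_1}\}$ has pure Vandermonde structure, equal to
$$
 \Delta_{k_1}(z_S)\prod_{s\in S}\varphi^+(z_s)e^{-V(z_s)},
$$
and the complementary $k_2\times k_2$ minor equals $\Delta_{k_2}(z_{S^c})\prod_{s'\in S^c}\varphi^-(z_{s'})e^{-V(z_{s'})}$. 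Consequently
$$
 \det(\rho_i(z_j))=\sum_{|S|=k_1}\epsilon_S\,\Delta_{k_1}(z_S)\Delta_{k_2}(z_{S^c})\prod_{s\in S}\varphi^+(z_s)e^{-V(z_s)}\prod_{s'\in S^c}\varphi^-(z_{s'})e^{-V(z_{s'})},
$$
with $\epsilon_S$ the Laplace (shuffle) sign. For each $S$ I would relabel $x_i:=z_{s_i}$ and $y_j:=z_{s'_j}$; under this relabeling, $\Delta_n(z)$ becomes $\epsilon_S\,\Delta_n(x,y)$, where $\epsilon_S$ is the sign of the very same shuffle permutation. The two $\epsilon_S$ factors cancel, so every one of the $\binom{n}{k_1}$ subsets contributes an identical integral in $(x,y)$. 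Since $\binom{n}{k_1}/n!=1/(k_1!k_2!)$, this produces the middle expression.

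The only point that requires care is the sign matching between the Laplace expansion sign and the sign picked up by $\Delta_n(z)$ under the reordering $(1,\dots,n)\mapsto(s_1,\dots,s_{k_1},s'_1,\dots,s'_{k_2})$; since both are the sign of the same shuffle permutation, they cancel automatically, and no other step presents a real obstacle.
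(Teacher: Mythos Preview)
Your proof is correct. The paper does not actually give a proof of this proposition here; it simply states the result and refers to \cite{AvM-Pearcey}. Your argument---Andr\'eief's identity for the equality with the block moment matrix, and a Laplace expansion along the first $k_1$ rows of $\det(\rho_i(z_j))$ combined with the relabeling $(z_{s_1},\dots,z_{s_{k_1}},z_{s'_1},\dots,z_{s'_{k_2}})\mapsto(x,y)$ for the equality with the double-Vandermonde integral---is exactly the standard route, and the sign bookkeeping you flag (the Laplace shuffle sign cancelling the sign picked up by $\Delta_n(z)$ under the same shuffle) is the only delicate point, which you handle correctly.
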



\subsection {Adding extra-variables $t,~s~u,~ \mbox{and}~ \beta$}

We add the extra-variables $t=(t_1,t_2,\ldots),~~
                       s=(s_1,s_2,\ldots),~~
                       u=(u_1,u_2,\ldots)~~\mbox{and}~~
                       \beta$ in the
exponentials, as follows (
$n=k_1+k_2$),
\bea
   V(z):=\frac{z^2}{2}-\sum_1^{\iy}t_i z^i  \hspace*{2cm}
  \no\\
   \varphi^+(z)= e^{ az+\beta z^2
  -\sum_1^{\iy}s_iz^i},
   ~~~~~~~\varphi^-(z)= e^{
  -\sum_1^{\iy}u_iz^i}.\label{extra-var}
 \eea
 The determinant of the moment matrix (\ref{Tmn_def}) with regard to
  the inner-product
  $\la f,g\ra=\int_E f(z)g(z)e^{-z^2/2}dz$, with $p=1$,
  $q=2$, $n_1=k_1+k_2$, $m_1=k_1$, $m_2=k_2$, and
   $\varphi_1(x)=1,~~\psi_1(y)=e^{ay+\beta y^2},
   ~~\psi_2(y)=1 $
   is the same as the determinant
   (\ref{1.3}), with the expressions $V(z)$ and
   $\varphi^{\pm}(z)$ as in (\ref{extra-var}), and setting
    $s_{1i}:=s_i$, $s_{2i}=u_i$,
   $t_{1i}=t_i$. Therefore by virtue of Theorem
   \ref{Theo:p+q-KP}, the expression below satisfies
   the 3-KP hierarchy, since $p+q=3$, namely,
%
%
%
%
%
   \bea
    {\tau_{k_1k_2}(t,s,u;\al,\beta;E)}
    &:=&
   \det
    \left( \!\!\begin{array}{c}
\!
\left(\displaystyle{\int_{E}}z^{i+j}e^{-\frac{z^2}{2}+\al
z+\beta
 z^2}e^{\sum_1^{\iy}(t_k-s_k)z^k}dz\!\!
  \right)_{\!\!\scriptsize{\begin{array}{l}
      0\leq i\leq k_1-1\\
      0\leq j\leq k_1\!+\!k_2\!-\!1
      \end{array}}}
  \\
 \!\!\!\left(\displaystyle{\int_{E}}z^{i+j}e^{-\frac{z^2}{2}
 }
 e^{\sum_1^{\iy}(t_k-u_k)z^k}dz
 \right)_{\scriptsize{\begin{array}{l}
       0\leq i\leq k_2-1\\
      0\leq j\leq k_1 + k_2 - 1
      \end{array}}}
 \end{array}\!\!\!\!\!\!\right)
%
 \no\\
 \label{1.5}  &=&\frac{1}{k_1!k_2!}
  \int_{E^{n }}\Dt_{n }(x,y)
  \prod_{j=1}^{k_1}e^{\sum_1^{\iy}t_i x_j^i  }
   \prod_{j=1}^{k_2}e^{\sum_1^{\iy}t_i y^i_j  }
  \\
& &~~~  \left(\Dt_{k_1}(x)\prod^{k_1}_{j=1}
  e^{-\frac{x_j^2}{2}+\al x_j+\beta
x_j^2 }
e^{-\sum_1^{\iy}s_ix^i_j}dx_j\right)
 \no\\ &&~~~
\left(\Dt_{k_2}(y)    \prod^{k_2}_{j=1}
e^{-\frac{y_j^2}{2}
} e^{-\sum_1^{\iy}u_iy^i_j}dy_j\right)
  \no.~~~~~\eea
%

\begin{corollary}\label{bilinear-identity}
%

The functions $\tau_{k_1k_2}(t,s,u)$ satisfy the
identities
%
%
%
%
 {\begin{equation}
  \frac{\partial}{\partial
t_{1}} \log \frac{\tau_{k_{1}+1, k_{2}}}{\tau_{k_{1}-1,
k_{2}}} \ = \frac{  \frac{\partial^{ 2}}{\partial t_{2}
\partial s_{1}}  \log  \tau_{k_{1},  k_{2}}
 }{  \frac{\partial^{ 2}}{\partial t_{1}
\partial s_{1}}   \log  \tau_{k_{1}, \,k_{2}}  },~~~
%
-  \frac{\partial}{\partial s_{1}} \log
\frac{\tau_{k_{1}+1, \,k_{2}}}{\tau_{k_{1}-1, \,k_{2}}}
   =  \frac{  \frac{\partial^{ 2}}{\partial t_{1}
\partial s_{2}}  \log  \tau_{k_{1},  k_{2}}  }{
\frac{\partial^{ 2}}{\partial t_{1}
\partial s_{1}}   \log  \tau_{k_{1}, k_{2}} }
 \label{C008}
 \end{equation}
 \begin{equation}
\frac{\partial}{\partial t_{1}} \log \frac{\tau_{k_{1},
k_{2}+1}}{\tau_{k_{1},  k_{2}-1}}
  = \frac{  \frac{\partial^{ 2}}{\partial t_{2}
\partial u_{1}}  \log  \tau_{k_{1},  k_{2}}
 }{  \frac{\partial^{ 2}}{\partial t_{1}
\partial u_{1}}  \log  \tau_{k_{1},  k_{2}}  }
%
,~~~~- \frac{\partial}{\partial u_{1}} \log
\frac{\tau_{k_{1}, k_{2}+1}}{\tau_{k_{1},  k_{2}-1}}
  = \frac{  \frac{\partial^{2}}{\partial t_{1}
\partial u_{2}}  \log  \tau_{k_{1},  k_{2}}  }{
\frac{\partial^{ 2}}{\partial t_{1}
\partial u_{1}}   \log  \tau_{k_{1},  k_{2}}  } .\label{C010}
\end{equation}
}

\end{corollary}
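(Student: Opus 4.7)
The plan is to derive the four identities (\ref{C008})--(\ref{C010}) directly from the PDE system (\ref{introa}) specialized to $p=1$, $q=2$. Under the identification $m=(k_1,k_2)$, $n=(k_1+k_2)$, $s_{1,i}=s_i$, $s_{2,i}=u_i$, $t_{1,i}=t_i$, the block-moment determinant (\ref{1.5}) is precisely the $\tau_{mn}$ of Theorem \ref{Theo:p+q-KP}, so each of the four PDEs in (\ref{introa}) applies verbatim, with $\beta=1$ being the only choice and $\alpha\in\{1,2\}$.

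For the first identity of (\ref{C008}), I would take the third equation of (\ref{introa}) with $\alpha=1$ at the two consecutive values $\ell=0$ and $\ell=1$. At $\ell=0$, since $\gs_0\equiv 1$, the right-hand side collapses to the product $-\tau_{k_1+1,k_2}\,\tau_{k_1-1,k_2}$; at $\ell=1$, since $\gs_1(\tilde\pl_{t_1})=\pl/\pl t_1$, it becomes the Hirota bracket $-\bigl((\pl_{t_1}\tau_{k_1+1,k_2})\,\tau_{k_1-1,k_2}-\tau_{k_1+1,k_2}\,(\pl_{t_1}\tau_{k_1-1,k_2})\bigr)$. Forming the ratio cancels the common factor $\tau_{k_1+1,k_2}\tau_{k_1-1,k_2}$ and leaves exactly $\pl_{t_1}\log(\tau_{k_1+1,k_2}/\tau_{k_1-1,k_2})$, as claimed. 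Repeating the same computation with $\alpha=2$ in place of $\alpha=1$ then reproduces the first identity of (\ref{C010}), now shifting the $k_2$ index.

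For the second identity of (\ref{C008}), I would use the fourth equation of (\ref{introa}) at $\alpha=1$, $\ell=1$, whose right-hand side is again a Hirota bracket but now in $\pl/\pl s_1$ and with the two $\tau$'s in the \emph{reversed} order $\tau_{m-e_\alpha}\!\circ\tau_{m+e_\alpha}$. Dividing this by the same $\ell=0$ equation yields $-\pl_{s_1}\log(\tau_{k_1+1,k_2}/\tau_{k_1-1,k_2})$, the extra minus sign arising naturally from this reversed ordering. The analogous argument with $\alpha=2$ (so that $s_{2,i}=u_i$ plays the role of $s_{1,i}=s_i$) produces the second identity of (\ref{C010}).

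The computations are entirely algebraic, so the main obstacle is purely notational bookkeeping: one must track carefully the ordering of the two factors in each Hirota bracket so that the signs in (\ref{C008}) and (\ref{C010}) come out correctly. The Schur polynomial factors $\gs_\ell$ play no role beyond $\ell\in\{0,1\}$, and the signs $(-1)^{\sigma_\alpha(m)}$, $(-1)^{\sigma_\beta(n)}$ from the bilinear identity of Theorem \ref{Theo:p+q-KP} do not reappear in the argument, since they have already been absorbed into the reduced PDE system (\ref{introa}).
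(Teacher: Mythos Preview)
Your argument is correct and essentially identical to the paper's: both specialize the PDE system (\ref{introa}) to $p=1$, $q=2$ with the same identifications, evaluate the third and fourth equations at $\ell=0$ and $\ell=1$ (the paper's $j=0$ and $j=1$), and form the ratios to obtain (\ref{C008})--(\ref{C010}). Your write-up is in fact a bit more explicit than the paper's about which Hirota brackets are being computed and how the sign in the $s_1$- and $u_1$-identities arises from the reversed ordering $\tau_{m-e_\alpha}\circ\tau_{m+e_\alpha}$.
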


\begin{proof}
 The
bilinear identities (\ref{tau-bilinear identity}) imply
the PDE's (\ref{introa}) for $$
\tau_{k_1k_2}(t,s,u):=\tau_{k_1,k_2,k_1+k_2}(t,s,u),$$
(in the notation of (\ref{Tmn_def}), setting
$m=(k_1,k_2)$ and $n=k_1+k_2$) expressed in terms of
Hirota's symbol, for $j=0,1,2,\ldots$,
{
\bea
\gs_j(\tilde\pl_t)\tau_{k_1+1,k_2}\circ\tau_{k_1-1,k_2}&=&
 -\tau^2_{k_1k_2}
\frac{\pl^2}{\pl  s_1\pl t_{j+1}}\log\tau_{k_1k_2}
  \label{bilinear identity1}
\\
\gs_j(\tilde\pl_s)\tau_{k_1-1,k_2}\circ\tau_{k_1+1,k_2}&=&
 -\tau^2_{k_1k_2}
\frac{\pl^2}{\pl t_1\pl s_{j+1}}\log\tau_{k_1k_2}
  \label{bilinear identity2}
 \no\\
 \gs_j(\tilde\pl_t)\tau_{k_1,k_2+1}\circ\tau_{k_1,k_2-1}&=&
 -\tau^2_{k_1k_2}
\frac{\pl^2}{\pl  u_1\pl t_{j+1}}\log\tau_{k_1k_2}
 \no\\
\gs_j(\tilde\pl_u)\tau_{k_1,k_2-1}\circ\tau_{k_1,k_2+1}&=&
 -\tau^2_{k_1k_2}
\frac{\pl^2}{\pl t_1\pl u_{j+1}}\log\tau_{k_1k_2}. \no\eea}
In particular for $j=0$, one finds the following
expressions
 \bea \frac{\pl^2\log \tau_{k_1 ,k_2 }}{\pl
t_1\pl s_1} &=&
   -\frac{\tau_{k_1 +1,k_2 }\tau_{k_1 -1,k_2 }}
         {\tau_{k_1 ,k_2 }^2}
       \no \\
         \frac{\pl^2\log \tau_{k_1 ,k_2 }}{\pl
t_1\pl u_1} &=&
   -\frac{\tau_{k_1 ,k_2+1 }\tau_{k_1 ,k_2-1 }}
         {\tau_{k_1 ,k_2 }^2}
\label{14}
 \eea
 and another set of expressions for $j=1$. Then taking
 appropriate ratios of these expressions yields the
 formulae of Corollary \ref{bilinear-identity}.\end{proof}



\section{Virasoro constraints}\label{sect:Vir}

 Define the differential operators $\mathcal{B}_m$ involving the boundary points of the set $E$,
$$
\mathcal{B}_m := \sum_{i=1}^{2r} b_i^{m+1} \frac{\pl}{\pl
b_i} , \quad\mbox{for} \quad
E=\bigcup_{i=1}^{r}~[b_{2i-1},b_{2i}] \subset {\mathbb R}  ,$$
 and the differential operators $\BV_{-1}$ and $\BV_{0}$ involving differentiation with respect to the auxiliary variables $t_i, ~s_i, ~u_i$ and $\beta$:
 \bean
\BV_{-1}&:=&  - \frac{\partial}{\partial
t_{1}} - 2 \beta \frac{\partial}{\partial s_{1}} + \sum_{i\geq2} \left( it_{i} \frac{\partial}{\partial
t_{i-1}} + is_{i} \frac{\partial}{\partial s_{i-1}} +
iu_{i} \frac{\partial}{\partial u_{i-1}} \right)
\\
 &&+k_1(t_1-s_1)+k_2(t_1-u_1)
+\alpha  k_{1}
\\
\BV_{0}&:=&   -\frac{\partial  }{\partial
t_{2}}    - 2\beta\frac{\partial }{\partial
s_{2}} - \alpha\frac{\partial  }{\partial s_{1}}
  + \sum_{i\geq1} \left( it_{i} \frac{\partial}{\partial
t_{i}} + is_{i} \frac{\partial}{\partial s_{i}} + iu_{i}
\frac{\partial}{\partial u_{i}} \right)
\\
&&+ k_{1}^{2} + k_{2}^{2} + k_{1} k_{2}
 \label{C005}.
\eean

 \begin{theorem}\label{Theo:Virasoro}
The integral \,$\tau_{k_1k_2}(t,s,u ; \alpha, \beta ;
E)$\, defined in (\ref{1.5}) satisfies :
 \be
  \mathcal{B}_m \,\tau_{k_1k_2} \,=\, \BV_m
  \tau_{k_1k_2} ~~~~~\mbox{for}~~ m=-1 \mbox{ and } 0,
   \label{Virasoro constraints}\ee
  with $\BV_m$ as in (\ref{C005}).  \end{theorem}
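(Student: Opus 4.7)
The plan is a classical change-of-variables argument on the multiple integral representation (\ref{1.5}) of $\tau_{k_1k_2}$. I would apply the substitution $z \mapsto g(z) := z + \epsilon\, z^{m+1}$ simultaneously to every $x_j$ and every $y_j$, and expand everything to first order in $\epsilon$. Because the integral is unchanged by a mere relabeling of dummy integration variables, the total first-order variation must vanish: the contribution from the moved endpoints of $E$ (which become $b_i - \epsilon b_i^{m+1}$) gives exactly $-\epsilon\,\mathcal{B}_m\,\tau_{k_1k_2}$, while the contributions from the Jacobian $\prod_j (1+\epsilon(m+1)z_j^m)$ together with the variations of the integrand must sum to $+\epsilon\,\mathbb{V}_m\,\tau_{k_1k_2}$. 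The whole proof reduces to verifying this second identity for $m = -1$ and $m = 0$.

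I would organize the integrand variations into three sources. (i) The three Vandermondes $\Delta_n(x,y)$, $\Delta_{k_1}(x)$, $\Delta_{k_2}(y)$ give a logarithmic variation $\sum_{i<j}\frac{z_i^{m+1} - z_j^{m+1}}{z_i - z_j}$, which vanishes identically for $m=-1$ and equals the binomial $\binom{r}{2}$ for $m=0$ (with $r$ the number of variables in the Vandermonde at hand). (ii) The Jacobian gives $(m+1)\sum_j z_j^m$, which is $0$ for $m=-1$ and $k_1+k_2$ for $m=0$. (iii) Each exponential factor $f(z)$ produces $\epsilon\, z^{m+1} f'(z)$; summed over the integration variables, this is a linear combination of power sums $\sum_j x_j^p$ and $\sum_j y_j^q$ times the integrand. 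These are converted to derivatives of $\tau_{k_1k_2}$ via the dictionary
\[
 \sum_{j=1}^{k_1} x_j^p \cdot (\text{integrand}) \;\longleftrightarrow\; -\frac{\partial}{\partial s_p},\qquad \sum_{j=1}^{k_2} y_j^q \cdot (\text{integrand}) \;\longleftrightarrow\; -\frac{\partial}{\partial u_q}\qquad (p,q\geq 1),
\]
with $\sum_j x_j^0 = k_1$, $\sum_j y_j^0 = k_2$ producing the explicit $k_1$, $k_2$ coefficients in $\mathbb{V}_m$, and with $\partial_\alpha$, $\partial_\beta$ acting as $-\partial_{s_1}$, $-\partial_{s_2}$ by direct differentiation under the integral sign. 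A further identity that I would use repeatedly is
\[
 \Bigl(\frac{\partial}{\partial t_k} + \frac{\partial}{\partial s_k} + \frac{\partial}{\partial u_k}\Bigr)\tau_{k_1k_2} = 0,\qquad k\geq 1,
\]
valid because the integrand depends on $t,s,u$ only through the combinations $t-s$ (for the $x$'s) and $t-u$ (for the $y$'s); this lets me repackage the $-(\partial_{s_k}+\partial_{u_k})$ combinations produced by the $t$-weight variations as $+\partial_{t_k}$, which is exactly how the $\sum it_i\partial_{t_{i-1}}$ term of $\mathbb{V}_{-1}$ and the $\sum it_i\partial_{t_i}$ term of $\mathbb{V}_0$ arise.

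For $m=-1$ (translation) the Vandermonde and Jacobian contributions are both zero, so only the exponentials contribute: the $-z^2/2$ variation becomes $-\partial_{t_1}$ (after use of the identity above), the $\beta z^2$ variation becomes $-2\beta\partial_{s_1}$, the $\alpha z$ variation becomes $\alpha k_1$, and the $t,s,u$-deformation variations yield the tails $\sum_{i\geq 2}(it_i\partial_{t_{i-1}} + is_i\partial_{s_{i-1}} + iu_i\partial_{u_{i-1}})$ together with the $k_1(t_1-s_1) + k_2(t_1-u_1)$ pieces coming from the $k=1$ constant terms $\sum_j x_j^0 = k_1$, $\sum_j y_j^0 = k_2$. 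For $m=0$ (dilation) the three Vandermonde constants $\binom{k_1+k_2}{2} + \binom{k_1}{2} + \binom{k_2}{2} = k_1^2 + k_2^2 + k_1k_2 - (k_1+k_2)$ combine with the Jacobian constant $k_1+k_2$ to give exactly the $k_1^2 + k_2^2 + k_1k_2$ appearing in $\mathbb{V}_0$, while the exponential variations reproduce $-\partial_{t_2}$, $-2\beta\partial_{s_2}$, $-\alpha\partial_{s_1}$, and $\sum_{i\geq 1}(it_i\partial_{t_i} + is_i\partial_{s_i} + iu_i\partial_{u_i})$. The main obstacle is the Vandermonde bookkeeping at $m=0$: one must remember that $\Delta_n(x,y)$ already factors as $\Delta_{k_1}(x)\Delta_{k_2}(y)\prod_{i,j}(x_i-y_j)$, so the cross piece $\prod_{i,j}(x_i-y_j)$ is what supplies the crucial $k_1k_2$ term. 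Apart from this combinatorial check, every remaining step is a direct term-by-term match against the formula for $\mathbb{V}_m$ in (\ref{C005}).
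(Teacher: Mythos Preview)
Your proposal is correct and is essentially the same argument as the paper's, just phrased in the change-of-variables language rather than the fundamental-theorem-of-calculus language: the paper writes $\mathcal{B}_{-1}\tau=\int_{E^n}\bigl(\sum_j\partial_{x_j}+\sum_j\partial_{y_j}\bigr)I\,dz$ and $\mathcal{B}_{0}\tau=\int_{E^n}\bigl(\sum_j\partial_{x_j}x_j+\sum_j\partial_{y_j}y_j\bigr)I\,dz$ and then expands the derivatives, which is exactly the infinitesimal form of your substitution $z\mapsto z+\epsilon z^{m+1}$. Your bookkeeping (the dictionary $\sum_j x_j^p\leftrightarrow -\partial_{s_p}$, $\sum_j y_j^q\leftrightarrow -\partial_{u_q}$, the identity $(\partial_{t_k}+\partial_{s_k}+\partial_{u_k})\tau=0$, and the Vandermonde/Jacobian count at $m=0$) matches the paper's use of $\sum_i\partial_{x_i}\Delta=0$, $\sum_i x_i\partial_{x_i}\Delta=\binom{r}{2}\Delta$, and the commutation $\sum_i(\partial_{x_i}x_i-x_i\partial_{x_i})=n$.
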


\begin{proof}  Let  $I_{k_1k_2} (x,y) $ be the integrand of the integral $\tau_{k_1k_2}(t,s,u ; \alpha, \beta ; E)$\, defined in (\ref{1.5}):
$$
\tau:=\tau_{k_1k_2}(t,s,u ; \alpha, \beta ; E)=\int_{E^n} I_{k_1k_2} (x,y) \prod_1^{k_1}dx_i
 \prod_1^{k_2}dy_j.
 $$
To obtain (\ref{Virasoro constraints}), one uses the fundamental Theorem of calculus and the commutation relation $$\sum_1^n (\frac{\pl}{\pl x_i} x_i- x_i \frac{\pl}{\pl x_i} )=n;$$ one uses the fact that for any Vandermonde, $$\sum_1^k \frac{\pl}{\pl x_i}\Dt(x_1,\ldots,x_k)=0$$  and $$\sum_1^k  x_i\frac{\pl}{\pl x_i}\Dt(x_1,\ldots,x_k)=\frac{n(n-1)}{2}\Dt(x_1,\ldots,x_k).$$  Also one uses the fact that the sum of the $x_i$ and $y_i$ derivatives of the integrand $I_{k_1,k_2}$ translates into the $t_i,~s_i$ and $u_i$-derivatives of $I_{k_1,k_2}$, i.e., the auxiliary parameters $t,s,u ; \alpha, \beta$ were precisely added for this very purpose! So we find:
\bean
 \mathcal{B}_{-1}\tau
 &=&  \int_{E^n}  \left(\sum_1^{k_1}\frac{\pl}{\pl x_j}+
     \sum_1^{k_2}\frac{\pl}{\pl y_j}\right) I_{k_1,k_2}    \prod_1^{k_1}dx_i     \prod_1^{k_2}dy_j
      \\
    &=&   \int_{E^n}  \BV_{-1} (I_{k_1,k_2})    \prod_1^{k_1}dx_i     \prod_1^{k_2}dy_j
    \\
    &=&   \BV_{-1} \left( \int_{E^n}  I_{k_1,k_2} \prod_1^{k_1}dx_i     \prod_1^{k_2}dy_j\right)
=   \BV_{-1}   \tau
 \eean
   and
 \bean
 \mathcal{B}_{0}\tau
 &=&  \int_{E^n}  \left(\sum_1^{k_1}\frac{\pl}{\pl x_j}x_j+
     \sum_1^{k_1}\frac{\pl}{\pl y_j}y_j\right) I_{k_1,k_2}    \prod_1^{k_1}dx_i     \prod_1^{k_2}dy_j
      \\
    &=&   \int_{E^n} \BV_{0} (I_{k_1,k_2})    \prod_1^{k_1}dx_i     \prod_1^{k_2}dy_j
    \\
    &=&    \BV_{0} \left( \int_{E^n}  I_{k_1,k_2} \prod_1^{k_1}dx_i     \prod_1^{k_2}dy_j\right)
=  \BV_{0}  \tau,
 \eean
  establishing Theorem \ref{Theo:Virasoro}. Aother way of computing this has appeared in [2].\end{proof}

  \bigbreak

\noindent We also have the following identities, valid
when acting on $\tau_{k_{1}k_{2}} (t,s,u;\alpha, \beta;
E)$:
\begin{equation}
 \frac{\pl}{\pl t_n} = -\frac{\pl}{\pl s_n} - \frac{\pl}{\pl u_n}
, \label{C046} \end{equation}
\be
\frac{\partial}{\partial s_{1}} =
-\frac{\partial}{\partial \alpha} ,~~~~~
\frac{\partial}{\partial t_{1}} =
\frac{\partial}{\partial \alpha} -
\frac{\partial}{\partial u_{1}},~~~~~
\frac{\partial}{\partial s_{2}} =
-\frac{\partial}{\partial \beta}, ~~~~~
\frac{\partial}{\partial t_{2}} =
\frac{\partial}{\partial \beta} -
\frac{\partial}{\partial u_{2}}  \label{C004} .\ee

\begin{corollary}
On the locus $\mathcal{L} = \{ t = s = u = \beta = 0
\}$, the function \\ $f := \log \tau_{k_{1}k_{2}}
(t,s,u;\alpha, \beta; E)$ satisfies the Virasoro
constraints :
 \bea
\frac{\partial f}{\partial s_{1}} &=& -\frac{\partial
f}{\partial \alpha} ,\qquad\qquad \frac{\partial
f}{\partial s_{2}} = -\frac{\partial f}{\partial \beta}
\label{C003}
\\
 \frac{\partial f}{\partial t_{1}} &=&  -
\mathcal{B}_{-1} f + \alpha k_{1},\qquad
\frac{\partial f}{\partial u_{1}} = \left(
\mathcal{B}_{-1} + \frac{\partial}{\partial \alpha}
\right) f - \alpha k_{1}
\no\\
\frac{\partial f}{\partial t_{2}} &=& \left(
-\mathcal{B}_{0} + \alpha \frac{\partial}{\partial
\alpha} \right) f + \bigg( k_{1}^{2} + k_{2}^{2} + k_{1}
k_{2} \bigg) ~~~~~~
\no\\
\frac{\partial f}{\partial u_{2}} &=& \left(
\mathcal{B}_{0} - \alpha \frac{\partial}{\partial
\alpha} + \frac{\partial}{\partial \beta} \right) f -
\bigg( k_{1}^{2} + k_{2}^{2} + k_{1} k_{2} \bigg)
\no%
\eea
\vspace*{-.5cm}
\bea
  \label{C002}
  \frac{\partial^{2} f}{\partial t_{1} \partial
u_{1}}& =& -\mathcal{B}_{-1} \left( \mathcal{B}_{-1} +
\frac{\partial}{\partial \alpha} \right) f - k_{2}
 \\
\frac{\partial^{2} f}{\partial t_{1} \partial u_{2}}& =&
-\mathcal{B}_{-1} \left( \mathcal{B}_{0} -
\alpha\frac{\partial}{\partial \alpha} +
\frac{\partial}{\partial \beta} \right) f + 2 \left(
\mathcal{B}_{-1} + \frac{\partial}{\partial \alpha}
\right) f - 2\alpha k_{1}\no\\
%
\frac{\partial^{2} f}{\partial t_{2} \partial u_{1}}& =&
\left( -\mathcal{B}_{0} + \alpha\frac{\partial}{\partial
\alpha} + 1 \right) \left( \mathcal{B}_{-1} +
\frac{\partial}{\partial \alpha} \right) f  - 2\alpha
k_{1} ~~~~~~~~~~~~~~~~~\no \eea
\vspace*{-.5cm}
\bea \label{C001}
\frac{\partial^{2} f}{\partial t_{1} \partial s_{1}}
&=& \mathcal{B}_{-1} \left( \frac{\pl}{\pl \alpha} \right)
f - k_{1}
\\%
 \frac{\partial^{2} f}{\partial t_{1} \partial
s_{2}}& =&\left( \mathcal{B}_{-1}  
\frac{\partial}{\partial \beta}   - 2  
\frac{\partial }{\partial \alpha}
 \right)f  \no\\
  \frac{\partial^{2} f}{\partial t_{2} \partial
s_{1}} &=& \left( \mathcal{B}_{0} - \alpha
\frac{\partial}{\partial \alpha} -1 \right) \left(
\frac{\partial}{\partial \alpha} \right) f ~.~~~
 \no\eea
\end{corollary}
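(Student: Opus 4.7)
The Corollary is entirely a matter of restricting Theorem~\ref{Theo:Virasoro} to $\mathcal{L}=\{t=s=u=\beta=0\}$ and combining it with the trivial identities (\ref{C046}) and (\ref{C004}). Dividing $\mathcal{B}_m\tau_{k_1k_2}=\BV_m\tau_{k_1k_2}$ by $\tau_{k_1k_2}$ converts it into a first-order PDE for $f$,
\[
\mathcal{B}_m f=(\text{differential part of }\BV_m)f+(\text{polynomial part of }\BV_m),
\]
valid for all $(t,s,u,\alpha,\beta)$. On $\mathcal{L}$ every coefficient $\beta,\,it_i,\,is_i,\,iu_i$ multiplying the differential part of $\BV_m$ vanishes, so the two Virasoro identities collapse to
\[
\mathcal{B}_{-1}f\big|_{\mathcal{L}}=-\frac{\pl f}{\pl t_1}+\alpha k_1,\qquad \mathcal{B}_0 f\big|_{\mathcal{L}}=-\frac{\pl f}{\pl t_2}-\alpha\frac{\pl f}{\pl s_1}+k_1^2+k_2^2+k_1k_2,
\]
which already yield the $\pl f/\pl t_1$ and $\pl f/\pl t_2$ identities of the statement once (\ref{C004}) is used to rewrite $\pl f/\pl s_1$ as $-\pl f/\pl\alpha$.

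The remaining first-order identities, including (\ref{C046}), (\ref{C004}), $\pl f/\pl s_1=-\pl f/\pl\alpha$ and $\pl f/\pl s_2=-\pl f/\pl\beta$, follow at the integrand level of (\ref{1.5}): the operators $\pl/\pl t_n,\,\pl/\pl s_n,\,\pl/\pl u_n,\,\pl/\pl\alpha,\,\pl/\pl\beta$ simply insert the factors $\sum(x_j^n+y_j^n),\,-\sum x_j^n,\,-\sum y_j^n,\,\sum x_j,\,\sum x_j^2$ inside the $n$-fold integral, and the announced linear dependencies between these insertions are immediate. Plugging them into the $t_1$ and $t_2$ formulas gives the $u_1$ and $u_2$ identities. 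For the six second-order equations of (\ref{C002})--(\ref{C001}), I would first use (\ref{C046})--(\ref{C004}) to rewrite $\pl^2 f/(\pl t_i\,\pl x)$ (for $x\in\{s_1,s_2,u_1,u_2\}$) as $\pm\pl/\pl\alpha$ or $\pm\pl/\pl\beta$ applied to $\pl f/\pl t_i$, plus already-known correction terms; this identity is valid off-locus. Restricting to $\mathcal{L}$ and using that $\pl/\pl\alpha,\,\pl/\pl\beta$ and $\mathcal{B}_m$ all commute with restriction, I can substitute the first-order formulas for $\pl f/\pl t_i\big|_{\mathcal{L}}$ obtained in the previous step and differentiate them in $\alpha$ or $\beta$. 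The nested operators $\mathcal{B}_{-1}(\mathcal{B}_{-1}+\pl/\pl\alpha)$ and $(-\mathcal{B}_0+\alpha\pl/\pl\alpha+1)(\mathcal{B}_{-1}+\pl/\pl\alpha)$ visible on the right-hand sides are precisely what arises from composing the outer $\pl/\pl\alpha$ or $\pl/\pl\beta$ with the first-order expressions that already contain one $\mathcal{B}_m$.

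The main technical obstacle will be the algebraic bookkeeping. One must carefully track the constants $-k_1,-k_2,-2\alpha k_1$ produced by differentiating the polynomial parts $k_1(t_1-s_1)+k_2(t_1-u_1)+\alpha k_1$ and $k_1^2+k_2^2+k_1k_2$ of $\BV_{-1}$ and $\BV_0$; the extra first-order contributions generated when an outer derivative hits one of the coefficients $2\beta,\,is_i,\,iu_i$ (for instance $\pl/\pl s_2$ applied to the sum $\sum_{i\ge 2} is_i\,\pl f/\pl s_{i-1}$ inside $\BV_{-1}$ produces a surviving $2\,\pl f/\pl s_1$ term on $\mathcal{L}$, which is the source of the $-2\,\pl f/\pl\alpha$ contribution to the $\pl^2 f/\pl t_1\pl s_2$ formula); and the non-commutativity $[\mathcal{B}_{-1},\mathcal{B}_0]=\mathcal{B}_{-1}$ when re-ordering nested operators. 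Beyond this routine arithmetic, no conceptual difficulty is expected: everything reduces to Theorem~\ref{Theo:Virasoro} and the elementary integrand computations (\ref{C046})--(\ref{C004}).
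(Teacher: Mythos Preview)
Your proposal is correct and matches the paper's approach: divide the Virasoro constraints by $\tau$ to get first-order identities for $f$, restrict to $\mathcal{L}$ for (\ref{C003}), and for (\ref{C002})--(\ref{C001}) differentiate the \emph{off-locus} identities and then restrict, tracking the contributions that arise when the outer derivative hits the coefficients $2\beta,\,is_i,\,iu_i,\,it_i$ or the polynomial parts $\ell_i$. One caution: your sentence ``$\partial/\partial\beta$\dots\ commute[s] with restriction'' is literally false, since $\beta=0$ is part of $\mathcal{L}$; but your own obstacles paragraph shows you intend to differentiate the full off-locus expression (including $L_1 f+\ell_1$) \emph{before} restricting, which is the right procedure and is exactly what the paper does.
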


\begin{proof} Upon dividing by $\tau$, equations (\ref{C003})
are a direct consequence of (\ref{C005}) and
(\ref{C004}), when evaluated on the locus $\mathcal{L}$. To derive
equations (\ref{C002}) and (\ref{C001}), we use the fact
that the boundary operators
$$\mathcal{B}_{m}=\sum_{j=1}^{2r} b_{j}^{m+1}
\frac{\partial}{\partial b_{j}}$$ commute with pure
\mbox{time-differential} operators. For example, the
calculation of the first equation in (\ref{C002}) goes
as follows. We know from (\ref{C005}) and (\ref{C004})
that :
$$
-\mathcal{B}_{-1} f = \left( \frac{\partial f}{\partial
t_{1}} - \alpha k_{1} + L_{1}f + \ell_{1} \right) ,~~~
\left( \mathcal{B}_{-1} + \frac{\partial}{\partial
\alpha} \right) f = \left( \frac{\partial f}{\partial
u_{1}} + \alpha k_{1} + L_{2}f + \ell_{2} \right) ,
$$
where the $L_{i}$ are linear differential operators
vanishing on $\mathcal{L}$ (commuting with
$\mathcal{B}_{m}$) and the $\ell_{i}$ are functions
vanishing on $\mathcal{L} ~~ (i=1,2)$. Therefore:
\bean
 \lefteqn{-\mathcal{B}_{-1} \left( \mathcal{B}_{-1} +
\frac{\partial}{\partial \alpha} \right) f
\Bigr|_{\mathcal{L}}}\\&&= -\mathcal{B}_{-1} \left(
\frac{\partial f}{\partial u_{1}} + \alpha k_{1} + L_{2}
f + \ell_{2} \right) \Bigr|_{\mathcal{L}}\\
 &&= -\mathcal{B}_{-1}
\left( \frac{\partial f}{\partial u_{1}} + L_{2} f
\right) \Bigr|_{\mathcal{L}} = - \left(
\frac{\partial}{\partial u_{1}} + L_{2} \right)
\mathcal{B}_{-1} f \Bigr|_{\mathcal{L}} = - \left(
\frac{\partial}{\partial u_{1}} \right) \mathcal{B}_{-1}
f \Bigr|_{\mathcal{L}}
\\
&&= \frac{\partial}{\partial u_{1}} \left(
\frac{\partial f}{\partial t_{1}} - \alpha k_{1} +
L_{1}f + \ell_{1} \right) \Bigr|_{\mathcal{L}} = \left(
\frac{\partial^{2} f}{\partial t_{1} \partial u_{1}} +
\frac{\partial}{\partial u_{1}} L_{1}f + \frac{\partial
\ell_{1}}{\partial u_{1}} \right) \Bigr|_{\mathcal{L}}.
\eean


Since  $$L_{1} = -2 \beta
\frac{\partial}{\partial \alpha} - \sum_{i\geq2} \left(
i t_{i} \frac{\partial}{\partial t_{i-1}} + i s_{i}
\frac{\partial}{\partial s_{i-1}} + i u_{i}
\frac{\partial}{\partial u_{i-1}} \right)$$ and
$\ell_{1} = k_{1} s_{1} + k_{2} u_{1} -
\left( k_{1} + k_{2} \right) t_{1} $, one checks
that, along the locus $\LR$, one has $$\frac{\partial}{\partial u_{1}}L_1f=0\qquad \mbox{    ~~~and~~~   }\qquad\frac{\partial
\ell_{1}}{\partial u_{1}} =k_2,$$ yielding the expression for $\frac{\partial^{2} f}{\partial t_{1} \partial
u_{1}}$ in (\ref{C002}).
%
A similar procedure \mbox{applies} to establish all the
identities above. \end{proof}


\section{A PDE for the Gaussian ensemble with \mbox{external source}}
\label{sect:PDE}

Consider the Gaussian Hermitian random matrix ensemble
$\mathcal {H}_n$ with external source $A$, given by the
diagonal matrix (\ref{diagmatrix}) (set $n=k_1+k_2$)
 %
  and density
  \be
  \frac{1}{Z_n} 
  e^{-\Tr(\frac{1}{2} M^2-AM) }dM.
  \ee
    Given a disjoint union of intervals
 $
 E:= \bigcup^r_{i=1}[b_{2i-1},b_{2i}]\subset {\mathbb R}
   ,$
%
%
%
define the algebra of differential operators, generated
by
 \be\mathcal{B}_k=\sum_{i=1}^{2r}b_i^{k+1}\frac{\pl}{\pl b_i}.\ee
Consider the following probability:
  \be
\BP_n(\al;E):=\BP(~\mbox{all eigenvalues}~\in E) =
 \frac{1}{Z_n} \int_{\mathcal {H}_{n}(E)}
   e^{-\Tr(\frac{1}{2} M^2-AM) }dM,
  \label{0.4}
   \ee
where $\mathcal {H}_{n}(E)$ is the set of all Hermitian
matrices with all eigenvalues in $E$. The purpose of
this section is to prove the following theorem:

\begin{theorem} \label{Theo:PDE}
The log of the probability $\BP_n(a;E)$
   satisfies
  a  \underline {fourth-order} PDE in $a$ and in the endpoints $b_1,..., b_{2r}$
  of the set $E$, with \underline {quartic
  non-linearity}:
\bea && \bigg( F^{+}    \mathcal{B}_{-1} G^{-}   + F^{-}
  \mathcal{B}_{-1} G^{+}   \bigg) \bigg( F^{+}
\mathcal{B}_{-1} F^{-}   - F^{-}    \mathcal{B}_{-1}
F^{+}   \bigg)
\no\\
&&- \bigg( F^{+} G^{-} + F^{-} G^{+} \bigg) \bigg( F^{+}
   \mathcal{B}_{-1}^{ 2} F^{-}   - F^{-}
\mathcal{B}_{-1}^{ 2} F^{+}   \bigg)  =  0 \label{C017a}
, \eea
 or what is the same
 \begin{equation}
\det \left( \begin{array}{cccc}
 G^+& \mathcal{B}_{-1}F^+  & - F^+ &0\\
 -G^-& \mathcal{B}_{-1}F^-  & - F^- &0\\
 \mathcal{B}_{-1} G^+ &\mathcal{B}_{-1}^2F^+&0&-F^+\\
 -\mathcal{B}_{-1} G^- &\mathcal{B}_{-1}^2F^-&0&-F^-\\
 \end{array}\right) =0 , \label{C021}
\end{equation}
  where
$$
  F^{+}  = \mathcal{B}_{-1}
 \frac{\partial}{\partial \alpha}   \log
\BP_{n} - k_{1} ,\qquad
 F^{-} =  -\mathcal{B}_{-1} \left( \mathcal{B}_{-1}  +
\frac{\partial}{\partial \alpha} \right) \log  \BP_{n} -
k_{2}
$$

\vspace*{-.6cm}

\bean 
  H_{1}^{+} & =& 4  \frac{\partial}{\partial
\alpha}   \log \BP_{n} + 4 \alpha k_{1} +
\frac{4k_{1}k_{2}}{\alpha}
 ,\\
 H_{1}^{-}
 &=&  -  2 \left( \mathcal{B}_{0} - \alpha
\frac{\partial}{\partial \alpha} + 1 \right)
\frac{\partial}{\partial \alpha}   \log \BP_{n}
 -   \frac{4k_{1}k_{2}}{\alpha}
\\
H_{2}^{+} &=& 2 \left( \!\mathcal{B}_{0}\! -\! \alpha
\frac{\partial}{\partial \alpha}\! -\! 1\! - 2 \alpha
\mathcal{B}_{-1} \!\right) \frac{\partial}{\partial
\alpha}   \log \BP_{n}
 \\
 H_{2}^{-} &=&  -  2 \left( \mathcal{B}_{0}\! -\! \alpha
\frac{\partial}{\partial \alpha} \!-\! 1 \right)   \bigg(
\mathcal{B}_{-1} \!+\! \frac{\partial}{\partial \alpha}
\bigg) \log
 \BP_{n}
 \\ \
 2G^{\pm}&=&\left\{H_1^{\pm},F^{\pm}\right\}_{\mathcal{B}_{-1}}\mp
 \left\{H_2^{\pm},F^{\pm}\right\}_{\pl/ \pl \al} .
 \eean 
\end{theorem}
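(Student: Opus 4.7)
The plan is to follow the strategy of Adler--van Moerbeke developed in \cite{AvM-Pearcey}, combining the $3$-KP bilinear identities of Theorem~\ref{Theo:p+q-KP} with the Virasoro constraints of Theorem~\ref{Theo:Virasoro}. First, I would extract from Corollary~\ref{bilinear-identity} the ``ratio'' identities
\bean
\partial_{t_1}\log\frac{\tau_{k_1+1,k_2}}{\tau_{k_1-1,k_2}}&=&\frac{\partial^2_{t_2 s_1}\log\tau_{k_1,k_2}}{\partial^2_{t_1 s_1}\log\tau_{k_1,k_2}},\qquad \partial_{s_1}\log\frac{\tau_{k_1+1,k_2}}{\tau_{k_1-1,k_2}}=-\frac{\partial^2_{t_1 s_2}\log\tau_{k_1,k_2}}{\partial^2_{t_1 s_1}\log\tau_{k_1,k_2}},
\eean
and the two analogous identities with $k_2$ shifted (and $s$ replaced by $u$). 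Combined with (\ref{14}), these four equations completely determine the logarithmic ratios $\tau_{k_1\pm 1,k_2}/\tau_{k_1,k_2}$ and $\tau_{k_1,k_2\pm 1}/\tau_{k_1,k_2}$ modulo signs, purely in terms of mixed second time-derivatives of $\log\tau_{k_1,k_2}$.

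Second, I would restrict everything to the locus $\LR=\{t=s=u=\beta=0\}$ and convert each of the time derivatives $\partial^2_{t_1 s_1}$, $\partial^2_{t_2 s_1}$, $\partial^2_{t_1 s_2}$, $\partial^2_{t_1 u_1}$, $\partial^2_{t_2 u_1}$, $\partial^2_{t_1 u_2}$ of $\log\tau_{k_1,k_2}$ into expressions in $\al,\beta$-derivatives and the boundary operators $\mathcal{B}_{-1},\mathcal{B}_0$ via Corollary~4.2; this is where the quantities
\be
F^{+}=\partial^2_{t_1 s_1}\log\tau_{k_1,k_2}\bigl|_{\LR},\qquad F^{-}=\partial^2_{t_1 u_1}\log\tau_{k_1,k_2}\bigl|_{\LR}
\no\ee
are identified with the expressions in the statement, and the numerator derivatives $\partial^2_{t_2 s_1},\partial^2_{t_1 s_2}$ and their $u$-analogues assemble (after taking linear combinations and accounting for the shift constants $\al k_1,k_1 k_2/\al$ coming from terms like $\partial_{t_1}\log\tau_{k_1-1,k_2}$) into the expressions $H_1^{\pm},H_2^{\pm}$.

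Third, applying the boundary operator $\mathcal{B}_{-1}$ to each of the four KP ratio identities and using the fact that, on the locus $\LR$, $\mathcal{B}_{-1}$ commutes with pure time-derivative operators (since each $\mathcal{B}_k$ involves only the $b_i$), one generates four further identities involving $\mathcal{B}_{-1}F^{\pm}$, $\mathcal{B}_{-1}^2 F^{\pm}$, and the Wronskian-type combinations
\be
2G^{\pm}=\{H_1^{\pm},F^{\pm}\}_{\mathcal{B}_{-1}}\mp\{H_2^{\pm},F^{\pm}\}_{\partial/\partial\al},
\no\ee
which arise precisely when differentiating a ratio $H_i^{\pm}/F^{\pm}$. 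The four resulting relations form a homogeneous linear system in the four ``eliminated'' quantities $\mathcal{B}_{-1}\log\tau_{k_1\pm 1,k_2}\bigl|_{\LR}$ and $\mathcal{B}_{-1}\log\tau_{k_1,k_2\pm 1}\bigl|_{\LR}$, together with their $\mathcal{B}_{-1}$-derivatives; the consistency of this overdetermined system is exactly the vanishing of the $4\times 4$ determinant (\ref{C021}), and cofactor expansion along the last two columns recovers the explicit quartic PDE (\ref{C017a}).

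The main obstacle is \emph{not} the conceptual structure (which is forced by the KP/Virasoro dictionary) but rather the bookkeeping: every conversion of a mixed time-derivative on $\LR$ via Corollary~4.2 produces several correction terms ($k_1$, $k_2$, $\al k_1$, and most delicately the term $k_1 k_2/\al$ in $H_1^{+}$, which emerges from differentiating $\log(\tau_{k_1+1,k_2}/\tau_{k_1-1,k_2})$ where the shift in $k_1$ interacts with the Vandermonde factors in the integrand (\ref{1.5})), and one must verify that these correction constants assemble into exactly the functions $H_1^{\pm},H_2^{\pm}$ stated in the theorem. Once the dictionary is locked in, the $4\times 4$ determinantal identity is a purely algebraic consequence of the four KP ratio identities together with their $\mathcal{B}_{-1}$-derivatives.
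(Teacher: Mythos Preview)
Your overall strategy (KP ratio identities from Corollary~\ref{bilinear-identity} combined with the Virasoro dictionary of Corollary~4.2, followed by an elimination) is correct, but you have misidentified \emph{what} is being eliminated, and this makes your endgame incorrect as stated.

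The shifted log-ratios $\log(\tau_{k_1+1,k_2}/\tau_{k_1-1,k_2})$ and $\log(\tau_{k_1,k_2+1}/\tau_{k_1,k_2-1})$ are \emph{not} the final auxiliary quantities; they are killed at an earlier stage by cross-differentiation. Concretely, from the pair (\ref{C008}) one acts with $\partial/\partial\alpha$ on the first identity and with $\mathcal{B}_{-1}$ on the second and adds: the log-ratio drops out entirely, leaving a single relation among second $t,s$-derivatives of $\log\tau_{k_1,k_2}$. The analogous manipulation on (\ref{C010}) gives a second relation in the $t,u$-derivatives. When these are pushed through the Virasoro dictionary (\ref{C001})--(\ref{C002}), every mixed second derivative can be rewritten in terms of $\alpha,\mathcal{B}_{-1},\mathcal{B}_0$ acting on $\log\tau$ \emph{except} one: the quantity $X:=\mathcal{B}_{-1}\,\partial_\beta\log\tau_{k_1,k_2}\bigl|_{\mathcal{L}}$, which enters through $\partial^2_{t_1 s_2}$ and $\partial^2_{t_1 u_2}$ (see (\ref{C001}), (\ref{C002})). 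The two relations then read $\{X,F^{+}\}_{\mathcal{B}_{-1}}=G^{+}$ and $-\{X,F^{-}\}_{\mathcal{B}_{-1}}=G^{-}$; solving this $2\times 2$ linear system for $X$ and $\mathcal{B}_{-1}X$ and imposing $\mathcal{B}_{-1}(\text{expression for }X)=(\text{expression for }\mathcal{B}_{-1}X)$ gives (\ref{C017a}). The $4\times 4$ determinant (\ref{C021}) is the statement that the matrix annihilates the column $(1,X,\mathcal{B}_{-1}X,\mathcal{B}_{-1}^2 X)^{\top}$, not a system in four shifted-$\tau$ quantities. Your proposal omits $\beta$ altogether, and without it the conversion of $\partial^2_{t_1 s_2}$ and $\partial^2_{t_1 u_2}$ on $\mathcal{L}$ cannot be closed.

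A second, smaller correction: the term $4k_1k_2/\alpha$ in $H_1^{+}$ does not come from shifting $k_1$ in a Vandermonde. It appears when one passes from $\log\tau_{k_1,k_2}$ to $\log\BP_n$ via the explicit evaluation of the full-line integral, $\tau_{k_1,k_2}(\cdot;{\mathbb R})\bigl|_{\mathcal{L}}=c_{k_1 k_2}\,\alpha^{k_1 k_2}e^{k_1\alpha^2/2}$, so that $\partial_\alpha\log\tau=\partial_\alpha\log\BP_n+k_1\alpha+k_1k_2/\alpha$ and $H_1^{+}=4\partial_\alpha\log\tau$ acquires exactly those shifts.
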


\begin{remark}   It is not surprising that the PDE
(\ref{C017a}) has exactly \emph{the same form} as the
PDE \mbox{derived} in \cite{AvM-Pearcey} and
\cite{AvM-Pearcey-joint}, associated to the Gaussian
Unitary Ensemble with an \mbox{external} source, in the
case where the source matrix admits \emph{two}
eigenvalues of \mbox{\emph{opposite}} signs. The only
difference is that the expressions for the functions
$F^{\pm}, H_{1}^{\pm}, H_{2}^{\pm}$ and $G^{\pm}$
\mbox{obtained} here, differ from those in
\cite{AvM-Pearcey}. The reason is that corresponding
$\tau$-functions satisfy the same integrable equation
(the 3-KP hierarchy, as in Section 3.3), whereas the Virasoro constraints \mbox{leading}
to (\ref{C017a}) are different. In particular, there is
no more involution relating the variables $s_{k}$ and
$u_{k}$.
\end{remark}

\begin{proof}[Proof of Theorem~\ref{Theo:PDE}]
Remember:
\begin{displaymath}  \BP_n (\alpha , E) := \BP_n (\textrm{spec}  M \subseteq E) =
\frac{ \tau_{k_1k_2} (t, s, u ; \alpha, \beta ; E) } {
\tau_{k_1k_2} (t, s, u ; \alpha, \beta ; {\mathbb R}) } \Bigr|_{
\mathcal{L} \equiv \{ t = s = u = \beta = 0 \} }.
\end{displaymath}
The denominator, that is the integral (\ref{1.5}) over
the whole range, can be expressed in terms of moments,
which contain standard {Gaussian} {integrals} (the
reader is referred to the \mbox{Appendix $1$} in
\cite{AvM-Pearcey}), leading to an exact evaluation,
with $c_{k_{1}k_{2}}$ a constant, depending on $k_{1} ,
k_{2}$ only:
\begin{equation}
\tau_{k_{1},k_{2}} (t, s, u ; \alpha, \beta ;{\mathbb R})
\Bigr|_{\mathcal{L}} = c_{k_{1}k_{2}} \alpha^{k_{1}
k_{2}}e^{ k_{1} \left( \alpha^{2} / 2 \right)}.
~~~~~~~~~~~ \label{C018}
\end{equation}
%
Consequently (with $C_{k_{1}k_{2}}$ a constant,
depending on $k_{1} , k_{2}$ only):
\begin{equation}
\log \BP_{n} (\alpha, E) = \log \tau_{k_{1},k_{2}} (t,
s, u ; \alpha, \beta ; E) \Bigr|_{\mathcal{L}} -
\frac{k_{1}}{2} \alpha^{2} - k_{1} k_{2} \log
\left(\alpha\right) - C_{k_{1}k_{2}} \label{C020}.
\end{equation}

Then, we turn our attention to the numerator
$\tau_{k_1k_2} (t, s, u ; \alpha, \beta ; E)$, more
briefly noted as $\tau_{k_{1}k_{2}}$. On the locus
$\mathcal{L}$, by (\ref{C008}), and (\ref{C003}), one
finds:
\bea \label{C011} \frac{ \frac{\partial^{2}}{\partial t_{2}
\partial s_{1}} \log \tau_{k_{1}, k_{2}}
}{ \frac{\partial^{2}}{\partial t_{1}
\partial s_{1}}  \log \tau_{k_{1}, k_{2}} } &=& \frac{\partial}{\partial t_{1}}
\log \left( \frac{\tau_{k_{1}+1, k_{2}}}{\tau_{k_{1}-1,
k_{2}}} \right) = -\mathcal{B}_{-1} \log \left(
\frac{\tau_{k_{1}+1, k_{2}}}{\tau_{k_{1}-1, k_{2}}}
\right) + 2\alpha ~
 \\
\frac{ \frac{\partial^{2}}{\partial t_{1}
\partial s_{2}} \log \tau_{k_{1}, k_{2}}
}{ \frac{\partial^{2}}{\partial t_{1}
\partial s_{1}}  \log \tau_{k_{1}, k_{2}} } &=& -\frac{\partial}{\partial s_{1}} \log \left(
\frac{\tau_{k_{1}+1, k_{2}}}{\tau_{k_{1}-1, k_{2}}}
\right) = \frac{\partial}{\partial \alpha} \log \left(
\frac{\tau_{k_{1}+1, k_{2}}}{\tau_{k_{1}-1, k_{2}}}
\right) . \no \eea
%
%
In (\ref{C011}), acting with $  {\partial}/{\partial
\alpha} $ on the first expression, then acting with
$\mathcal{B}_{-1}$ on the second expression, and adding
the two, yields:

\begin{displaymath}
\frac{\partial}{\partial \alpha} \left( \frac{
\frac{\partial^{2}}{\partial t_{2}
\partial s_{1}} \log \tau_{k_{1}, k_{2}} }{
\frac{\partial^{2}}{\partial t_{1}
\partial s_{1}}  \log \tau_{k_{1}, k_{2}} } - 2\alpha \right)
+ \mathcal{B}_{-1} \left( \frac{
\frac{\partial^{2}}{\partial t_{1}
\partial s_{2}} \log \tau_{k_{1}, k_{2}} }{
\frac{\partial^{2}}{\partial t_{1}
\partial s_{1}}  \log \tau_{k_{1}, k_{2}} } \right)
= 0 . \end{displaymath}
This identity can conveniently be expressed as
Wronskians:
\begin{equation}
\left\{ \mathcal{B}_{-1} \frac{\partial}{\partial \beta}
\log \tau_{k_{1} k_{2}} \Bigr|_{\mathcal{L}} , F^{+}
\right\} _{\mathcal{B}_{-1}} = \bigg\{ H_{1}^{+} ,
\frac{F^{+}}{2} \bigg\}_{\mathcal{B}_{-1}} - \bigg\{
H_{2}^{+} , \frac{F^{+}}{2}
\bigg\}_{\frac{\partial}{\partial \alpha}} =: G^{+},
\label{C012}
\end{equation}
where
\bea \label{C013}
 F^{+}  &:=&  \frac{\partial^{2}}{\partial t_{1}
\partial s_{1}} \log \tau_{k_{1} , k_{2}}
   \\
H_{1}^{+} &:=& -2 \frac{\partial^{2}}{\partial t_{1}
\partial s_{2}} \log \tau_{k_{1} , k_{2}} + 2\mathcal{B}_{-1}\frac{\partial}{\partial \beta} \log
\tau_{k_{1} k_{2}} \no\\
H_{2}^{+} &:=& 2 \left( \frac{\partial^{2}}{\partial
t_{2}
\partial s_{1}} \log \tau_{k_{1} , k_{2}} -2\alpha
\frac{\partial^{2}}{\partial t_{1} \partial s_{1}} \log
\tau_{k_{1} , k_{2}} \right) .~~ \no\eea
%
%
In this way, by virtue of (\ref{C001}) and (\ref{C020})
one obtains, along the locus $\mathcal{L}$, explicit
expressions for $F^{+}$, $H_{1}^{+}$ and $H_{2}^{+}$
that are free of partials in $\beta$; namely:
{  \bea
 \label{Hplus} F^{+}& =& \mathcal{B}_{-1}
\left(\frac{\partial}{\partial \alpha} \right) \log
\tau_{k_{1},k_{2}} - k_{1} = \mathcal{B}_{-1}
\left(\frac{\partial}{\partial \alpha} \right) \log
\BP_{n} - k_{1} ~~
  \\
H_{1}^{+} &=& 4 \left(\frac{\partial}{\partial \alpha}
\right) \log \tau_{k_{1},k_{2}} = 4
\left(\frac{\partial}{\partial \alpha} \right) \log
\BP_{n} + 4 \alpha k_{1} + 4 \frac{k_{1}k_{2}}{\alpha}
   \no\\
H_{2}^{+}& =& 2 \left( \mathcal{B}_{0} - \alpha
\frac{\partial}{\partial \alpha} - 1 - 2 \alpha
\mathcal{B}_{-1} \right)  \bigg(
\frac{\partial}{\partial \alpha} \bigg) \log
\tau_{k_{1},k_{2}} + 4 \alpha k_{1} ~~
   \no\\
 &=&
 2 \left( \mathcal{B}_{0}\!  -  \!\alpha
\frac{\partial}{\partial \alpha}\!  -  \!1 \! -  \!2
\alpha \mathcal{B}_{-1} \right)  \bigg(
\frac{\partial}{\partial \alpha} \bigg) \log \BP_{n} .
 \no \eea}
Subsequently, one repeats exactly the same operations
for the Virasoro and KP-identities
involving the $t_{k}$ and $u_{k}$ variables. From
(\ref{C010}) and (\ref{C003}), one finds on
$\mathcal{L}$:
\bea \label{C014}
&&\hspace*{.7cm}\frac{ \frac{\partial^{2}}{\partial t_{2}
\partial u_{1}} \log \tau_{k_{1}, k_{2}}
}{ \frac{\partial^{2}}{\partial t_{1}
\partial u_{1}}  \log \tau_{k_{1}, k_{2}} } = \frac{\partial}{\partial t_{1}}
\log \left( \frac{\tau_{k_{1}, k_{2}+1}}{\tau_{k_{1},
k_{2}-1}} \right) = -\mathcal{B}_{-1} \log \left(
\frac{\tau_{k_{1}, k_{2}+1}}{\tau_{k_{1}, k_{2}-1}}
\right)   \\
&&\hspace*{.7cm}\frac{ \frac{\partial^{2}}{\partial t_{1}
\partial u_{2}} \log \tau_{k_{1}, k_{2}}
 }{  \frac{\partial^{ 2}}{\partial t_{1}
\partial u_{1}}   \log  \tau_{k_{1},  k_{2}} } = - \frac{\partial}{\partial u_{1}} \log \left(
\frac{\tau_{k_{1},  k_{2}+1}}{\tau_{k_{1},  k_{2}-1}}
\right) = - \left( \mathcal{B}_{-1} +
\frac{\partial}{\partial \alpha} \right) \log \left(
\frac{\tau_{k_{1},  k_{2}+1}}{\tau_{k_{1},  k_{2}-1}}
\right) . \no\eea

So, in (\ref{C014}), acting on the first equation with
$\left( \mathcal{B}_{-1} + \frac{\partial}{\partial
\alpha} \right)$ and on the second equation with
$\mathcal{B}_{-1}$ , then subtracting the two yields:
\begin{displaymath}
\mathcal{B}_{-1} \left( \frac{  \left( \frac{\partial^{
2}}{\partial t_{2} \partial u_{1}} - \frac{\partial^{
2}}{\partial t_{1} \partial u_{2}} \right) \log
 \tau_{k_{1},  k_{2}}
  }{  \frac{\partial^{2}}{\partial t_{1}
\partial u_{1}}   \log  \tau_{k_{1},  k_{2}}  } \right)
 +  \frac{\partial}{\partial \alpha} \left( \frac{
\frac{\partial^{ 2}}{\partial t_{2}
\partial u_{1}}  \log  \tau_{k_{1}, k_{2}}  }{
\frac{\partial^{ 2}}{\partial t_{1}
\partial u_{1}}  \log  \tau_{k_{1},  k_{2}}  } \right)
= 0  , \end{displaymath}
or equivalently (remember the brackets are Wronskians) :
\begin{equation}
-\left\{ \mathcal{B}_{-1} \frac{\partial}{\partial
\beta} \log \tau_{k_{1} k_{2}} \Bigr|_{\mathcal{L}}
,F^{-} \right\} _{\mathcal{B}_{-1}} =  \bigg\{ H_{1}^{-}
, \frac{F^{-}}{2} \bigg\}_{\mathcal{B}_{-1}} + \bigg\{
H_{2}^{-} , \frac{F^{-}}{2}
\bigg\}_{\frac{\partial}{\partial \alpha}} \,\,=:\,\,
G^{-} \label{C015} \end{equation}
in terms of the functions
{
\bea
   \label{C016}  F^{-} &:=&
\frac{\partial^{\,2}}{\partial t_{1}
\partial u_{1}} \,\log \tau_{k_{1} , k_{2}}
 \\
H_{1}^{-} &:=&  2  \left( \,\frac{\partial^{
2}}{\partial t_{2}
\partial u_{1}} -
\frac{\partial^{ 2}}{\partial t_{1} \partial u_{2}}
\right) \log \tau_{k_{1},k_{2}} - 2 \mathcal{B}_{-1}
\frac{\partial}{\partial \beta} \log
\tau_{k_{1} k_{2}}  \no\\
 H_{2}^{-} &:=& 2  \left(
\frac{\partial^{ 2}}{\partial t_{2}
\partial u_{1}} \right)  \log \tau_{k_{1} , k_{2}}  .
 \no\eea}
Using the Virasoro constraints \,(\ref{C002}), \,as well
as (\ref{C020}), we obtain \mbox{explicit} formulae for
\,$F^{-}$, \,$H_{1}^{-}$ and $H_{2}^{-}$ (which do not
contain partials in $\beta$):
    %
 \bea
  F^{-} &=&
-\mathcal{B}_{-1} \left( \mathcal{B}_{-1} +
\frac{\partial}{\partial \alpha} \right) \log
\tau_{k_{1},k_{2}} - k_{2} 
\no\\
&=& -\mathcal{B}_{-1} \left(
\mathcal{B}_{-1} + \frac{\partial}{\partial \alpha}
\right) \log  \BP_{n} - k_{2}  \no\\
H_{1}^{-} &=& -  2 \left( \mathcal{B}_{0} - \alpha
\frac{\partial}{\partial \alpha} + 1 \right) \bigg(
\frac{\partial}{\partial \alpha} \bigg) \log
\tau_{k_{1},k_{2}}  \no\\
  &=&
   -  2 \left(
\mathcal{B}_{0} - \alpha \frac{\partial}{\partial
\alpha} + 1 \right) \bigg( \frac{\partial}{\partial
\alpha} \bigg) \log  \BP_{n} - 4
 \frac{k_{1}k_{2}}{\alpha}\no\\
H_{2}^{-}  &=&  -  2 \left( \mathcal{B}_{0} - \alpha
\frac{\partial}{\partial \alpha} - 1 \right)   \bigg(
\mathcal{B}_{-1} + \frac{\partial}{\partial \alpha}
\bigg) \log \tau_{k_{1},k_{2}} - 4  \alpha  k_{1}
 \no\\ 
  &=&
  -  2 \left( \mathcal{B}_{0} - \alpha
\frac{\partial}{\partial \alpha} - 1 \right) \, \bigg(
\mathcal{B}_{-1} + \frac{\partial}{\partial \alpha}
\bigg) \log
 \BP_{n} .\label{Hminus} \eea 
Equations (\ref{C012}) and  (\ref{C015})\, form a linear
system in  $$\BB_{-1}  \frac{\partial  \log
 \tau_{k_{1} k_{2}}}{\partial  \beta}
\Bigr|_{\mathcal{L}} \qquad\mbox{and}\qquad\BB_{-1}^{2}
 \frac{\partial  \log  \tau_{k_{1} k_{2}}}{\partial
 \beta} \Bigr|_{\mathcal{L}},$$ which can be solved to
extract the quantities:
\bea \mathcal{B}_{-1} \,\frac{\partial \log
\tau_{k_{1},k_{2}}}{\partial \beta} \Bigr|_{\mathcal{L}}
&=& \frac{ G^{-} F^{+} + G^{+} F^{-} }{ -F^{-} \left(
\mathcal{B}_{-1} F^{+} \right)  +\, F^{+} \left(
\mathcal{B}_{-1} F^{-} \right)  }   \label{C034} \\
\mathcal{B}_{-1}^{ 2} \,\frac{\partial \log
\tau_{k_{1},k_{2}}}{\partial \beta} \Bigr|_{\mathcal{L}}
 &=&  \frac{ G^{-} \left( \mathcal{B}_{-1} F^{+}
\right) + G^{+} \left( \mathcal{B}_{-1} F^{-} \right) }{
-F^{-} \left( \mathcal{B}_{-1} F^{+} \right) + F^{+}
\left( \mathcal{B}_{-1} F^{-} \right)  }  . \label{C035}
\eea
Finally, subtracting the second relation from
 $\mathcal{B}_{-1}$ of the first equation,
 establishes the expected PDE (\ref{C017a}) and Theorem \ref{Theo:PDE}.
%
%
%
To prove the second equation (\ref{C021}), set 
$$
 X:=\left. \mathcal{B}_{- 1}\frac{\pl  }{\pl \beta }
    \log \tau_{k_1k_2}\right|_{\LR};
 $$
then the matrix in (\ref{C021}) annihilates the column
$(
      1,
      X,
      \mathcal{B}_{-1} X,
      \mathcal{B}_{-1}^2X
      )^{\top}$, and thus the determinant vanishes, concluding the proof of Theorem
\ref{Theo:PDE}.\end{proof}

\section{A PDE for the transition probability of the $r$-Airy process}
\label{sect6}


\begin{proof}[Proof of Theorem \ref{Theo2}]
 Remember from section \ref{sect1}
 the identity
 \begin{multline*}
\lim_{n\rightarrow \iy}
\BP^{(0,\rho\sqrt{n/2})}_{Br}\left(\mbox{all}~
x_i\left(\frac{1}{1+e^{-2\tau/n^{1/3}}}\right) \in
  \frac{(-\iy, \sqrt{2n}+\frac{x}{\sqrt{2}
  n^{1/6}})}{2\cosh( \tau/n^{1/3})}\right) = \\*
     \lim_{n\rightarrow \iy}
  \BP_{n}\left({\rho}\sqrt{n}e^{\tau /n^{1/3}};
  2\sqrt{n} +\frac{(-\iy,x)}{n^{1/6}}   \right),%
  \end{multline*}
  which for $0\leq \rho\leq 1$ leads by
 Theorem~\ref{Theo1} to a phase transition at $\rho=1$, for which
  the expression above reduces to
  $\BP(\sup \mathcal {A}^{(r)}(\tau)\leq x)$,
  according to ({\ref{r-AiryP}). The
  above scaling suggests the choice $z=n^{-1/6}$ as small
  parameter and considering the map
  $ (\tau,x) \longmapsto(\al,b) , $
given by \bean
 \al={\rho}\sqrt{n}e^{\tau /n^{1/3}}=
    \frac{\rho}{z^3}e^{\tau z^2},\qquad
 b=
   2\sqrt{n} +\frac{x}{n^{1/6}}
   =\frac{2}{z^3}+xz  
 \eean
 with inverse map
  $(\al,b)\longmapsto (\tau,x),  $
 given by
 $$
 \tau=\frac{1}{z^2}\log\left(\frac{\al
 z^3}{\rho}\right),~~~
 x =\frac{b}{z}-\frac{2}{z^4}
 .$$
 Setting
\bea  \label{C024}
 \tilde Q(\tau,x) &:=&
    \log\BP_{n}\left({\rho}\sqrt{n}e^{\tau /n^{1/3}};
  2\sqrt{n} +\frac{(-\iy,x)}{n^{1/6}}   \right)
    \\
   &=&
  \log\BP_{n}\left(\frac{\rho}{z^3}e^{\tau z^2};
   \frac{2}{z^3}+(-\iy,x)z  \right)
   ,\no\eea
yields, via the inverse map, \be
 \tilde Q\left(\frac{1}{z^2}\log\bigl(\frac{\al
 z^3}{\rho}\bigr),\frac{b}{z}-\frac{2}{z^4}
 \right)=\log\BP_n(\al,(-\iy,b)).\label{C025}
  \ee
From Corollary \ref{cor:estimate2} (section \ref{sect2}) it follows that for $z\rg 0$,
 \be
 \tilde
 Q(\tau,x)=Q(\tau,x)+O(z^2),
 \label{6.3}\ee
with $Q(\tau,x)$ independent of $z$.

Since we have shown that $\log \BP_{n} (\alpha,
(-\infty, b))$ satisfies the PDE (\ref{C017a}) of
Theorem~\ref{Theo:PDE}, with $\mathcal{B}_{-1} \equiv
\frac{\partial}{\partial b}$ and $\mathcal{B}_{0} \equiv
b \frac{\partial}{\partial b}$ , it follows that, to get
a PDE for the limiting case, we just need to estimate:
{ \footnotesize \begin{equation}
\left.\left\{\begin{array}{l} \Bigl(F^ + \BB_{-1}G^-+F^-
\BB_{-1}G^+
\Bigr)     \Bigl(F^+ \BB_{-1}F^- -F^- \BB_{-1}F^+ \Bigr) \\
 - \Bigl(F^+ G^- +F^- G^+
\Bigr)   \Bigl(F^+ \BB_{-1}^2F^- -F^- \BB_{-1}^2F^+
\Bigr)
\end{array}\right\}
 \right|_{\footnotesize\begin{array}{l}
     \alpha  \mapsto  \left( \rho / z^{ 3} \right)
      e^{ \tau  z^{2} } \\
     b \,\mapsto  x z  +  \frac{2}{z^{ 3}} \\
     n  \mapsto  \frac{1}{z^{ 6}}\end{array} }
\label{C027}.\end{equation} }
To do this, the various expressions in the bracket must
be computed in terms of the function $\tilde Q (\tau, x)
$.
By (\ref{C024}) and (\ref{C025}), one immediately gets:
$$
 \frac{\partial  \log \BP_{n}}{\partial  \alpha}
= \frac{1}{\alpha z^2} \bigg(\frac{\partial  \tilde
Q}{\partial \tau}\bigg) = \frac{z}{\rho} e^{- \tau z^2}
\bigg( \frac{\partial  \tilde Q}{\partial \tau} \bigg)
 , \qquad \frac{\partial  \log
 \BP_{n}}{\partial  b} = \frac{1}{z}
 \bigg(\frac{\partial \tilde Q}{\partial
  x}\bigg) .
$$
Hereafter, to shorten notation, we will write the partials as 
$$
\tilde Q_{\tau x} \equiv \frac{\partial^{2}}{\partial
\tau
\partial x} \tilde Q ,~\tilde Q_{\tau \tau x}  \equiv
\frac{\partial^{3}}{\partial \tau^{2} \partial x} \tilde
Q,~~ \mbox{ etc.}
$$ 
Without taking a limit yet, but expanding
asymptotically the expressions in powers of $z$, we find
%
%
%
{\footnotesize
 \bean
  \bigg(F^+\BB_{-1}F^- -F^-\BB_{-1}F^+\bigg)
   &=&  \frac{ \tilde Q_{\tau x x} }{\rho z^7}
- \frac{\tau \tilde  Q_{\tau x x} }{\rho z^5}
  +\frac{\tilde
Q_{xxx}(r\rho-\tilde Q_{\tau x})+\tilde Q_{\tau
xx}(\tilde
Q_{xx}+\frac{\tau^2}{2})}{\rho z^3}  + O(\frac{1}{z}) \\
  \bigg(F^+\BB_{-1}^2F^- -F^-\BB_{-1}^2F^+\bigg)
     &=&\frac{ \tilde Q_{\tau x x x} }{\rho z^8}
- \frac{\tau  \tilde Q_{\tau x x x} }{\rho z^6}
 +
   \frac{\tilde Q_{xxxx}(r\rho-\tilde Q_{\tau x})+\tilde
Q_{\tau xxx}(\tilde Q_{xx}+\frac{\tau^2}{2})}{\rho z^4}
 + O(\frac{1}{z^2}). \eean}
In order to compute the expansions of $G^{\pm}$ (which
are respectively defined in (\ref{C012}) and
(\ref{C015}))  and of $\BB_{-1} G^{\pm}$, we need the
asymptotics of $H_i^{\pm}$, as defined in (\ref{Hplus})
and (\ref{Hminus}). In the end, one finds
%
%
{\footnotesize \bean
 F^+ G^- + F^- G^+
 &=&  \frac{2 \tilde Q_{\tau x
x} }{\rho^2} \bigg[  2 \tilde Q_{\tau x}  +
r\left(\rho-1\right)^2 \bigg] \frac{1}{z^{10}}
 \\ &&-  \frac{2}{\rho^3 z^{8}}
 \left(   \begin{array}{l}
   \tilde Q_{\tau x}  \bigg( \left(\rho-1\right)
   \tilde  Q_{\tau \tau x}
    + 4 \rho \tau
 \tilde  Q_{\tau x x}  \bigg)
    \\ \\
  -  2 r \rho  (\rho-1 ) ( \tilde Q_{\tau \tau x}
     +
       \tau    \tilde Q_{\tau x x})
     \end{array}
     \!\! \right)
    \! +\! \frac{\mathcal{T}}{z^{6}}\! +O(\frac{1}{z^4})
     \\
     F ^+ ~ \BB_{-1}G^-+F^-\BB_{-1}G^+&=&  \frac{2  \tilde Q_{\tau x x x}
}{\rho^2} \bigg[ 2  \tilde Q_{\tau x}  +
r\left(1-\rho\right)^2 \bigg] \frac{1}{z^{11}}
 \\ &&- \frac{2}{\rho^3 z^{9}}
 \left( \!\!  \begin{array}{l}
   \tilde Q_{\tau x}  \bigg( \left(\rho\!-\!1\right) \tilde  Q_{\tau \tau x x}
   + 4 \rho \tau
  \tilde Q_{\tau x x x}  \bigg)
    \\ \\
    -  2 r \rho  (\rho-1 )  (\tilde  Q_{\tau \tau x x}
     + \tau  \tilde Q_{\tau x x x})
     \end{array}
     \! \!\!\!\right)\!+\! \frac{\mathcal{T}'}{z^7}
   \!\! + \!\! O(\frac{1}{z^{5}})
  ,\eean}
where $\mathcal{T}$ and $\mathcal{T}^\prime$ are given by the
following expressions and where $\mathcal{T}_1$ and $\mathcal{T}_1'$ denote further expressions in the derivatives of
$\tilde Q$,
 {\footnotesize \bean  \mathcal{T}
 &=&
 2r^2\tilde Q_{xxx}
 +2r(\tilde Q_{\tau xx}(\tilde Q_{xx}+\tau ^2-x)+2\tau \tilde
 Q_{\tau \tau x}+2\tilde Q_{\tau x}+\tilde Q_{\tau \tau \tau })
 -2\tilde Q_{\tau x}^2(\tilde Q_{xxx}+2)\\
&&
  -2\tilde Q_{\tau x}(\tau \tilde
  Q_{\tau \tau x}+\tilde Q_{\tau \tau \tau }-\tilde Q_{\tau xx}
  (\tilde Q_{xx}+4\tau ^2+x))
 +2\tilde Q_{\tau xx}\tilde Q_{\tau }+ \tilde Q_{\tau \tau }
  \tilde Q_{\tau \tau x}
   +(\rho-1)\mathcal{T}_1,
 \eean}
 {
 \bean \mathcal{T}'
  \!&=&\!2r^2 \tilde Q_{xxxx}
 +2r(\tilde Q_{\tau xxx}(\tilde Q_{xx}+\tau ^2-x)+2\tau \tilde
 Q_{\tau \tau xx}+\tilde Q_{\tau xx}+\tilde Q_{\tau \tau \tau x})
 \\
 &&-2\tilde Q_{\tau x}^2\tilde Q_{xxxx}-2\tilde Q_{\tau x}
 (\tau \tilde Q_{\tau \tau xx}+\tilde
 Q_{\tau \tau \tau x}+2\tilde Q_{\tau xx}
 -\tilde Q_{\tau xxx}
  (\tilde Q_{xx}+4\tau ^2+x))
   \\
  &&
+  \tilde Q_{\tau \tau x}^2+ \tilde Q_{\tau \tau }\tilde
Q_{\tau \tau xx}
  -\tilde Q_{\tau xx}\tilde Q_{\tau \tau \tau }
  +2\tilde Q_{\tau xxx}\tilde Q_{\tau }+(\rho-1)\mathcal{T}_1'.
\eean}}
%
%
 %
Consequently, using the two leading orders for the
expressions above, one obtains for small $z$:
\be \label{expansion6}\ee
\vspace*{-1cm}
{\footnotesize\bean
 0 &=&  \!\!\!\!\!\!\!\left.\left\{\!\!\begin{array}{l}
\Bigl(F^+\BB_{-1}G^-+F^-\BB_{-1}G^+ \Bigr)
\Bigl(F^+\BB_{-1}F^- -F^-\BB_{-1}F^+ \Bigr)  \\
 \,-\,
 \Bigl(F^+ G^- +F^- G^+
\Bigr) \Bigl(F^+\BB_{-1}^2F^- -F^-\BB_{-1}^2F^+ \Bigr)
\end{array}\!\!\!\right\}
 \right|_{\begin{array}{l}
     \alpha \,\mapsto\, \left( \rho / z^{ 3} \right)  e^{ \tau \,z^{2} } \\
     b  \mapsto  x z + \frac{2}{z^{ 3}} \\
     n  \mapsto  \frac{1}{z^{ 6}}\end{array} }
 \\  
 &=& \frac{4(\rho-1)}{\rho^4 z^{16}}
     \bigg( 2 r\rho  -  \frac{\partial^{2}\tilde Q}{\partial \tau \partial x}
    \bigg)  \left\{\frac{\partial^{3}  \tilde Q}{\partial \tau^2 \,\partial x} \,,\,
   \frac{\partial^{3}\tilde  Q }
   {\partial \tau  \partial x^2} \right\}
   _{x}  +
    \frac{\mathcal{E}(\tilde Q)
    +(\rho-1)\mathcal{F}(\tilde Q)}{z^{14}}
    + O\left(\frac{1}{z^{12}}\right) ,
   \no
    \eean }
where $\mathcal{E}(\tilde Q)$ is given by
  \bea     \mathcal{E}(\tilde Q) &=&  2 \bigg( r - \frac{\partial^{2} \tilde
Q}{\partial \tau
\partial x} \bigg)^{2} \bigg\{\frac{\partial^{3}
\tilde Q}{\partial \tau
 \partial x^2} ,  \frac{\partial^{3}   \tilde Q}{\partial x^3}
 \bigg\}_{x} \no\\
 & &+ 2 \bigg( r - \frac{\partial^{2}  \tilde Q}
 {\partial \tau  \partial x}
\bigg)  \bigg\{ \frac{\partial^{3} \tilde  Q}{\partial
\tau
 \partial x^2}  ,  \frac{\partial}{\partial \tau}
\Bigg( \frac{\partial}{\partial \tau}  \bigg(
\frac{\partial \tilde  Q}{\partial \tau} + \tau
\frac{\partial
 \tilde Q}{\partial x} \bigg)  -  x \frac{\partial^2
 \tilde Q}{\partial x^2} \Bigg)
 \bigg\}_{x} \no\\
 &&+ \bigg\{ \frac{\partial^{3} \tilde  Q}
 {\partial \tau  \partial x^2}
 ,
\frac{\partial^{3}  \tilde Q}{\partial \tau^2 \partial
x} \bigg( 2r \tau + \frac{\partial^2   \tilde
Q}{\partial \tau^2} \bigg) \bigg\}_{x} +
\bigg(\frac{\partial^3 \tilde Q}{\partial \tau
\partial x^2}\bigg)^{2}  \frac{\partial}{\partial \tau}
\bigg( \frac{\partial^2   \tilde Q}{\partial \tau^2} + 2
 \frac{\partial   \tilde Q}{\partial x} \bigg)  ,
 \label{PDE6}   \eea
%
%
where $\mathcal{F}(\tilde Q)$ is a similar expression,
that will not be needed, and where the bracket is a
Wronskian associated to the ``space''
operator ${\partial}/{\partial x}$. 

\vspace{.4cm}

Hence, \underline{\em for $0\leq \rho < 1$}, taking the
limit $z\rightarrow 0$ above and using (\ref{6.3}),
yields the equation
 $$\bigg( 2 r\rho  -  \frac{\partial^{2}
Q}{\partial \tau \partial x}
    \bigg)  \left\{\frac{\partial^{3}    Q}{\partial \tau^2
    \partial x} ~,~
   \frac{\partial^{3}  Q }
   {\partial \tau  \partial x^2} \right\}
   _{x}  =0
   $$
which is trivially satisfied; indeed, from other
considerations we know that $e^Q$ is the Tracy-Widom
distribution, which of course is $\tau$-independent.


However, \underline{\em in the critical case $\rho=1$}, the
leading term has order $1/ z^{14}$, with coefficient
$\mathcal{E}(\tilde Q(\tau,x;z))$, as in (\ref{PDE6}). Then
taking a limit in (\ref{expansion6}) when $z\rg 0$ and
using (\ref{6.3}), forces upon us the equation
$$\lim_{z\rg 0}\mathcal{E}(\tilde Q(\tau,x;z))
=\mathcal{E}(Q(\tau,x))=0,$$
 which an easy computation shows can be written
 as
 \begin{equation} \label{one-time PDE}
\begin{split}
&\left\{\frac{\pl^3 Q}{\pl \tau\pl x^2},
  \left[\begin{array}{l}
   r^{2}
  \frac{\pl^3 Q}{\pl x^3}
 +r\left(\frac{\pl^2}{\pl \tau^2}(\frac{\pl Q}{\pl \tau}
  +2\tau \frac{\pl Q}{\pl x}
 )
   -\frac{\pl^3 (x Q)}{\pl \tau\pl x^2}
   +2\left\{
   \frac{\pl^2Q}{\pl \tau\pl x},\frac{\pl^2Q}{\pl x^2}
   \right\}_x
 \right) \\
  +\frac{1}{2}
 \frac{\pl^3Q}{\pl \tau^2\pl x}\frac{\pl^2 Q}{\pl \tau^2}
  -\frac{\pl^2Q}{\pl \tau\pl x}\frac{\pl^3Q}{\pl \tau^3}
  +\left(\frac{\pl^2Q}{\pl \tau\pl x}\right)^2\frac{\pl^3Q}
  {\pl x^3}+
  \left\{ \frac{\pl^2Q}{\pl \tau\pl x},
   \frac{\pl (\tau Q)}{\pl \tau^2}
  \right\}_x
  \end{array} \right]
  \right\}_x
  \\
& -\frac{1}{2}\left( {\frac {\pl
^{3}Q}{\pl \tau\pl x^2}} \right) ^{2}\left({\frac
{\partial ^{3}Q}{\partial {\tau}^{ 3}}}
 -4\frac{\pl^2Q}{\pl \tau\pl x}  \frac {\pl ^{3}Q}{\pl
x^3}\right)=0,
\end{split}
\end{equation}   
 and further rewritten as
   equation (\ref{PDE0}), ending the proof of Theorem~0.4.
\end{proof}
%

\begin{remark} This ``phase transition'' at $\rho=1$ is
completely analogous to the results found in \cite{BBP}
and \cite{Peche} for small rank perturbations of random
Hermitian matrices.
\end{remark}

\section{Remote past asymptotics}\label{sect7}

The aim of this section is to study the behavior of the
$r$-Airy process $\mathcal {A}^{(r)}(t)$ for $t\rg -\iy$, as
stated in Theorem \ref{Theo3}. In this section $\tau$
will be systematically replaced by $t$. This theorem
will be rephrased as Theorem \ref{theo:one-time
expansion}, which includes some additional details.




\begin{theorem}\label{theo:one-time expansion}

%
%
The log of the probability of the $r$-Airy process $
 Q(t,x)= \\  \log\BP  (\sup\mathcal {A}^{(r)}(t)\leq x)
$ admits an asymptotic expansion, as $t\rightarrow
-\iy$, having the following form
$$Q(t,x)=Q_0(x)+\frac{1}{t}Q_1(x)+\frac{1}{t^2}Q_2(x)+...
,$$ for the initial condition
%
%
$$
\lim _{t\rightarrow -\iy}Q(t,x):=Q_0(x):=
 \log\BP(\sup\mathcal {A}(t)\leq x)=
-\int_x^{\iy}(\al-x)g^2(\al)d\al ,$$
 and where
{\be %
 \label{Qi}\ee}
\vspace*{-1cm}{\footnotesize\bean Q_1&=&rQ'_0
,~\quad Q_2=\frac{r^2}{2!}Q''_0
 ,~\quad Q_3=\frac{r^3}{3!}Q'''_0+\frac{r}{3 } xQ'_0,
 ~\quad Q_4=
  \frac{r^4}{4!}
   Q_0^{iv}+\frac{r^2}{3}xQ_0''+\frac{7r^2}{12} Q_0',~~~
 \\ %
%
   Q_5&=&
   \frac{r^5}{5!}
   Q_0^{v}+\frac{r^3}{3~2!}xQ_0'''+\frac{7r^3}{12} Q_0''
   +\frac{r}{5} \mathcal{F}_5
~~~~~~~~~~~~~~~~~~~~~~~~~~~~~~~~~~~~~~~~~~~~~~~~
\\
Q_6&=&\frac{r^6}{6!}
   Q_0^{vi}+\frac{r^4}{3~3!}xQ_0^{iv}+
   \frac{7r^4}{12~2!} Q_0'''+{\frac {r^2}{5}}  \left(
 \mathcal{F}_5'+\frac{5}{18}(x^2Q_0''+13(x+c_6)Q_0' ) \right)
  \\
 \vdots\\
Q_n&=&
   \frac{r^n}{n!}
   Q_0^{(n)}+\frac{r^{n-2}x}{3~(n-3)!} {Q_0^{(n-2)}}
    +\frac{7r^{n-2}}{12(n-4)!}  {Q_0^{(n-3)}}
   +\sum_{i=2}^{\left[\frac{n-1}{2}\right]}{r^{n-2i}}
   Q_{n,n-2i}(x)
   %
   \eean}
   for some constant $c_6$ and with
    $$
\mathcal{F}_5:=  x^2Q_0'+4xQ_0+Q_0^{\prime 2}
   +10\int_x^{\iy}Q_0
   -
   6\int_x^{\iy}dy\int_y^{\iy}du Q_0^{\prime\prime 2}
   .$$
Also
   \bean
   \BP(\sup\mathcal {A}^{(r)}(t)\leq x)&=&
    \BP\left(\sup\mathcal {A}(t)\leq (x+\frac{r}{t})(1+\frac{r}{3t^3})+
    \frac{r^2}{4t^4}\right)
    \\
    &&~~~~~~~~~~~~~~~~~~~~\times ~\left(1+\frac{r\mathcal{F}_5}{5t^5}
     +O(\frac{1}{t^6})\right)
     .\eean
     The mean and variance of the right edge of the process
      behave as
      \bean
      {\mathbb E}
       (\sup \mathcal {A}^{(r)}(t))&=&
 {\mathbb E}(\sup\mathcal {A}^{(0)}(t))\left(1-\frac{r}{3t^3}\right)
 -\frac{r}{t}-\frac{r^2}{4t^4} +O(\frac{1}{t^5})
      \\
       \mbox{\em var}(\sup \mathcal {A}^{(r)}(t))
       &=&
 \mbox{\em var}(\sup\mathcal {A}^{(0)}(t))\left(1-\frac{2r}{3t^3}\right)
  +O(\frac{1}{t^5}).\eean

     \end{theorem}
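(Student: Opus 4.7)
The approach is to substitute the formal power series ansatz $Q(t,x) = \sum_{i \geq 0} Q_i(x)/t^i$ into the fourth-order nonlinear PDE (\ref{PDE0}) of Theorem \ref{Theo2} and match powers of $1/t$. Since $Q_0$ is independent of $t$, every $\pl/\pl\tau$ raises the order of $1/t$ by one, so at each order $1/t^N$ the PDE reduces to a linear ODE in $x$ for $Q_N$ whose forcing depends only on $Q_0,\ldots,Q_{N-1}$. The initial condition $Q_0(x) = -\int_x^{\infty}(\alpha-x)g^2(\alpha)d\alpha$ is the log of the Tracy--Widom distribution, valid because in the absence of the outlier effect the $r$-Airy process collapses to the stationary Airy process (consistent with the $\rho < 1$ branch in the proof of Theorem \ref{Theo1}). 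Integration constants at each step are fixed by requiring $Q_i(x)\to 0$ as $x\to +\infty$, equivalent to the normalization $\BP\to 1$; the super-exponential decay of $Q_0^{(k)}$ from Painlev\'e II asymptotics ensures this unambiguously determines each $Q_i$.

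The concrete matching proceeds order by order. At the first few orders one obtains $Q_1' = rQ_0'$, then ODEs yielding $Q_2 = (r^2/2)Q_0''$, $Q_3 = (r^3/6)Q_0''' + (r/3)xQ_0'$, and so on, using the Painlev\'e II relation $g''=\alpha g+2g^3$ (equivalently $Q_0''=-g^2$) to simplify at orders $\geq 5$, where antiderivatives such as $\int_x^\infty Q_0$ and $\int_x^\infty\int_y^\infty Q_0''{}^2$ enter the definition of $\mathcal{F}_5$. The pattern that emerges is compatible with the compact form
$$Q(t,x) = Q_0\!\left((x+\tfrac{r}{t})(1+\tfrac{r}{3t^3}) + \tfrac{r^2}{4t^4}\right) + \frac{r}{5t^5}\mathcal{F}_5 + O(t^{-6}),$$
which I would verify by Taylor-expanding $Q_0$ about $x$ in this composition and matching the coefficient of each power of $1/t$: the shift $r/t$ accounts for the leading $(r^i/i!)Q_0^{(i)}$ term of $Q_i$, the dilation $1+r/(3t^3)$ accounts for the $(r^{i-2}/(3(i-3)!))xQ_0^{(i-2)}$ corrections, and the quadratic shift $r^2/(4t^4)$ accounts for the $(7r^{i-2}/(12(i-4)!))Q_0^{(i-3)}$ terms.

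The mean and variance formulas then follow by exponentiating and computing. Writing $F(y) := \BP(\sup\mathcal{A}(t)\leq y) = e^{Q_0(y)}$ and letting $y(x) = (x+r/t)(1+r/(3t^3)) + r^2/(4t^4)$, the factorization gives $\BP(\sup\mathcal{A}^{(r)}(t)\leq x) = F(y(x))(1 + O(t^{-5}))$. Inverting, the random variable $\sup\mathcal{A}^{(r)}(t)$ is distributionally equal to $(1+r/(3t^3))^{-1}(Y - r^2/(4t^4)) - r/t + O(t^{-5})$, where $Y$ has distribution $F$. Taking expectation and variance and expanding $(1+r/(3t^3))^{-1} \approx 1 - r/(3t^3)$ (resp.\ its square $1 - 2r/(3t^3)$) yields the stated asymptotics.

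The main obstacle is the algebraic bookkeeping in the recursion: the fourth-order nonlinear PDE produces a large number of terms at each order of $1/t$, and carefully identifying which combinations reduce to total derivatives of $x$-integrable quantities is delicate, particularly at order $1/t^5$ where the non-elementary piece $\mathcal{F}_5$ appears and where the Painlev\'e II identity must be invoked repeatedly to trade $x$-dependence against $g^2$. An independent consistency check, which I would perform as cross-validation, is to compute the asymptotic series of $\log\det(I - K_t^{(r)})_{(x,\infty)}$ directly from the kernel expansion in Lemma \ref{lemma:kernels}; the coefficients of that series must agree order-by-order with the PDE-derived $Q_i$, and the symmetry $(r,t)\mapsto(-r,-t)$ built into the PDE provides a further consistency constraint on the constants.
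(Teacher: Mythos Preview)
Your overall strategy---substituting the ansatz $Q=\sum Q_i/t^i$ into the PDE (\ref{PDE0}) and matching powers---is exactly the paper's approach, and your derivation of the mean and variance from the shifted-argument form is correct. However, there is a genuine gap in how you fix the integration constants.

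You claim that the decay condition $Q_i(x)\to 0$ as $x\to +\infty$ ``unambiguously determines each $Q_i$.'' It does not. At order $n$ the leading recursion reduces to a Wronskian equation of the form $\{Q_0''',R_n''\}_x=0$, whose general solution is $R_n=c_n Q_0'+\alpha x+\beta$. The decay condition kills $\alpha$ and $\beta$, but since $Q_0'(x)\to 0$ super-exponentially as well, the constant $c_n$ remains completely free. The same phenomenon recurs at every subleading order in $r$. Thus the PDE together with decay at $+\infty$ produces a family of solutions, not a unique one.

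What the paper actually does to pin down these constants is precisely what you relegated to a ``cross-validation'': it computes, via the kernel expansion (Lemma~\ref{lemma:kernels}) and the Fredholm-determinant expansion (Lemma~\ref{lemma:F}), the \emph{precise} leading $x\to\infty$ asymptotics of each $Q_n$ in terms of the Airy function (e.g.\ $Q_1=r(A'^2-AA'')+O(A^4)$), and then matches these against the PDE solution to force $c_n=0$ at each step. This kernel input is not optional redundancy---it is the mechanism that selects the correct solution from the one-parameter family left by the PDE. Your proof sketch needs to promote this step from afterthought to essential ingredient.
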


\bigbreak

\noindent Remember the $r$-Airy kernel
   $$
 K^{(r)}_t(u,v)=\int_0^{\iy}
 dw A_r^-(u+w; t)A_r^+(v+w; t)
 ,$$ as in (\ref{r-kernel}),
 where $A^{\pm}_r(u;\tau)$ is given by (\ref{k-Airy0}),
%
%
 where $C$ is a contour running from $\iy
 e^{5i\pi/6}$ to $\iy
 e^{i\pi/6}$, such that $-it$ lies above the contour.
 In this section one lets $t\rg -\iy$, which, of course,
implies that $-it$ will remain above the contour $C$ and thus
this limit is compatible with the contour just
mentioned. Letting $t\rg +\iy$ would require a drastic
change of the functions $A^{\pm}_r$.
 In this section the subscript $t$ will occasionally be omitted from
 the $r$-Airy kernel $K^{(r)}_t(u,v)$.
Note that $K^{(0)}_t(u,v)=K^{(0)}(u,v)$ is the Airy
kernel, which is independent of $t$,
 $$K^{(0)}(u,v):=\frac{A(u)A'(v)-A'(u)A(v)}{u-v}
  =
 \int^{\iy}_{0}A(w+u)A(w+v)dw
 ,$$
  where $$A(u)=A_0^{\pm}(u)=\int_C   e^{\frac{1}{3} ia^3+iau }
 \frac{da}{2\pi}$$ is the Airy function, satisfying the
 ordinary
differential equation $ A'' (u)=uA(u)$ and behaving
asymptotically as
 \be
A(x)=\frac{e^{-\frac{2}{3}x^{3/2}}}{2\sqrt{\pi}x^{1/4}}
(1+\sum_1^{\iy}\frac{\al_i}{(x^{3/2})^i}+\ldots),~~~\mbox{as
$x\rg \iy$.}
\label{Airy-asymptotics}
\ee
 The ODE and $\lim_{x\rg \iy}A(x)=0$ imply the following formulae,
upon differentiation by $x$,
 \bea 
  \int_x^{\iy}A^2(u)du&=&(A^{'2}-A A'')(x) \no\\
  \int_x^{\iy}A^{'2}(u)du&=&
  -\frac{1}{3}\left( (A^2)'+x(A^{'2}-AA'')\right)(x)
. \label{RemF}\eea

\noindent Also remember the Tracy-Widom distribution
\cite{TW-Airy}
   \bean
      \det
  \left(I-K^{(0)}\right )_x=\exp\left(-\int^{\iy}_{x}(\al-x)g^2(\al)
d\al \right)
, \eean
 where
  $g(\al)$ is the Hastings-MacLeod solution (\ref{Painleve})
   of Painlev\'e II.
%
%
The following shorthand notation will be used for
integers $\ell\geq 1$,
$$
O(A^\ell)=O\left(  \left(\frac{
  e^{-\frac{2}{3}  x^{\frac{3}{2}}}}{2\sqrt \pi x^{1/4}}
  \right)^\ell x^k\right),~~\mbox{for $x\rg \iy$},
   $$
whatever be the power $k\in {\mathbb R}$.

\begin{lemma}\label{lemma:Qprime}
Given
$$
Q_0(x)=\log \det \left(I-K^{(0)}\right
)_x=-\int_x^{\iy}(\al-x)g^2(\al)d\al
,$$
one checks \bean
 Q_0'(x)&=&\int_x^{\iy}
g^2(u)du=A^{'2}(x)-A''(x)A(x)
 +O(A^3)%
  \\
   Q_0^{(n)}(x)&=&-(g^{2 })^{(n-2)}(x)
   = -(A^{2})^{(n-2)}(x)+O(A^3),~~~~\mbox{for $n\geq 2$}.
  \eean
 \end{lemma}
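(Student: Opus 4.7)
The plan is to split the verification into two independent tasks: first, derive the two exact differentiation identities $Q_0'(x)=\int_x^{\iy} g^2$ and $Q_0^{(n)}(x)=-(g^2)^{(n-2)}$, and second, show that $g$ can be replaced by the Airy function $A$ at the cost of an $O(A^3)$ error.

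For the first task, I would differentiate $Q_0(x)=-\int_x^{\iy}(\al-x)g^2(\al)d\al$ by the Leibniz rule. The boundary term vanishes because the integrand $(\al-x)g^2(\al)$ is zero at $\al=x$, so only the $x$-derivative of the integrand survives, giving $Q_0'(x)=\int_x^{\iy} g^2(\al)\,d\al$. A further differentiation yields $Q_0''(x)=-g^2(x)$, from which $Q_0^{(n)}(x)=-(g^2)^{(n-2)}(x)$ follows immediately for $n\geq 2$.

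For the second task, the key input is the comparison $g(x)=A(x)+h(x)$ with $h(x)=O(A^3)$ as $x\to\iy$. This is standard for the Hastings--MacLeod solution: substituting into Painlev\'e~II and using $A''=xA$ gives $h''-xh=2(A+h)^3$, whose right-hand side is dominated by $2A^3$ in the regime $h\ll A$; a variation-of-constants argument against the decaying Airy solution then confirms $h=O(A^3)$. Consequently $g^2=A^2+O(A\cdot A^3)=A^2+O(A^4)$. Because derivatives of $A$ only pick up polynomial factors in $x$ (since $A^{(k)}(x)$ retains the exponential decay rate $e^{-\frac{2}{3}x^{3/2}}$), the same bound $(g^2)^{(k)}=(A^2)^{(k)}+O(A^3)$ propagates to all derivatives, since $A^4 x^m=O(A^3)$ for any $m$ under the bookkeeping convention just introduced.

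For $Q_0'$, I would then substitute into the first identity in (\ref{RemF}), namely $\int_x^{\iy} A^2(u)du=(A'^2-AA'')(x)$, and bound the tail $\int_x^{\iy}(g^2-A^2)du=\int_x^{\iy} O(A^4)du$ by integration by parts against $\frac{d}{du}e^{-\frac{8}{3}u^{3/2}}$, which gives a quantity much smaller than $A^3(x)$. For $n\geq 2$, I would combine $Q_0^{(n)}=-(g^2)^{(n-2)}$ with the derivative bound on $g^2-A^2$ established above. The main (and only nontrivial) obstacle is the Painlev\'e~II comparison $g-A=O(A^3)$; once this is in hand, all the remaining manipulations reduce to bookkeeping with the super-exponential decay of $A$ and the identities (\ref{RemF}).
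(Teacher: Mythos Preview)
Your approach is correct and matches the paper's: differentiate $Q_0$ directly for the exact identities in $g$, then invoke the Hastings--McLeod comparison $g\approx A$ together with the identity $\int_x^\infty A^2 = A'^2 - AA''$ from (\ref{RemF}). The only difference is cosmetic: the paper simply cites Hastings--McLeod for $g - A = O\!\left(e^{-\frac{4}{3}x^{3/2}}/x^{1/4}\right) = O(A^2)$ (remarking that this asymptotic can be differentiated and integrated), whereas you sketch the slightly sharper $h=O(A^3)$ via variation of constants; either bound gives $g^2 - A^2 = O(A^3)$, so the end result is the same.
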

\begin{proof} The estimates follow from (\ref{RemF}) and an
improved version of the estimate (\ref{Painleve}) by
Hastings-McLeod \cite{HM}, namely:
$$
g(x)= \mbox{A}(x)+ { 
O}\left(\frac{e^{-\frac{4}{3}x^{3/2}}}{x^{1/4}}\right)
\mbox{~for~}x\nearrow\iy,
$$
which is to be interpreted as a genuine asymptotic formula; i.e., it can be both integrated and differentiated.
 \end{proof}

  \bigbreak
\noindent For future use, one needs the following
estimates for the Airy function:

\begin{lemma}\label{lemma:Airy}For $x\rg \iy$, one has
the estimates
\bean
  \int_x^{\iy} A(u)du&=&
 \frac{e^{-\frac{2}{3}x^{3/2}}}{2\sqrt{\pi} x^{3/4}}
  (1+\sum_1^{\iy}\frac{c'_i}{(x^{3/2})^i} )=O(A)
\\
\int_x^{\iy}\!\! A^2(u)du&=&(A^{'2}-A''A)(x)=
 \frac{e^{-\frac{4}{3}x^{3/2}}}{8\pi x}
  (1+\sum_1^{\iy}\frac{c''_i}{(x^{3/2})^i} )=O(A^2)
 .\eean


\end{lemma}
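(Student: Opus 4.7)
Both asymptotic expansions are derived from the known Poincar\'e asymptotic series (\ref{Airy-asymptotics}) for $A(x)$, together with the Airy ODE $A''=xA$ and the identities (\ref{RemF}).

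For the second integral, the identity $\int_x^\infty A^2(u)\,du = (A'^2-A''A)(x)$ from (\ref{RemF}) already does all the structural work — no integration is required, only substitution of asymptotics. Differentiating (\ref{Airy-asymptotics}) term by term (justified by the standard theorem permitting termwise differentiation of Airy-type asymptotics), and separating the contribution from $E'=-x^{1/2}E$ with $E:=e^{-\frac{2}{3}x^{3/2}}$ from the contribution coming from differentiating the polynomial prefactor $x^{-1/4}$, one obtains
\[
A'(x) = -\frac{x^{1/4}E}{2\sqrt\pi}\Bigl(1+(\alpha_1+\tfrac14)x^{-3/2}+O(x^{-3})\Bigr),
\]
while $A''(x)=xA(x)$ is immediate from the ODE. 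Then $A'^2(x)$ has leading behavior $\frac{x^{1/2}E^2}{4\pi}(1+(2\alpha_1+\tfrac12)x^{-3/2}+\ldots)$, and $A''(x)A(x) = xA^2(x) = \frac{x^{1/2}E^2}{4\pi}(1+2\alpha_1 x^{-3/2}+\ldots)$. Subtracting, the $O(x^{1/2}E^2)$ terms cancel exactly and the difference equals $\frac{x^{1/2}E^2}{4\pi}\cdot\tfrac12 x^{-3/2}(1+\ldots) = \frac{E^2}{8\pi x}(1+\ldots)$, matching the claim. Iterating this expansion to all orders yields the full formal series $\sum c''_i/x^{3i/2}$.

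For the first integral, I would use the ODE to rewrite $A(u)=A''(u)/u$ and integrate by parts:
\[
\int_x^\infty A(u)\,du \;=\; \int_x^\infty \frac{A''(u)}{u}\,du \;=\; -\frac{A'(x)}{x}\;+\;\int_x^\infty \frac{A'(u)}{u^2}\,du.
\]
The boundary term supplies the leading behavior: inserting the expansion of $A'$ derived above, $-A'(x)/x \sim \frac{E}{2\sqrt\pi\,x^{3/4}}(1+O(x^{-3/2}))$, which is exactly the advertised leading term. A second integration by parts on the remainder, moving $1/u^2$ onto $A'$ via $A'(u) = (A(u))'$, produces a boundary term of order $A(x)/x^2$ — one full power of $x^{-3/2}$ smaller than the previous boundary term — and a still smaller integral remainder. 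Iterating this process generates the full asymptotic series $\sum c'_i/x^{3i/2}$.

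The main obstacle is the exact leading-order cancellation in $A'^2-A''A$: both terms share the same exponential $E^2$ \emph{and} the same polynomial prefactor $x^{1/2}$, so the positive quantity $\int_x^\infty A^2$ — naively one might guess $O(A^2 \cdot x^{1/2})$ by the trivial tail estimate — actually survives only at the first subleading correction and is therefore $O(A^2/x^{1/2})$. This forces one to push the expansion of $A'$ one order beyond the crude $A'\sim -x^{1/2}A$, where the crucial $+\tfrac14 x^{-3/2}$ correction coming from differentiating $x^{-1/4}$ is what produces the $\tfrac12$ in the subleading coefficient and hence the $\frac{1}{8\pi x}$ prefactor. Once this single cancellation is handled, everything else is routine bookkeeping.
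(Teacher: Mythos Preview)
Your argument is correct. Both expansions follow, and your computation of the leading-order cancellation in $A'^2-A''A$ is right.

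However, your route differs from the paper's. The paper does \emph{not} use the identity (\ref{RemF}) to evaluate $\int_x^\infty A^2$; instead it works directly with the asymptotic series for $A^2(u)$, writing
\[
A^2(u)\,du \;=\; \frac{-1}{8\pi u}\Bigl(1+\sum_i \frac{\tilde c'_i}{u^{3i/2}}\Bigr)\,d\bigl(e^{-\tfrac{4}{3}u^{3/2}}\bigr)
\]
and then integrating by parts repeatedly, each step absorbing a boundary term and leaving a strictly smaller remainder. The same procedure (``similarly'') handles $\int_x^\infty A(u)\,du$ after writing $A(u)\,du$ against $d(e^{-\tfrac{2}{3}u^{3/2}})$. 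This is the standard Laplace-type iteration and treats both integrals uniformly, with no ODE or identity needed.

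Your approach buys something for the second integral: by invoking the exact closed form $A'^2-A''A$ you avoid integration altogether and reduce the problem to pure algebra on the asymptotic series for $A$ and $A'$. The price is that you must track the subleading correction to $A'$ carefully to see the cancellation, whereas the paper's method makes the $\tfrac{1}{8\pi x}$ prefactor appear immediately from $A^2(u)\sim \tfrac{E^2}{4\pi u^{1/2}}$ and $d(E^2)=-2u^{1/2}E^2\,du$, with no cancellation to manage. For the first integral your ODE trick $A=A''/u$ is a neat variant, but it amounts to the same boundary-term-plus-smaller-remainder mechanism as the paper's direct integration by parts.
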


\begin{proof} Upon using integration by parts and upon substituting the asymptotic formula (\ref{Airy-asymptotics}) for the Airy function, one computes for
instance,
 \bean
  \int_x^{\iy} A^2(u)du&=&
 \int_x^{\iy}\frac{-1}{8\pi u}
 (1+\sum_1^{\iy}\frac{\tilde c'_i}{(u^{3/2})^i})
d(e^{-\frac{4}{3}u^{3/2}})\\
&=&
 \frac{e^{-\frac{4}{3}x^{3/2}}}{8\pi x}
  (1+\sum_1^{\iy}\frac{\tilde c'_i}{(x^{3/2})^i} )-
  \!\frac{1}{8\pi}\!
   \int_x^{\iy}\frac{e^{-\frac{4}{3}u^{3/2}}}{ u^2}
  (1+\sum_1^{\iy}\frac{\tilde c''_i}{(u^{3/2})^i} ).
\eean Further terms in the expansion can be obtained by
differentiation by parts 
%
and similarly for the first expression, thus ending the proof
of Lemma~\ref{lemma:Airy}.\end{proof}

 Given a kernel $F(y,u)$ acting on
$L^2(E)$ with $E\subset {\mathbb R}$ and a bounded continuous
real function $f$ on $E$, define the norms
\bean \vv F\vv_1=\sup_{y\in E}\int_E
|F(y,u)|du~~~~\mbox{and}~~ ~~\vv f\vv_{\iy}=\sup_{y\in
E}|f(y)| .\eean
 If $f$ is a function of several variables, the sup is
 taken over all variables.
 Then
  \bean
\vv FG\vv_1&\leq&\sup_{y\in E}\int_E dz\int_E du |F(y,u)G(u,z)|\\
\\
  &\leq&\sup_{y\in E}\int_E du |F(y,u)|\sup_{u\in E}\int_E dz|G(u,z)|=\vv
F\vv_1\vv G\vv_1 \eean
Hence\footnote{Obviously the estimate below requires $\vv
F\vv_1<1$. In the application, this is achieved by restricting the domain of the operator to the interval $(x,\iy)$ for sufficiently large $x$.}
\bean \vv \sum_1^{\iy}F^n\vv_1&\leq&\sum_1^{\iy}\vv
F\vv_1^n=\frac{\vv F\vv_1}{1-\vv F\vv_1} \eean
  and
   \bean
\vv Ff\vv_{\iy}=\sup_{y\in E}\int_E |F(y,u)  f(u)|du
&\leq&\sup_{u\in E} |f(u)|\sup_{y\in E}\int_E |F(y,u)|du
=\vv F\vv_1\vv f\vv_{\iy}. \eean
Given the kernel $K^{(0)}$, define the resolvent kernel
$R$ by \be I+R:=(I-K^{(0)})^{-1}\label{resolvent}.\ee
Then readily
\be
 R-K^{(0)}= K^{(0)2}(I+K^{(0)}+K^{(0)2}+\ldots)
  = (I-K^{(0)})^{-1}K^{(0)2}.
 \label{R-K}\ee

 \bigbreak

\begin{lemma}\label{functA}One has the following
estimates \bean
 \vv K^{(0)}\vv_{1}=O(A^2),~~
 \vv K^{(0)}\vv_{\iy}=O(A^2),\qquad\mbox{and}\qquad
 \vv R-K^{(0)}\vv_{\iy}=O(A^4).
\eean
\end{lemma}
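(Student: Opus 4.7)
The plan is to derive all three estimates from a single pointwise factored bound on $K^{(0)}(y,u)$. I would start from the representation $K^{(0)}(y,u) = \int_0^\iy A(w+y)A(w+u)\,dw$ in~(\ref{r-kernel}) and apply Cauchy--Schwarz to obtain
\[
|K^{(0)}(y,u)| \leq \Bigl(\int_y^\iy A^2\Bigr)^{1/2}\Bigl(\int_u^\iy A^2\Bigr)^{1/2}.
\]
Invoking the second estimate of Lemma~\ref{lemma:Airy}, which says $\int_y^\iy A^2 = O(A^2(y))$, and noting that the paper's notation $O(A^\ell)$ absorbs arbitrary polynomial factors in the argument, taking square roots yields the factored pointwise bound
\[
|K^{(0)}(y,u)| \leq O(A(y))\cdot O(A(u)) \qquad \text{for all } y,u \geq x.
\]

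From this factored bound both norm estimates on $K^{(0)}$ follow directly. The $\vv\cdot\vv_\iy$ bound is immediate, since $A$ is monotonically decreasing for large argument and so $\sup_{y,u \geq x} O(A(y))O(A(u)) = O(A^2(x))$. For the $\vv\cdot\vv_1$ bound I would fix $y \geq x$ and integrate in $u$:
\[
\int_x^\iy |K^{(0)}(y,u)|\,du \leq O(A(y)) \int_x^\iy O(A(u))\,du = O(A(y))\cdot O(A(x)),
\]
where the last equality uses the first estimate of Lemma~\ref{lemma:Airy}. Taking the sup over $y \geq x$ contributes a second factor $O(A(x))$, for a total of $O(A^2)$.

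For the resolvent estimate I would use the Neumann-type expansion $R - K^{(0)} = \sum_{n \geq 2}(K^{(0)})^n$ from~(\ref{R-K}) and iterate the factored bound. Each of the $n-1$ intermediate integrations contributes $\int_x^\iy O(A^2(u))\,du = O(A^2(x))$ by Lemma~\ref{lemma:Airy}, while the two free endpoints together give $O(A(y)A(z))$, which is itself $O(A^2(x))$ for $y,z \geq x$. Hence $\vv (K^{(0)})^n\vv_\iy \leq O(A^{2n})$, and for $x$ large enough so that $O(A^2(x)) < 1$ the resulting series $\sum_{n \geq 2} O(A^{2n})$ is a convergent geometric series equal to $O(A^4)$.

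There is no real conceptual obstacle here; the only thing to watch is the careful bookkeeping of the polynomial-in-$x$ prefactors absorbed by the $O(A^\ell)$ shorthand, together with the standing hypothesis that $x$ be large enough that $\vv K^{(0)}\vv_1 < 1$, ensuring both the existence of the resolvent $(I-K^{(0)})^{-1}$ and the convergence of the Neumann series used in the last step.
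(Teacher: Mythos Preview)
Your argument is correct, and the route differs from the paper's in two small but genuine ways. First, for the pointwise bound on $K^{(0)}$ you use Cauchy--Schwarz to obtain the factored form $|K^{(0)}(y,u)|\le O(A(y))\,O(A(u))$, whereas the paper instead exploits the positivity and monotonicity of $A$ on $[0,\iy)$ directly (bounding $A(v+w)\le A(v)$, etc.) and never writes down a factored pointwise bound. Both lead to $\vv K^{(0)}\vv_1,\vv K^{(0)}\vv_\iy=O(A^2)$ via Lemma~\ref{lemma:Airy}. Second, for the resolvent the paper uses the operator identity $R-K^{(0)}=(I-K^{(0)})^{-1}K^{(0)2}$ together with the norm inequalities $\vv Ff\vv_\iy\le\vv F\vv_1\vv f\vv_\iy$ and $\vv(I-K^{(0)})^{-1}\vv_1\le(1-\vv K^{(0)}\vv_1)^{-1}$ set up just before the lemma, arriving at $\vv R-K^{(0)}\vv_\iy\le\frac{\vv K^{(0)}\vv_1}{1-\vv K^{(0)}\vv_1}\vv K^{(0)}\vv_\iy=O(A^4)$ in one stroke; you instead iterate your factored pointwise bound through the Neumann series termwise. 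Your approach is slightly more self-contained, since it bypasses the abstract $\vv\cdot\vv_1$ machinery; the paper's approach is a bit more packaged. One minor remark: your appeal to Lemma~\ref{lemma:Airy} for $\int_x^\iy O(A(u))\,du=O(A(x))$ and $\int_x^\iy O(A^2(u))\,du=O(A^2(x))$ is not literally what that lemma states (it treats $A$ and $A^2$, not arbitrary $O(A^\ell)$ functions), but the same integration-by-parts argument applies, so this is harmless.
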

\begin{proof} Using the fact that $A(u)\geq 0$ is monotonically
decreasing for $u\geq 0$, and setting $E=(x,\iy)$,
\bean \vv K^{(0)}\vv_1&=&\sup_{u\in
(x,\iy)}\int_x^{\iy}dv\left|\int_0^{\iy} dw
~A(u+w)A(v+w)\right|\\
&\leq&\sup_{u\in (x,\iy)}\int_x^{\iy}dv~A(v)\int_0^{\iy} dw
~A(u+w)  \\
 \\
& \leq & \left(\int_x^{\iy}dv~A(v)\right)^2={ O}(A^2)
\eean by Lemma \ref{lemma:Airy}, while
\bean
\vv K^{(0)}\vv_{\iy}=\sup_{u,v\in (x,\iy)}|K(u,v)|
&=&\sup_{u,v\in (x,\iy)}\left|\int_0^{\iy}  A(u+w) A(v+w)dw\right| \\
\\
&\leq& \int_0^{\iy} A(x+w)^2dw\\
\\
&=&\int_x^{\iy}A(w)^2dw={ O}(A^2), \eean also by
Lemma \ref{lemma:Airy}.
%
%
Considering the function $R(\cdot,v)$ for fixed $v$, one
has, using (\ref{R-K}),
\bean \vv R(\cdot,v)-K^{(0)}(\cdot,v)\vv_{\iy}&=&\vv
(I-K^{(0)})^{-1}(K^{(0)2}(\cdot,v))\vv_{\iy} \\
\\
&\leq&\vv (I-K^{(0)})^{-1}\vv_1~\vv K^{(0)2}(\cdot,v)\vv_{\iy} \\
\\
&\leq&\frac{1}{1-\vv K^{(0)}\vv_1}\vv K^{(0)2}(\cdot,v)\vv_{\iy}\\
\\
&\leq&\frac{1}{1-\vv K^{(0)}\vv_1}\vv K^{(0)}\vv_1~\vv
K^{(0)}(\cdot,v)\vv_{\iy}. \eean
Hence, with $\vv K\vv_{\iy}:=\displaystyle{\sup_{u,v \in
E} |K(u,v)|}$,
\vspace{-.4cm}

$$
\vv R-K^{(0)}\vv_{\iy}:=\sup_{u,v\in
E}|R(u,v)-K^{(0)}(u,v)|\leq\frac{\vv K^{(0)}\vv_1}{
1-\vv K^{(0)}\vv_1}\vv K^{(0)}\vv_{\iy}={ O}(A^4) ,$$
ending the proof of Lemma \ref{functA}.
\end{proof}

The next point is to get an asymptotic expansion for the
Fredholm determinant $\det (I-K_t^{(r)})_{x,\iy}$ in
$t$, using the asymptotic expansion  of the kernel
$K_t^{(r)}(u,v)$ in $t$ (Lemma \ref{lemma:kernels}); this
leads to the next Lemma:
%

\begin{lemma}\label{lemma:F}
The following probability for the $r$-Airy process has
an asymptotic expansion in $1/t$ of the following form
 \bean
Q(t,x)&=& \log\BP(\sup\mathcal {A}^{(r)}(t)\leq x)=
\log\det\left(I-K_t^{(r)}\right)_x
= Q_0(x)+\sum^{\iy}_{n=1}\frac{Q_n(x)}{t^n} ,\eean
 where
 \be
 Q_n(x)=
 \sum_{i=0}^{\left[\frac{n-1}{2}\right]}
  r^{n-2i}Q_{n,n-2i}(x)
  =
   -\Tr K_n^{(r)}+O(A^4)
  ,
  \label{Qn}
  \ee
 where $Q_0=\log\BP(\sup\mathcal {A}(t)\leq x)$ and where the $Q_i\rightarrow 0$ and have all their
derivatives $\rightarrow 0$ for $x \rightarrow \iy$.
Moreover, for $x \rightarrow \iy$, one has
{  \bean
Q_1(x)&=& r(A^{'2}-AA''  +O(A^4))\\
Q_2(x)&=& -\frac{r^2}{2}(A^2+O(A^4))\\
Q_3(x)&=& -\frac{r^3}{3!}((A^2)'  +O(A^4))
-\frac{r}{3}
\bigl((A^2)'+3\int_x^{\iy}A'^2(u)du+O(A^4)\bigr)+O(A^4)\\
 \vdots
 \\
Q_n(x)&=& -\frac{r^n}{n!}((A^2)^{(n-2)} +O(A^4))
  \\
  &&
-\frac{r^{n-2}} {(n-3)!} \left( \frac{3n-1}{24}(A^{2})''
-\frac{n-1}{2}(A')^2 +O(A^4)\right)
 ^{(n-4)}
 \\
%
 && 
 +\sum_{i=2}^{\left[\frac{n-1}{2}\right]}
  r^{n-2i}T_{n,n-2i}(A)+O(A^4)
%
  ,\eean}
 with $T_{n,n-2i}(A)=$ quadratic polynomial of $A$
and $ ~A'$, with coefficients depending on $x$ $+$ $
\int_x^\iy $(quadratic polynomial of $A$ and $ ~A'$,
with coefficients depending on $x$).
\end{lemma}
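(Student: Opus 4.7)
The plan is to use a resolvent-type expansion of the Fredholm determinant that isolates the Airy-kernel contribution from the small perturbation
\[
L(u,v):=K^{(r)}_{t}(u,v)-K^{(0)}(u,v)=\sum_{n\geq 1}\frac{K_n^{(r)}(u,v)}{t^n},
\]
given by Lemma \ref{lemma:kernels}. The first step is to write
\[
Q(t,x)-Q_0(x)=\log\det\bigl(I-(I+R)L\bigr)_{(x,\iy)}
=-\sum_{k\geq 1}\frac{1}{k}\,\Tr\bigl((I+R)L\bigr)^k,
\]
with $R$ the resolvent of $K^{(0)}$ from (\ref{resolvent}), and then to read off the coefficient of $1/t^n$ to obtain $Q_n(x)$.

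The second step is to argue that, modulo $O(A^4)$ as $x\to\iy$, only the single-trace term with one factor of $K_n^{(r)}$ survives at order $1/t^n$. From Lemma \ref{functA} one has $R=K^{(0)}+O(A^4)$ in sup norm; moreover each $K_n^{(r)}(u,v)$ is a quadratic expression in the Airy function and its derivatives (shifted in the variables), hence of order $A(u)A(v)$ pointwise on $(x,\iy)^2$. Consequently, by estimating $|\Tr AB|\leq\|A\|_\infty\|B\|_{L^1}$ together with Lemma \ref{lemma:Airy}, the piece $\Tr(R-K^{(0)})K_n^{(r)}$ is $O(A^6)$, while $\Tr K^{(0)}K_n^{(r)}$ is $O(A^4)$ since it is a double integral of a product of four Airy factors. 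For $k\geq 2$, the multi-trace $\Tr\bigl((I+R)L\bigr)^k$ contains at least $k$ kernel factors each contributing an $A^2$-size product, and is therefore $O(A^{2k})\subset O(A^4)$. Combining these estimates yields the central identity $Q_n(x)=-\Tr K_n^{(r)}+O(A^4)$.

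The polynomial structure in $r$ and the decay at $+\iy$ now follow cleanly from Lemma \ref{lemma:kernels}. Since $K_n^{(r)}$ is a polynomial of degree $n$ in $r$ divisible by $r$, with coefficients of $r^{n-j}$ alternating symmetric and skew-symmetric in $(u,v)$ (the top coefficient $r^n$ being symmetric), only the symmetric pieces contribute to the diagonal integral $\Tr K_n^{(r)}=\int_x^{\iy} K_n^{(r)}(u,u)\,du$. Hence $Q_n(x)$ is a polynomial in $r$ containing only the powers $r^n,r^{n-2},\ldots$, which recovers the claimed form $\sum_{i=0}^{[(n-1)/2]} r^{n-2i}Q_{n,n-2i}(x)$. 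Decay of $Q_n$ and all its $x$-derivatives as $x\to+\iy$ is immediate, since each integrand is a polynomial expression in the super-exponentially decaying Airy function and its derivatives.

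The last step is to verify the explicit formulas for $Q_1,Q_2,Q_3,\ldots,Q_n$ by substituting the explicit form of $K_n^{(r)}$ from Lemma \ref{lemma:kernels} into $-\Tr K_n^{(r)}$, setting $u=v$ (which annihilates the skew-symmetric contributions), integrating over $(x,\iy)$, and simplifying using (\ref{RemF}) together with $A''=xA$. The identifications with derivatives of $Q_0$ via Lemma \ref{lemma:Qprime} then produce $Q_1=rQ_0'+O(A^4)$, $Q_2=\tfrac{r^2}{2}Q_0''+O(A^4)$, and the claimed shape for general $n$; the leading $r^n$ piece comes from the $(\partial/\partial w)^{n-1} A(u+w)A(v+w)$ summand of $K_n^{(r)}$, whose diagonal integrates to $\tfrac{r^n}{n!}Q_0^{(n)}$ modulo $O(A^4)$, while the $r^{n-2}$ piece comes from the second subleading line. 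The residual pieces $T_{n,n-2i}(A)$ for $i\geq 2$ arise from integrating the lower-order-in-$r$ terms of $K_n^{(r)}$, whose $w$-derivatives evaluated at $w=0$ produce quadratics in $A,A'$ with $x$-dependent polynomial coefficients and, after partial integration, leftover $\int_x^{\iy}(\cdot)$ contributions. The main obstacle I anticipate is the combinatorial bookkeeping: tracking the lower-order-in-$r$ coefficients of $K_n^{(r)}$ (the ones compressed into ``polynomial of degree $n-3$ in $r$'' in Lemma \ref{lemma:kernels}) precisely enough to confirm the asserted quadratic-plus-$\int_x^{\iy}$(quadratic) shape of $T_{n,n-2i}$ and to recover the constants $7/12$, $3n-1/24$, etc.\ that appear in the expansion.
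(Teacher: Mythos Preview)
Your overall strategy---factoring $I-K_t^{(r)}=(I-K^{(0)})(I-(I+R)L)$, expanding the log, and isolating $-\Tr K_n^{(r)}$ as the dominant piece with everything else $O(A^4)$---matches the paper's proof essentially step for step, including the estimate for the multi-trace terms via Lemma~\ref{functA}.

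There is, however, a genuine gap in your parity argument. You deduce that $Q_n(x)$ contains only the powers $r^n, r^{n-2},\ldots$ from the fact that the skew-symmetric coefficients of $K_n^{(r)}(u,v)$ vanish on the diagonal, i.e.\ from the structure of $\Tr K_n^{(r)}$ alone. But the first equality in (\ref{Qn}) is an \emph{exact} statement about $Q_n$, not a statement modulo $O(A^4)$. The remainder $Q_n+\Tr K_n^{(r)}$ consists of traces of products such as $\Tr\bigl((I+R)K_{i_1}^{(r)}\cdots(I+R)K_{i_k}^{(r)}\bigr)$ with $k\geq 2$ and of $\Tr R K_n^{(r)}$; once you compose with $I+R$ (symmetric) and take products, the individual factors are no longer symmetric or skew-symmetric in $(u,v)$, so the diagonal-vanishing argument does not propagate. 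Your reasoning therefore only shows that the odd-parity powers of $r$ in $Q_n$ are $O(A^4)$, not that they vanish identically---and the exact vanishing is what is needed downstream in the proof of Theorem~\ref{theo:one-time expansion}, where the grading $W_1=W_{-1}=\cdots=0$ relies on $Q_n$ being a polynomial in $r$ of the form $\sum_i r^{n-2i}Q_{n,n-2i}$.

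The paper closes this gap by a global symmetry: directly from (\ref{k-Airy0}) one has $K_{-t}^{(-r)}(u,v)=K_t^{(r)}(v,u)$, hence $\det(I-K_t^{(r)})_x=\det(I-K_{-t}^{(-r)})_x$, i.e.\ $Q_r(t,x)=Q_{-r}(-t,x)$. Matching coefficients of $1/t^n$ then forces $Q_n(r)=(-1)^nQ_n(-r)$, which is exactly the parity claim. This one-line observation replaces the operator-symmetry bookkeeping you would otherwise need, and you should insert it in place of your ``Hence $Q_n(x)$ is a polynomial in $r$ containing only\ldots'' sentence.
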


\begin{proof} We shall always operate in $L^2(x,\iy)$, so that
occasionally the $x$ will be suppressed. Then, using the
asymptotics for the kernel $K_t^{(r)} $ as in Lemma
\ref{lemma:kernels}, one has the following:
 $$ I-K_t^{(r)} =
I-K^{(0)}-\frac{K^{(r)}_1}{t}-\frac{K^{(r)}_2}{t^2}-\ldots
=(I-K^{(0)})(I-\sum_{i\geq 1} \frac{L_i}{t^i})
  $$
  with (the resolvent operator $R$ of the Airy kernel is
defined in (\ref{resolvent}))
  \be
   L_i=(I-K^{(0)})^{-1}K_i^{(r)}=(I+R)K_i^{(r)}.
   \label{L}\ee
%
 Using $\log(1-z)=-z-\frac{z^2}{2}-\frac{z^3}{3}-\ldots$,
one finds
$$
Q(t,x)=\log\det\left(I-K_t^{(r)}\right)_x=\Tr\log(I-K_t^{(r)})=
\Tr\log(I-K^{(0)})+\sum^{\iy}_{i=1}\frac{Q_i}{t^i} ,$$
where
 \bea  \label{Q's}
  Q_1&=&-\Tr L_1,\quad Q_2=-\Tr
(L_2+\frac{L_1^2}{2}), \quad Q_3=-\Tr \Bigl(L_3+{L_1L_2}
+\frac{L_1^3}{3}\Bigr),
 \\~~Q_4&=&-\Tr\Bigl(L_4+\frac{1}{2}L_2^2+L_1^2L_2+L_1L_3
 +\frac{1}{4}L_1^4
\Bigr),\ldots.\no\eea
  More generally, the $Q_n$'s are weight-homogeneous polynomials of degree $n$
   in the $L_i$, with weight$(L_i)=i$, having the form
   below, which can further be expressed in terms of the
   $K_i^{(r)}$ and $R$, using
   expression (\ref{L}) for the $L_i$,
  \bea  \label{Qn7}
  Q_n&=&-\Tr
L_n+\Tr P_n(L_1,L_2,\ldots,L_{n-1})
 \\
&=&
 -\Tr K^{(r)}_n-\Tr(RK^{(r)}_n)+\Tr P_n(L_1,L_2,\ldots,L_{n-1})\no\\
 &=&
  -\Tr K^{(r)}_n+\Tr S_n(K^{(r)}_1,\ldots,K^{(r)}_{n-1},R)
  ,\no\eea
%
%
    with $P_k$ and $S_k$ polynomials of non-commutative variables
with no linear or independent terms, but with quadratic
terms and higher.
From Lemma \ref{lemma:kernels}, the kernels $K_i^{(r)}$
and hence the $L_i$ are polynomials in $r$ of degree $i$
having no constant terms; hence the $Q_n$'s, by their
weight-homogeneity, are polynomials of degree $n$ in
$r$, having no constant term and so (here one must
indicate the $r$-dependence)
 \be
 Q_r(t,x):=Q(t,x)=\sum_{n=0}^{\iy}
 \left(\frac{r}{t}\right)^n\sum_{i=0}^{n-1}
 \frac{Q_{n,n-i}(x)}{r^i}.\label{star}\ee
 The claim is that only the terms $Q_{n,n-2i}(x)$
 appear. Observe from
 (\ref{r-kernel}) and (\ref{k-Airy0}),
 that $
  K^{(-r)}_{-t}(u,v)=K^{(r)}_{t}(v,u)
   ,$
    and thus
    \bean
     Q_r(t,x)=\det (I-K_t^{(r)}(u,v))_x
           =\det (I-K_t^{(r)}(v,u))_x
           &=&\det (I-K_{-t}^{(-r)}(v,u))_x\no\\
           &=&Q_{-r}(-t,x)
           .\eean
So from (\ref{star}), one has
  $$
  Q_r(t,x)=\frac{1}{2}(Q_r(t,x)+Q_{-r}(-t,x)),$$
  implying that only the even terms appear in the
  sum $\sum_i$ in (\ref{star}), thus proving the
  statement (\ref{Qn}).

 Next we now proceed to estimate the
two traces above:

\noindent$\bullet$ At first
 $$
   \Tr S_k(K^{(r)}_1,\ldots,K^{(r)}_{k-1},R)=O(A^4)
 ,$$
 which we now illustrate on a typical example,
 like $\Tr( RK_1^{(r)})$
. Note $A(u)\geq 0$ for $u\geq 0$, and so by Lemma
\ref{functA},  \bean
 | \Tr RK^{(r)}_1|&=&|\Tr ((R-K^{(0)})K^{(r)}_1)|+|\Tr (K^{(0)}K^{(r)}_1)
|  \\
  &\leq& (|\!|R-K^{(0)}|\!|_{\iy}+|\!|K^{(0)}|\!|_{\iy})
   \int\!\!\!\!\int_x^{\iy}|K_1^{(r)}(u,v)|du dv\\
   &\leq& (O(A^4)+O(A^2))
    \int\!\!\!\!\int_x^{\iy}A(u)A(v)du dv\\
    &\leq&O(A^2)\Bigl(\int_x^{\iy} A(u)du\Bigr)^2 \\
    &\leq& O(A^4);
  \eean
  the last estimate follows from Lemma \ref{lemma:Airy}. More generally, the trace
  of a monomial of degree $\ell$ has order
  $O(A^{2\ell})$.

\noindent $\bullet$ Then we evaluate $\Tr K^{(r)}_n$; in
order to do so, it suffices to evaluate the kernels
$K_n^{(r)}$ of Lemma \ref{lemma:kernels} along the
diagonal, and to notice that a
skew-symmetric operator vanishes on the diagonal. Since the domain of the operator is unbounded, one needs to consider $K_n^{(r)} \chi_{_{(x,m)}}$; the trace is then obtained by integrating  on the diagonal and by taking the limit $m\rg \iy$, upon using the decay of the kernel at $\iy$. %
Therefore, on the diagonal, $K_n^{(r)}(u,v)$ is a
polynomial of degree $n$ in $r$, skipping every other
term,
\bean {K_n^{(r)}(u,u)}
&=&-\frac{r^n}{n!}\Big(A^2(u)\Big)^{(n-1)}\\
\\
&
&-\frac{r^{n-2}}{(n-3)!}\left(\frac{3n-1}{24}\Big(A^2(u)\Big)^{(n-1)}-\frac{n-1}{2}
\Big(A^{' 2}(u)\Big)^{(n-3)}\right)+ ...
 \eean
  That the
$Q_i\rightarrow 0$ and that $Q_0=\log\BP(\sup \mathcal {A}(t)\leq
x)$ has all their derivatives $\rightarrow 0$ for $x
\rightarrow \iy$ follows from the statement on $T_{n,n-2i}(A)$ (which are the coefficients of $r^{n-2i}$ appearing in $Q_n(x)$ as in the statement of Lemma \ref{lemma:F} )
and the asymptotics of the Airy function, from which
Lemma \ref{lemma:F}  follows. \end{proof}

%
%

%

%
%


 \begin{proof}[Proof of Theorem~\ref{theo:one-time expansion}]
 From Section~\ref{sect6}, we know that $Q(t,x)=\log   \BP(\sup\mathcal {A}^{(r)}(t)\leq x)$ satisfies the non-linear
  PDE (\ref{PDE0}); it is more convenient here to use version
  (\ref{one-time PDE}) of the equation.
  Also remember $Q(t,x)\rg 0$, when $x\rg \iy$. Then,
assigning weight$=1$ to both variables $t$ and $r$, one
readily checks that the PDE (\ref{one-time PDE}) can be
graded as follows:
  \bean
 \hspace{-.7cm} 0\!\!\!
   &=&   \left\{\frac{\pl^3Q}{\pl t\pl x^2}
  ,r^{2}
   \frac{\pl^3Q}{\pl
   x^3}\right\}~~~~~~~~~~~~~~~~~
   ~~~~~~~~~~~~~~~~~~~~~~~~~~~~~~~~~(\mbox{weight$=1$})
  \\
  &&+\left\{\frac{\pl^3Q}{\pl t\pl x^2},r\left(2\frac{\pl^2}{\pl t^2}(
     t\frac{\pl Q}{\pl x}
  )
    -\frac{\pl^3 (x Q)}{\pl t\pl x^2}
    +2\left\{
    \frac{\pl^2Q}{\pl t\pl x},\frac{\pl^2Q}{\pl x^2} \right\}
  \right)\right\}
  ~(\mbox{weight$=-1$})\\
  &&+  \mbox{other terms of weight$<-1$}.
  \eean
Since, by Lemma \ref{lemma:F}, the solution has the
following general form
  $$
  Q(t,x)=\sum_0^\iy \frac{Q_n(x)}{t^n}=
   \sum_0^\iy \left(\frac{r}{t}\right)^n\left( Q_{nn}(x)+
   \frac{1}{r^2}
Q_{n,n-2}(x)+ \frac{1}{r^4} Q_{n,n-4}(x)+\ldots\right)
,$$
  it follows that one can compute inductively all
  the $Q_{nn}(x)$ and then inductively all the $Q_{n,n-2}(x)$ and so
    the $Q_n$ will be as announced in Theorem
  \ref{theo:one-time expansion}.

Setting this solution in the PDE above, yields a series
of descending weights, namely $ 0=W_1+W_{-1}+W_{-3}+
..., $ which holds for $t\rg -\iy$, all $x\in {\mathbb R}$ and
all integers $r>0$; this implies $W_1=W_{-1}=W_{-3}=...
=0$; one then checks the explicit expressions
\bean
W_1&=&-\sum^{\iy}_{n=1}\frac{r^{n+2}}{t^{n+1}}\sum_{j=0}^{n-1}\left\{(n-j)Q''_{n-j,n-
j},Q'''_{jj}\right\}\\
\\
W_{-1}&=&\!-\sum^{\iy}_{n=1}\frac{r^{n}}{t^{n+1}}\sum_{j=0}^{n-1}\left(
\!\!\!\begin{array}{l}
\left\{(n-j)Q''_{n-j,n-j},\left(\begin{array}{l}
Q'''_{j,j-2}+2(j-1)^2Q'_{j-1,j-1}\\
+(j-1)x  Q''_{j-1,j-1}\\
-2\displaystyle{\sum_{\ell
+k=j-1}}\ell\{Q'_{\ell,\ell},Q''_{kk}\}
\end{array}\right)\right\}\\
\\
+\left\{(n-j)Q''_{n-j,n-j-2},Q'''_{jj}\right\}
\end{array}\!\!\!\right)
.\eean
Since this holds for all $t\searrow -\iy$ and $r>0$
integer, one must have for all $x$
 \be
\sum^{n-1}_{j=0}(n-j)\left\{Q''_{n-j,n-j},Q'''_{jj}\right\}=0,\qquad
n=1,2,... \label{leading} \ee with $Q_{00}=Q_0(x)$.


\noindent $\bullet$ For $n=1$, this is $
\{Q''_{11},Q'''_0\}=0, $ leading to
$
Q_{11}=c_0Q'_0+\al x+\beta.
$
Considering the asymptotics for $x\nearrow\iy$ and using
Lemmas \ref{lemma:Qprime} and \ref{lemma:F}, $\al$ and
$\beta$ must $=0$, leading to the equation
$$
0=Q_{11}-c_0Q'_0=(A^{'2}-AA'')(1-c_0)+{ O}(A^3)
=(1-c_0){ O}(A^2)+{ O}(A^3), $$ implying $c_0=1$
and so $Q_{11}=Q'_0$.


\noindent $\bullet$ For $n=2$, the equation, with the
previous data introduced, reads
\bean
0&=&\left\{2Q''_{22},Q'''_0\right\}+
\left\{Q''_{11},Q'''_{11}\right\}
=\left\{Q'''_0,-2Q''_{22}+Q^{iv}_0\right\} ,\eean which
upon solving leads to $
Q_{22}=\frac{1}{2}Q''_0+c_1Q'_0+\al'x+\beta'. $ For the
same reason as before $\al'=\beta' =0$. Then again using
Lemma \ref{lemma:Qprime} and Lemma \ref{lemma:F}, one
finds for $x\nearrow \iy$,
\bean
0=Q_{22}-\frac{1}{2}Q''_0-c_1Q'_0 
&=&-\frac{A^2}{2}+\frac{A^2}{2}-c_1(A^{' 2}-A'' A)+{  O}(A^3) 
\\ &=&c_1{  O}(A^2)+{  O}(A^3), \eean implying $c_1=0$
and thus $Q_{22}=\frac{1}{2}Q''_0$.

\noindent $\bullet$ By induction, assume $
Q_{ii}=\frac{1}{i!}Q_0^{(i)},\mbox{ for~} 0\leq i\leq
n-1. $ Then substituting this identity into equation
(\ref{leading}) and setting $
Q_{nn}=\frac{1}{n!}Q_0^{(n)}+R_n $ leads to pairwise
cancellations in equation (\ref{leading}) with only one
remaining contribution $ \{Q_0''', R''_n\}=0, $
with solution $ R_n=c_nQ'_0+\al'' x+\beta '' ,$ where
$\al '' =\beta '' =0$, and thus, by the asymptotics of
Lemmas \ref{lemma:Qprime} and \ref{lemma:F},
\bean
0=Q_{nn}-\frac{1}{n!}Q^{(n)}_0-c_nQ'_0
&=&-\frac{1}{n!}(A^2)^{(n-2)}+\frac{1}{n!}(A^2)^{(n-2)}-c_n(A^{'
2}-AA'')+{ O}(A^3)\\
&=&c_n{ O}(A^2)+{ O}(A^3) \eean leading to
$c_n=0$, completing the proof that
$Q_{nn}=\frac{1}{n!}Q^{(n)}_0$ for all $n=1,2,... $.
This proves the form of the leading term (coefficient of
$r^n$) in formulae (\ref{Qi}) for the $Q_n$'s. Since
from Lemma \ref{lemma:kernels}, from the form
(\ref{Qn7}) of the $Q_n$ and the fact that the
coefficients $K_n^{(r)}(u,v)$ in the expansion of
$K^{(r)}$ are divisible by $r$, the $Q_i$ themselves are
divisible by $r$. Since they skip every other degree in
$r$, this shows the formulae for $Q_1$ and $Q_2$; in
particular $Q_{20}=0$.

Setting this information $Q_{nn}=\frac{1}{n!}Q^{(n)}_0$
into the equation $W_{-1}=0$ and noticing that the
following term vanishes automatically, $ \sum_{\ell
+k=j-1}\ell\{Q'_{\ell\ell},Q''_{kk}\}=0, $ one finds for
$n\geq 3$,
\be \label{next} \ee
\vspace*{-1cm}
\bean
0&=&\sum^{n-1}_{j=2}\left\{\frac{Q_0^{(n-j+2)}}{(n-j-1)!},\left(Q'_{j,j-2}+\frac{1}{(
j-2)!}
\left(xQ_0^{(j-1)}+2(j-2)Q_0^{(j-2)}\right)\right)''\right\}\nonumber\\
\nonumber\\
& &-\sum^{n-1}_{j=0}\left\{\frac{Q_0^{(j+3)}}{j!},
(n-j)Q''_{n-j,n-j-2}\right\}
\no\\
 &=&\sum^{n-3}_{j=0}\left\{\frac{Q_0^{(j+3)}}{j!},
  \left(\begin{array}{l}
   Q'_{n-j-1,n-j-3}-(n-j)Q_{n-j,n-j-2}\\  \\
   +\frac{1}{(n-j-3)!}\left(xQ_0^{(n-j-2)}+2(n-j-3)Q_0^{(n-j-3)}\right)
   \end{array}
   \right)''\right\}
   \eean

\noindent $\bullet$ For $n=3$, by using the fact that
$Q_{20}=0$, the equation reads  $
\left\{Q'''_0,(xQ'_0-3Q_{31})''\right\}$ $=0 $ yielding $
Q_{31}=\left(\frac{x}{3}+c'_3\right)Q'_0+\al'''x+\beta'''
$ with $\al'''=\beta'''=0$. Thus, using Lemmas
\ref{lemma:F} and \ref{lemma:Qprime}, and the identity
(\ref{RemF}),
\bean
0&=&Q_{31}-\left(\frac{x}{3}+c'_3\right)Q'_0
\\
&=&-\frac{1}{3}\left((A^2)'+3\int^{\iy}_xA^{'2}(u)du\right)-
\left(\frac{x}{3}+c'_3\right)(A^{' 2}-AA'')+{ O}(A^3)
 \\
 &=& -\left( \int_x^{\iy}A^{'2}(u)du
  +\frac{1}{3}\left(
  (A^2)'+x(A^{'2}-AA'')\right)\right)-c'_3(A^{'2}-AA'')+{ O}(A^3)
\\
&=&-c'_3(A^{'2}-AA'')+{ O}(A^3)
 =c'_3{\bf O}(A^2)+{ O}(A^3)
  ,~~~~\mbox{using
  (\ref{RemF})}
  \eean
   yielding $c'_3=0$, and thus $Q_{31}=\frac{x}{3}Q'_0$.

\medbreak

\noindent $\bullet$ For $n=4$, using the formula for
$Q_{31}$, the equation reads
$$
0=\left\{Q'''_0,\left(-4Q_{42}+\frac{1}{3}\left(
xQ_0'\right)'+xQ''_0+2Q'_0\right)'' \right\},
$$
with solution
$
Q_{42}=\frac{1}{3}xQ''_0+\left(
\frac{7}{12}+c'_4\right)Q'_0+\al^{{\it iv}}x+\beta^{iv}
$
and thus $\al^{iv}=\beta^{iv}=0$, and by the same Lemmas
\ref{lemma:F} and \ref{lemma:Qprime},
\bean
0&=&Q_{42}-\frac{1}{3}xQ''_0-\left( \frac{7}{12}+c'_4\right)Q'_0\\
\\
&=&\frac{3}{2}A^{'2}-\frac{11}{24}(A^2)''
+\frac{1}{3}AA'' -\left(
\frac{7}{12}+c'_4\right)(A^{'2}-AA'')+
{ O}(A^3)\\
\\
&=&\frac{3}{2}A^{'2}-\frac{11}{12}(A^{'2}+AA'')+\frac{1}{3}AA''
-\left(
\frac{7}{12}+c'_4\right)(A^{'2}-AA'')+ { O}(A^3)\\
\\
&=&-c'_4(A^{'2}-AA'')+{ O}(A^3)
=c'_4{ O}(A^2)+{ O}(A^3), \eean implying
$c'_4=0$.

\noindent $\bullet$ Using induction, assume
$$
Q_{i,i-2}=\frac{x}{3(i-3)!}Q_0^{(i-2)}+\frac{7}{12(i-4)!}Q_0^{(i-3)}
$$
holds for $i=3,..., n-1$. Then setting \be
Q_{n,n-2}=\frac{x}{3(n-3)!}Q_0^{(n-2)}+\frac{7}{12(n-4)!}Q_0^{(n-3)}+R_n
\label{ansatz}\ee into equation (\ref{next}) gives the
simple equation for $R_n$, namely $ \{Q_0''',R''_n\}=0,
$ and so $R_n=c'_nQ'_0. $ Then, rewriting (\ref{ansatz})
and using the asymptotics for $Q_{n,n-2}$ (Lemma
\ref{lemma:F}), and for the derivatives $Q_0^{(i)}$
(Lemma \ref{lemma:Qprime}), and using the ODE for the
Airy function $xA=A''$, we get
\bean
0&=&Q_{n,n-2}-\frac{1}{12(n-3)!}(4xQ''_0+(3n-5)Q'_0)^{(n-4)}-c'_nQ'_0\\
\\
&=&\frac{1}{12(n-3)!}\left(\begin{array}{l}
6(n-1)A^{'2}-(3n-1)(A^{'2}+AA'')\\
+4AA''-(3n-5)(A^{'2}-AA'')
\end{array}
\right)
^{(n-4)}
 \\
 & &
-c'_n(A^{' 2}-AA'')+{ O}(A^3)
\\
&=&-c'_n(A^{' 2}-AA'')+{ O}(A^3)
=c'_n { O}(A^2)+{ O}(A^3) ,\eean implying
$c'_n=0$. Thus
%
   the $Q_n$'s are as announced in Theorem
 \ref{theo:one-time expansion}, namely
 $$
 Q_n=
   \frac{r^n}{n!}
   Q_0^{(n)}+\frac{r^{n-2}x}{3}\frac{Q_0^{(n-2)}}{(n-3)!}
    +\frac{7r^{n-2}}{12} \frac{Q_0^{(n-3)}}{(n-4)!}
   +{r^{n-4}}G(x)+\left(\begin{array}{l}
   \mbox{lower degree\!\!\!}\\
   \mbox{terms in $r$}
   \end{array}\right).
  $$
In the same fashion we compute $Q_{51}$ and $Q_{62}$;
for example, setting
$$ Q_5=
 \frac{r^5}{5!}
   Q_0^{(v)}+\frac{r^{3}x}{3}\frac{Q_0^{'''}}{2!}
    +\frac{7r^{3}}{12} \frac{Q_0^{''}}{1!}
   +rQ_{51}
   $$
   into the equation (\ref{one-time PDE}), one finds the following
   differential equation for $Q_{51}$, namely
   $$ \left\{Q_0''', 5Q_{51}''-8xQ_0''+4Q_0^{\prime\prime 2}  \right\}
+2Q_0^{\prime\prime\prime 2}(Q_0^{\prime\prime } +x) =0,
 $$
%
%
%
%
 which upon solving leads to
 $$Q_{51}=
\frac{1}{5}\left(\begin{array}{l}(x^2+c_5)Q_0'+4xQ_0
+Q_0^{\prime 2}
   \\+10\int_x^{\iy}Q_0-
   6\int_x^{\iy}dy\int_y^{\iy}du Q_0^{\prime\prime 2}
    \end{array}\right)
  =:\frac{1}{5} (\mathcal{F}_5+c_5Q'_0) ,$$
with a constant $c_5$, which has been shown by Aminul
Huq (private communication, 2008) to be $0$. Similarly one finds a differential
equation for $Q_{62}$ and upon solving one finds, for some integration constant $c_6$,
$$
Q_{62}=
 {\frac {1}{5}}  \left(
 \mathcal{F}_5'+\frac{5}{18}(x^2Q_0''+13(x+c_6)Q_0' ) \right)
 .$$

 \medbreak


 Assembling all the pieces, one notices that
two Taylor series in $Q_0$ and $Q_0'$ make their
appearance in the $1/t$-expansion of $Q(t,x)$, leading
to shifts in the argument of $Q_0(x)$ up to order $5$:
\be \label{Q}\ee
\vspace*{-1.1cm}
\bean
 Q(t,x)&=&\sum_0^{\iy}\frac{Q_i(x)}{t^i}\\
 &=&
  \sum_0^{\iy} \left(\frac{r}{t}\right)^n\frac{Q_0^{(n)}(x)}{n!}
 +\left(\frac{xr}{3t^3}+\frac{7r^2}{12t^4}\right)
  \sum_0^{\iy}
  \left(\frac{r}{t}\right)^n\frac{Q_0^{(1+n)}(x)}{n!}
  \no\\&& + \frac{r}{5t^5}\mathcal{F}_5+O(\frac{1}{t^6})
    \no\\
   &=&
   Q_0\Bigl(x+\frac{r}{t}\Bigr)+
    \Bigl(\frac{xr}{3t^3}+\frac{7r^2}{12t^4}\Bigr)
    Q_0'\Bigl(x+\frac{r}{t}\Bigr)+
    \frac{r}{5t^5}\mathcal{F}_5+O(\frac{1}{t^6})
    \no\\
   &=&
   Q_0\Bigl(x+\frac{r}{t}+\frac{xr}{3t^3}+\frac{7r^2}{12t^4}\Bigr)
   + \frac{r}{5t^5}\mathcal{F}_5+c_5Q_0')+O(\frac{1}{t^6})
  .\no\eean
 Exponentiating (\ref{Q}), remembering
that $e^{Q_0(y)}=
 \BP\left(\sup\mathcal {A}(t)\leq y\right)$,
taking a derivative $\frac{d}{dy}\BP\left(\sup\mathcal{A}(t)\leq y\right)$ and setting
$P_0(x):=\BP\left(\sup\mathcal {A}(t)\leq x\right)$ yields
\bea
 {\frac{d}{dx}
 \BP(\sup\mathcal {A}^{(r)}(t)\leq x)}
 &=&\Bigl(1+\frac{r}{3t^3}\Bigr)
 \frac{d}{dy}\BP\left(\sup\mathcal {A}(t)\leq
 y\right)\Bigr|_{y=
 (x+\frac{r}{t})(1+\frac{r}{3t^3})+
    \frac{r^2}{4t^4}}+O(\frac{1}{t^5})
  \no\\
  &=&
   P_0'  +\frac{r}{t}P_0'' +
    \frac{r^2}{2t^2}P_0'''
    +\frac{r}{6t^3}(r^2P_0^{(iv)}+(2xP_0')')
   \no \\
    &&
    +\frac{r^2}{24 t^4}(r^2 P_0^{(v)}+14P_0''+8(xP_0'')')
    +O(\frac{1}{t^5}).
   \label{7.7} \eea
For the moments with regard to the density $\frac{d}{dx}
 \BP(\sup\mathcal {A}^{(r)}(t)\leq x)$,
$$
\mu_{\ell}^{(r)}(t)=\int_{-\iy}^\iy   x^\ell
\frac{d}{dx}
 \BP(\sup\mathcal {A}^{(r)}(t)\leq x) dx ,
 $$
  one reads off from (\ref{7.7}) the following expansion
  for $\mu_{\ell}^{(r)}(t)$ in terms of the $t$-independent moments
  $$
\mu_{\ell}  =\int_{-\iy}^\iy   x^\ell \frac{d}{dx}
 \BP(\sup\mathcal {A} (t)\leq x) dx =
 \int_{-\iy}^\iy   x^\ell P_0^{\prime}(x) dx
 ,
 $$
  namely,
\bean \mu_{\ell}^{(r)}(t)&=&
 \mu_{\ell}  -\ell\frac{r}{t}\mu_{\ell-1} +
    \frac{r^2}{2t^2}\ell(\ell-1)\mu_{\ell-2}
    +\frac{r}{6t^3}(-r^2\ell(\ell-1)(\ell-2)\mu_{\ell-3}-2\ell\mu_\ell)
 \\&&   +\frac{r^2}{24 t^4}(r^2 \ell(\ell-1)(\ell-2)(\ell-3)
 \mu_{\ell-4}+\ell(8\ell-14)\mu_{\ell-1})
 +O(\frac{1}{t^5}).\eean
In particular, the mean and second moment behave as
$$
\mu_1^{(r)}(t)={ \mu_1}-{\frac {r}{t}}-  {\frac {r{
\mu_1}}{3{t}^{3}}}-  {\frac
{{r}^{2}}{4{t}^{4}}}+O\Bigl(\frac{1}{t^5}\Bigr)
$$
and
$$
\mu_2^{(r)}(t)=
 {  \mu_2}-  {\frac {2r{  \mu_1}}{t}}+{\frac
 {{r}^{2}}{{t}^{2}}}-
 {
\frac {2r{  \mu_2}}{3{t}^{3}}}+  {\frac {{r}^{2}{
\mu_1}}{6{t}^{4}}}+O\Bigl(\frac{1}{t^5}\Bigr).
$$
Hence the variance of the right edge of the process
      behaves as
%
     \bean \mbox{var}(\sup \mathcal{A}^{(r)}(t))=
     (\mu_2^{(r)}-\mu_1^{(r)2})(t)  &=&
    ( \mu_2^{ }-\mu_1^{
    2})\left(1-\frac{2r}{3t^3}\right)+O\Bigl(\frac{1}{t^5}\Bigr)\\&=&
 \mbox{var}(\sup\mathcal {A}^{(0)}(t))\left(1-\frac{2r}{3t^3}\right)
  +O\Bigl(\frac{1}{t^5}\Bigr),\eean
ending the proof of Theorem~\ref{theo:one-time expansion}.
\end{proof}

  \section{The $r$-Airy process, an interpolation between the Airy and Pearcey processes}\label{inter}


   Consider $n$ non-intersecting Brownian motions on ${\mathbb R}$, with
 $0<p<1$ and $b<a$:
%
%
\bean   
\BP^{(a,b)}_{n} \left(\begin{tabular}{c|c}
& all $x_j(0) =0$\\
all $x_j(t)\in E$ for $1\leq j\leq n$ &
 $pn$ paths end up at $a$ at $t=1$\\
  &
$(1-p)n$ paths end up at $b$ at $t=1$
\end{tabular}\right)
\eean
It is intuitive that, when $n\rg \iy$, the mean density of Brownian particles
has its support on one interval for $t\sim  0$ and on
two intervals for $t\sim  1$, so that a bifurcation
appears for some intermediate time $t_0$, where one
interval splits into two intervals. At this point the
boundary of the support of the mean density in $(x,t)$-space has a cusp; see Figure 0.3.
The Pearcey process describes this cloud of particles near the point of bifurcation, with time and space stretched in such a way that the outer particles appear infinitely far and such that the time horizon $t=1$ is at infinity. In \cite{AvM-asymmPearcey} it is shown that the same Pearcey process appears in the neighborhood of this cusp, independently of the target points $a$ and $b$, and the number $np$ of paths forced to $a$, showing ``universality" of the Pearcey process. It
is convenient to introduce the parametrization of $p$,
 \be
  p=\frac{1}{1+q^3} \mbox{~with~}  0< q< \iy . 
  \label{inter1}\ee
Setting for simplicity $b=0$, one has the following:

  \begin{proposition} \cite{AvM-asymmPearcey}
  For $n\rg \iy$, the cloud of Brownian particles lies
within a region, having a cusp at location
$(x_0\sqrt{n},t_0)$, with
\be
  x_0 =\frac{(2q-1)a}{q+1}t_0
,\qquad\qquad
t_0=\left({1+2a^2\frac{q^2-q+1}{(q+1)^2}}\right)^{-1}
. \label{inter2}\ee
  Moreover, the following Brownian motion probability tends to the
probability for the Pearcey process $\mathcal{P}(t)$:
 \bea
\lefteqn{\lim_{n\rg\iy}\BP_{BR}^{(0,a\sqrt{n})}
 \left(\mbox{all~}
x_j \left(t_0+ (c_0\mu)^2\frac{2\tau}
{n^{1/2}}\right)\in  {x_0}  n^{1/2}+c_0 A
\tau+c_0\mu\frac{E^c}{n^{1/4}}\right)
 }\no
 \\& &\hspace*{9cm}
  =\BP^\mathcal{P}\left(\mathcal{P}(\tau ) \cap E =
\emptyset\right), \label{Pearcey limit}\eea
 with constants expressed in terms of (\ref{inter1}) and (\ref{inter2}),
 $$
   \mu:=\left(\frac{q^2-q+1
 }{q}\right)^{1/4}>0,
\qquad 
c_0:=\sqrt{\frac{ t_0(1-t_0)}{2}}>0,
 $$ $$
 A := {q^{1/2}(1-\frac{x_0}{a})-q^{-1/2}\frac{x_0}{a}} .$$
\end{proposition}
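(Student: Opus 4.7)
The plan is to adapt the saddle point strategy used in the proof of Theorem \ref{Theo1} (Section \ref{sect2}) to the two-target setting, specialising at the cusp of the support. By the Karlin-McGregor formula combined with the Eynard-Mehta-type argument of \cite{Johansson1, AptBleKui}, the probability on the left of (\ref{Pearcey limit}) equals the Fredholm determinant of a kernel with double contour integral representation
\begin{equation*}
H_n(x,y) = -\frac{1}{4\pi^2(1-t)}\int_{\mathcal{D}} dz \int_{\Gamma_L}\! dw\,
e^{\frac{1}{1-t}(tw^2-2wy-tz^2+2xz)}\!\left(\tfrac{w}{z}\right)^{(1-p)n}\!\!\left(\tfrac{w-a\sqrt{n}}{z-a\sqrt{n}}\right)^{\!pn}\!\frac{1}{w-z},
\end{equation*}
where $\mathcal{D}$ encloses both $0$ and $a\sqrt{n}$ and lies to the left of the vertical line $\Gamma_L$. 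As in Section \ref{sect2}, the object of interest is a double integral of a function of the form $e^{n\Phi(z)-n\Phi(w)}\cdot (\text{polynomial factors})/(w-z)$, where
$$\Phi(z)=\tfrac{tz^2}{1-t}-\tfrac{2x z}{\sqrt n(1-t)}-(1-p)\log z-p\log(z-a).$$
At the cusp the three equations $\Phi'(z_0)=\Phi''(z_0)=\Phi'''(z_0)=0$ must hold simultaneously; solving this algebraic system yields precisely the values $(x_0,t_0)$ of (\ref{inter2}) and a unique real saddle $z_0$, and one checks that $\Phi^{(4)}(z_0)\neq 0$.

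Next I would perform the local change of variables $z=z_0+z'/n^{1/4}$, $w=z_0+w'/n^{1/4}$, and the accompanying rescaling of the space and time variables as prescribed in (\ref{Pearcey limit}), i.e.\ $t=t_0+(c_0\mu)^2\cdot 2\tau/n^{1/2}$, $x=x_0\sqrt n+c_0 A\tau+c_0\mu u/n^{1/4}$, $y=x_0\sqrt n+c_0 A\tau+c_0\mu v/n^{1/4}$. Because $\Phi'=\Phi''=\Phi'''=0$ at $z_0$, the first three powers in the Taylor expansion of $n\Phi(z_0+z'/n^{1/4})$ vanish, leaving
$$n\Phi(z)=n\Phi(z_0)+\tfrac{1}{24}\Phi^{(4)}(z_0)z^{\prime 4}+O(n^{-1/4}),$$
and the perturbation in $t$ contributes the quadratic term $\tau z^{\prime 2}$, while the $x$-perturbation contributes the linear term $-uz'$. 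The constants $\mu$ and $c_0$ in (\ref{Pearcey limit}) are chosen precisely so that after conjugating by $e^{n^{1/4}(v-u)\gamma}$ (as in the Airy derivation), the rescaled integrand collapses to the Pearcey density $e^{-w^{\prime 4}/4+\tau w^{\prime 2}/2-vw'}\cdot e^{z^{\prime 4}/4-\tau z^{\prime 2}/2+uz'}/(w'-z')$. Writing $1/(w-z)$ as $\int_0^\infty e^{-y(w-z)}dy$ then expresses the limit kernel as
$$K^{\mathcal{P}}(u,v)=\int_0^\infty \mathcal{I}^{\mathcal{P}}(u+y)\mathcal{J}^{\mathcal{P}}(v+y)dy,$$
which is the Pearcey kernel of \cite{TW-Pearcey, AvM-Pearcey}.

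The main obstacle is the rigorous saddle point analysis: one must justify the deformation of the contour $\mathcal{D}$ through $z_0$ and of $\Gamma_L$ through $z_0$ along steepest descent/ascent directions, while keeping them non-intersecting (since $\mathcal{D}$ must still enclose $0$ and $a\sqrt n$ and stay to the left of $\Gamma_L$). At a quartic saddle point there are four steepest descent rays meeting at equal angles, and the contours must wind through the correct rays; verifying that this deformation is admissible and that the tails contribute negligibly is the technical heart of the argument. With that in hand, one obtains uniform bounds $|H_n - K^{\mathcal{P}}|\leq Ce^{-c(u+v)}n^{-1/4}$ on $(u_0,\infty)^2$, and a trace-norm estimate analogous to Corollary \ref{cor:estimate2} upgrades the kernel convergence to convergence of Fredholm determinants, completing the proof. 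The universality statement (independence of $a$ and $p$ after rescaling) is automatic from the universal local form $e^{-w^{\prime 4}/4}$ of the exponential at a generic quartic saddle.
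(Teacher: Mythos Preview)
The paper does not prove this proposition: it is quoted as a result from the companion preprint \cite{AvM-asymmPearcey} (note the citation immediately following the proposition header), and is invoked in Section~\ref{inter} only as input to the short argument for Theorem~\ref{Interpolation Theorem}. There is therefore no proof in the present paper against which to compare your attempt.

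That said, your outline is the correct strategy and is essentially how the result is obtained in \cite{AvM-asymmPearcey} and in the earlier Pearcey literature \cite{TW-Pearcey,BleKui1}: one writes the probability as a Fredholm determinant with the double-contour kernel you display, identifies the cusp as the locus where the phase $\Phi$ has a triple critical point (giving the algebraic system for $(x_0,t_0,z_0)$ and hence (\ref{inter2})), rescales space by $n^{-1/4}$ and time by $n^{-1/2}$, and carries out steepest descent at the quartic saddle. One technical caveat: the identity $1/(w-z)=\int_0^\infty e^{-y(w-z)}\,dy$ requires $\Re(w-z)>0$ uniformly along the deformed contours, which is delicate at a Pearcey point since both the $z$-contour and the $w$-contour must pass through the \emph{same} saddle $z_0$; in the actual analysis the limiting Pearcey kernel is left as a double contour integral with the factor $1/(w'-z')$ intact (cf.\ \cite{TW-Pearcey}) rather than being split into a H\"ankel-type product as in the Airy case. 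Apart from this point your sketch is sound.
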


The $r$-Airy process is an interpolation between the Pearcey process and the Airy process, which can easily be described by looking at Figure 0.3: 

\begin{theorem}\label{Interpolation Theorem} When $p\rg 0$ and for $n$ very large, such that $pn$ equals a fixed integer $r>0$, the tip $(x_0\sqrt{n},t_0)$ of the cusp (as given by (8.2)) moves towards the right hand boundary of the picture, and, in particular, to the tangency point of the line through $(a\sqrt{n},1)$ tangent to the curve $y =\sqrt{2nt(1-t)}$:
$$
 (x_0\sqrt{n},t_0)\rg\left(\frac{2a}{1+2a^2}\sqrt{n},\frac{1}{1+2a^2}\right)\in \mbox{curve}\{y=\sqrt{2nt(1-t)}\}
. $$
Also the Pearcey process near the cusp tends to the r-Airy process in the neighborhood of the point of tangency above.
\end{theorem}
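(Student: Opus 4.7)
The statement splits into two parts: the limit of the cusp location, which is an elementary calculation, and the process-level convergence, which is best handled by reducing to Theorem~\ref{Theo1} directly.

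First, I would verify the convergence of the cusp. With $p = 1/(1+q^3)$, the limit $p \to 0$ corresponds to $q \to \infty$, and (\ref{inter2}) gives $(2q-1)/(q+1) \to 2$ and $(q^2-q+1)/(q+1)^2 \to 1$, so that
\[
 t_0 \longrightarrow \frac{1}{1+2a^2}, \qquad x_0 \longrightarrow \frac{2a}{1+2a^2}.
\]
Substituting $t = (1+2a^2)^{-1}$ into $\sqrt{2nt(1-t)}$ yields $2a\sqrt{n}/(1+2a^2) = x_0\sqrt{n}$, showing the limit point lies on $\mathcal{C}$. Setting $\rho_0 = a\sqrt{2}$ in (\ref{tangency}) recovers the same point, which is exactly the tangency point of the line through $(a\sqrt{n},1)$ with $\mathcal{C}$.

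For the process convergence, the cleanest route is to exploit that \emph{both} limit processes spring from the same finite-$n$ object: the constrained Brownian motion (\ref{Brownian target}), whose density is controlled by the Gaussian matrix model with external source (\ref{diagmatrix}) carrying $pn$ copies of $\alpha$ and $(1-p)n$ copies of $0$. When $pn$ is held equal to a fixed integer $r$, this is precisely the setting of Theorem~\ref{Theo1}, and the scaling limit at the tangency point $(y_0,t_0)$ is, by that theorem, the $r$-Airy process. Since the first step identifies the Pearcey cusp location $(x_0\sqrt{n},t_0)$ with the tangency point in this regime, uniqueness of the scaling limit forces the Pearcey process near its cusp, evaluated along the degeneration $p = r/n$, to coincide with the $r$-Airy process at that point.

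The main obstacle would be if one insisted on deriving the $r$-Airy limit directly from the Pearcey integral kernel: the two scalings are genuinely incompatible (Pearcey space $\sim n^{-1/4}$, time $\sim n^{-1/2}$ versus Airy space $\sim n^{-1/6}$, time $\sim n^{-1/3}$). At the kernel level the transition can nevertheless be tracked via (\ref{C032}): the exponent $F(z)$ of (\ref{F}) produces a double saddle responsible for Pearcey behavior only when the proportion $p$ stays strictly positive; once $p = r/n \to 0$, the factor $\bigl((w-a)/(z-a)\bigr)^r$ becomes a rational perturbation, the coalescent saddles separate, and one recovers the Airy cubic saddle at $z=1$ together with the rational factors $(\tilde z - \alpha e^{\tau/n^{1/3}})^{\pm r}$ that produce the modified Airy functions $A_r^\pm$. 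A direct proof along these lines would parallel the estimates of Section~\ref{sect2}, the hard part being uniform saddle-point asymptotics along the joint degeneration $n \to \infty$, $p \to 0$, $pn = r$; the reduction to Theorem~\ref{Theo1} sidesteps this difficulty entirely.
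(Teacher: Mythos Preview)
Your computation of the cusp limit is correct and matches the paper's proof essentially verbatim: let $q\to\infty$ in (\ref{inter2}), read off $t_0\to(1+2a^2)^{-1}$ and $x_0\to 2at_0$, check that $(x_0\sqrt{n},t_0)\in\mathcal{C}$, and match with (\ref{tangency}) upon setting $\rho_0=a\sqrt{2}$.

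For the process convergence, the paper explicitly defers the proof (``will be done elsewhere''), so there is no argument to compare against. Your proposed ``reduction to Theorem~\ref{Theo1}'' is not a valid substitute, and the gap is precisely the one you identify in your final paragraph but then claim to sidestep. Theorem~\ref{Theo1} computes the limit of the finite-$n$ model under Airy scaling (space $n^{-1/6}$, time $n^{-1/3}$) at the tangency point when $pn=r$ is held fixed; it says nothing about what the \emph{Pearcey process}---already an $n=\infty$ object, obtained under the incompatible scaling (space $n^{-1/4}$, time $n^{-1/2}$) at fixed $p>0$---converges to as a further parameter degenerates. ``Uniqueness of the scaling limit'' does not apply here: the two processes are limits of the same finite model in \emph{different} windows, not competing candidates for the same limit. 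The actual content of the second assertion is either a statement about the Pearcey kernel under a secondary limit, or a diagonal double limit $n\to\infty$, $p=r/n\to 0$ with uniform control; proving it requires exactly the uniform saddle-point analysis you outline at the end. That outline is a reasonable plan; the shortcut through Theorem~\ref{Theo1} is not.
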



  %


\begin{proof}
Indeed, letting $p\rg 0$, or what is the same from (\ref{inter1}), letting $q\rg \iy$, one sees from formula (\ref{inter2}) that the cusp is located at the point
 $$x_0\sqrt{n}= 2at_0\sqrt{n} \mbox{~~and~~} t_0=\frac{1}{1+2a^2}.$$
  This implies that the point $(y,t)=(x_0\sqrt{n},t_0)$
  belongs to the curve $y=\sqrt{2nt(1-t)}$ and that
   $$a\sqrt{n}=\sqrt{\frac{1-t_0}{2t_0}}\sqrt{n}=\rho_0\sqrt{\frac{n}{2}},$$
   establishing the first part of Theorem \ref{Interpolation Theorem}. That the Pearcey process tends to the $r$-Airy process will be done elsewhere.  \end{proof}


\section{Appendix}










The purpose of this appendix is to show that the first
few $Q_i$ of Theorem \ref{theo:one-time expansion} can
be obtained, {\em with tears}, by functional analytical
methods, in the style of Widom \cite{Widom}. The proof
requires many intricate identities involving the kernels
$K_i^{(r)}$, some of which can be found in Tracy-Widom
\cite{TW-Airy}. This section should convince the reader
of the usefulness of the PDE's in computing the
asymptotics for $t\rg -\iy$.

Remember the $L_i =(I+R_x)K_i^{(r)}$ from (\ref{L}),
where we now indicate the explicit dependency of the
resolvent $R_x=K^{(0)}(I-K^{(0)})^{-1}$ on $x$, since
all operators act on $L^2(x,\iy)$. Then $Q(t,x)$ has an
expansion, with $Q_i$'s given in (\ref{Q's}),
$$
Q=\Tr\log(I-K_t^{(r)})=\sum^{\iy}_{i=0}\frac{Q_i}{t^i}=
 \Tr\log(I-K^{(0)})+\sum^{\iy}_{i=1}\frac{Q_i}{t^i}
.$$
Throughout this section, we shall be using the
inner-product $$\la f,g\ra:= \int_{\mathbb R}
\raisebox{1mm}{$\chi$}{}_{(x,\iy)}(u)f(u)g(u)du .$$


\begin{proposition}\label{Theo:8.1}

\bean Q_1=rQ'_0 ,~~ Q_2=\frac{r^2}{2!}Q''_0
 ,~~
 Q_3=\frac{r^3}{3!}Q'''_0+\frac{r}{3 } xQ'_0.
\eean

\end{proposition}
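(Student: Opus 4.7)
My strategy is to expand $\log\det(I - K_t^{(r)})_x$ according to the trace expansion (\ref{Q's}), namely $Q_n = -\Tr L_n + \Tr P_n(L_1,\ldots,L_{n-1})$ with $L_i = (I+R_x)K_i^{(r)}$, and to evaluate the traces in closed form by exploiting the algebraic structure of the Airy kernel. The starting observation is that, after applying the Christoffel--Darboux identity $(u-v)K^{(0)}(u,v) = A(u)A'(v)-A'(u)A(v)$ and the Airy ODE $A''(u) = uA(u)$, each of the kernels $K_i^{(r)}(u,v)$ of Lemma~\ref{lemma:kernels} reduces to a finite sum of rank-one tensors built out of $A$ and $A'$ with polynomial coefficients in $u,v$. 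Consequently, introducing the two Widom-type functions
\begin{equation*}
P(u;x) := (I+R_x)A(u),\qquad \Phi(u;x) := (I+R_x)A'(u),
\end{equation*}
and writing $q(x):=P(x;x)$, each operator $L_i$ has a kernel that is an explicit finite combination of $P\otimes A$, $P\otimes A'$, $\Phi\otimes A$, $\Phi\otimes A'$ with polynomial coefficients, so that every trace reduces to scalar inner products such as $\langle A,P\rangle_x$, $\langle A',P\rangle_x$, $\langle A',\Phi\rangle_x$ and $\langle uA,P\rangle_x$.

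For $n=1$, the kernel $K_1^{(r)} = -rA\otimes A$ is already rank one, so $Q_1 = r\langle A,P\rangle_x$; the identity $\langle A,P\rangle_x = Q_0'(x)$ is established by differentiating in $x$, using the standard resolvent-derivative formula $\partial_x P(u;x) = -q(x)R(u,x)$ together with the Tracy-Widom facts $R(x,x) = -Q_0'(x)$ and $Q_0''(x) = -q(x)^2$, and then matching boundary behavior at $x=+\infty$. For $n=2$, the Christoffel--Darboux identity collapses $K_2^{(r)}$ into two rank-one tensors, giving after a short calculation $Q_2 = r^2\langle A',P\rangle_x - \tfrac12 r^2\langle A,P\rangle_x^2$. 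The target formula $Q_2 = \tfrac{r^2}{2}Q_0''(x)$ is then equivalent to the scalar identity
\begin{equation*}
\langle A,P\rangle_x^2 - 2\langle A',P\rangle_x = q(x)^2,
\end{equation*}
which is again verified by the derivative-and-vanishing-at-infinity device, now invoking the auxiliary relation $\Phi(x;x) = q'(x) + q(x)Q_0'(x)$ that itself follows from integrating by parts with the functional identity $(\partial_u + \partial_v)K^{(0)}(u,v) = -A(u)A(v)$.

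The case $n=3$ proceeds by exactly the same scheme but is substantially heavier. After $A''= uA$ is applied, $K_3^{(r)}$ carries contributions of degrees $r^3$, $r^2$ and $r$ together with the polynomial factor $(u+v)$ and the antisymmetric factor $(v-u)$. Assembling $-\Tr L_3 - \Tr(L_1L_2) - \tfrac13\Tr(L_1^3)$ produces new scalar inner products, notably $\langle uA,P\rangle_x$ and $\langle A',\Phi\rangle_x$, each of which must be reduced to a polynomial in $q,q',Q_0'$ and $x$ by the same derivative-and-boundary procedure; the Painlev\'e~II equation $q''(x) = xq(x) + 2q(x)^3$ satisfied by the Hastings-McLeod function, together with the Airy equation, provides the algebraic engine that closes the system. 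The main obstacle is precisely this bookkeeping: finding the particular linear combination of Tracy-Widom identities that isolates the subleading coefficient $\tfrac{1}{3}x$ in front of $Q_0'(x)$, and checking meticulously that no boundary contribution at $x=+\infty$ has been dropped when integrating back. This labor makes plain the comparative economy of the PDE-based derivation of Section~\ref{sect7}, which bypasses all of these identities in a single stroke.
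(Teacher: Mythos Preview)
Your approach is essentially identical to the paper's own Appendix proof: both expand $Q_n$ via (\ref{Q's}), exploit the finite-rank structure of the $K_i^{(r)}$ to reduce every trace to scalar inner products of $(I+R_x)A$ and $(I+R_x)A'$ against $A$, $A'$, and then invoke Tracy--Widom identities to convert these inner products into $Q_0',Q_0'',Q_0'''$. The paper packages the needed identities as Lemma~\ref{TW} (citing \cite{TW-Airy}) rather than re-deriving them by your differentiate-and-match-at-infinity device, and it carries the $Q_3$ bookkeeping through explicitly in Lemma~\ref{Lemma:8.5} using (\ref{TW4}), (\ref{TW4'}) and the auxiliary identity (\ref{Identity3}), whereas you only sketch that case; but the route is the same.

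Two small points. First, in the paper's convention $I+R_x=(I-K^{(0)})^{-1}$, so $Q_0'(x)=R_x(x,x)=+\langle (I+R_x)A,A\rangle$, not $-Q_0'(x)$; your sign slip there would propagate if left uncorrected. Second, the paper does not explicitly invoke Painlev\'e~II in the $Q_3$ calculation; the role you assign to $q''=xq+2q^3$ is played instead by the algebraic identities (\ref{TW4'}) and (\ref{Identity3}), which close the system without ever writing the ODE for $q$. Either packaging works, but be aware that the paper's version avoids that extra step.
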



\begin{lemma}\label{lemma:8.1}
$$
\Tr L_1^n=(-r)^n\la (I+R_x)A ,A \ra^n.
$$
\end{lemma}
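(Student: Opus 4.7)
The plan is to exploit the rank-one structure of the operator $K_1^{(r)}$. From the explicit list of coefficient kernels recorded in the expansion of Lemma~\ref{lemma:kernels} (the first formula of \eqref{Ki's}), one reads off
\[
 K_1^{(r)}(u,v) = -r\, A(u)\, A(v),
\]
which is the kernel of the rank-one operator $-r\,A \otimes A$ on $L^2(x,\iy)$, acting as $f \mapsto -r\langle A,f\rangle A$ with respect to the inner product introduced just above this section.

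First I would compose with $(I-K^{(0)})^{-1} = I+R_x$ to obtain $L_1 = (I+R_x)K_1^{(r)}$, observing that rank-one-ness is preserved under left multiplication: $L_1 = -r\,\bigl[(I+R_x)A\bigr]\otimes A$, so $L_1 f = -r\,\langle A,f\rangle\,(I+R_x)A$. Next I would use the elementary identity for rank-one operators, namely that if $T = u\otimes v$ then $T^n = \langle v,u\rangle^{n-1}\,u\otimes v$ (proved by a one-line induction), so that $\Tr T^n = \langle v,u\rangle^n$. Applied with $u = -r(I+R_x)A$ and $v = A$, this gives
\[
 \Tr L_1^n = \bigl\langle A,\, -r(I+R_x)A\bigr\rangle^n = (-r)^n\,\bigl\langle A,(I+R_x)A\bigr\rangle^n.
\]

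To finish, I would invoke self-adjointness: since the Airy kernel $K^{(0)}(u,v)$ is symmetric, so is $R_x = K^{(0)}(I-K^{(0)})^{-1}$ and hence so is $I+R_x$; therefore $\langle A,(I+R_x)A\rangle = \langle (I+R_x)A,A\rangle$, which yields the stated formula. There is no genuine obstacle in the argument: the only nontrivial ingredient is the rank-one identification of $K_1^{(r)}$, which is already tabulated in \eqref{Ki's}; everything else is the standard computation of traces of powers of a rank-one operator together with the symmetry of the Airy resolvent.
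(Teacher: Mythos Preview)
Your proof is correct and is essentially the same argument as the paper's, just phrased in the abstract language of rank-one operators rather than written out as explicit iterated integrals; the paper simply expands $\Tr((I+R_x)K_1^{(r)})^n$ as an $n$-fold integral and observes it factors into $n$ copies of $\la(I+R_x)A,A\ra$. One small remark: your final appeal to self-adjointness is unnecessary, since the inner product $\la f,g\ra=\int_x^\iy f(u)g(u)\,du$ is symmetric to begin with.
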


\begin{proof} Indeed,
\bean
\lefteqn{\Tr((I+R_x)K_1)^n}\\
&=&(-r)^n\int_{(x,\iy)^n}du_1\ldots
du_n\left(\left((I+R_x) A(u_1)\right)A(u_2)\right)
\left(\left((I+R_x)A(u_2)\right)A(u_3)\right)
\\
& &\qquad\qquad\qquad \qquad\qquad\qquad\ldots
\left(\left((I+R_x)A(u_n)\right)
A(u_1)\right) 
\\
&=&(-r)^n\la (I+R_x)A(u),A(u)\ra^n .\eean\end{proof}

The
identities in the following Lemma can be found in or
deduced from Tracy-Widom \cite{TW-Airy}.

\begin{lemma} \label{TW}
\bea
R_x(x,x)&=&\la(I+R_x)A,A\ra \label{TW1}\\
\left(\frac{\pl}{\pl x}\!+\!\frac{\pl}{\pl
u}\!+\!\frac{\pl}{\pl v}\right)R_x(u,v)&=&-((I+R_x)A(u))
((I+R_x)A(v))\label{TW2}
\\
\left(\frac{\pl}{\pl x}+\frac{\pl}{\pl
u}\right)(I+R_x)A(u)&=&(I+R_x)A'(u)-(I+R_x)A(u)
 \la (I+R_x)A,A\ra .
 \label{TW3}\eea
%
%
\bea 2\la (I+R_x)A',A\ra -\la
(I+R_x)A,A\ra^2&=&-((I+R_x)A(x))^2 \label{TW4}
%
\\ 2\la (I+R_x)A,A''\ra -\la
(I+R_x)A',A'\ra&=&x\la(I+R_x)A,A\ra \label{TW4'}
\\%
  \frac{d}{dx}(I+R_x)A'(u)&=&-R_x(u,x)(I+R_x)A'(x)
\label{TW5}\eea
\bea \label{TW6}&&\frac{\pl}{\pl u}(I+R_x)A'(u)=u(I+R_x)A(u)-2\la
(I+R_x)A',A\ra
(I+R_x)A(u) \\
& &\qquad\qquad +\la (I+R_x)A,A\ra
(I+R_x)A'(u)+R(u,x)(I+R_x)A'(x).\no
\eea

\end{lemma}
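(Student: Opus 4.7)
The proof of Lemma \ref{TW} rests on three standard ingredients: (i) the Christoffel--Darboux / integrable-operator structure $K^{(0)}(u,v)=(A(u)A'(v)-A'(u)A(v))/(u-v)$, together with the equivalent representation $K^{(0)}(u,v)=\int_0^{\iy}A(u+w)A(v+w)\,dw$; (ii) the Airy ODE $A''(u)=uA(u)$; and (iii) the standard rule for differentiating a Fredholm resolvent with respect to the endpoint of its domain,
$$
\frac{\pl R_x(u,v)}{\pl x}=-R_x(u,x)\,R_x(x,v).
$$
Throughout I would write $Q(u):=(I+R_x)A(u)$ and $P(u):=(I+R_x)A'(u)$, both living on $L^2(x,\iy)$; the theory of integrable operators then yields the rank-two form $R_x(u,v)=(Q(u)P(v)-P(u)Q(v))/(u-v)$, which together with $A''=uA$ forces every identity in the Lemma to close up.

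I would establish (TW2) first, by combining the endpoint-derivative formula above with the Christoffel--Darboux form: the combination $(\pl_u+\pl_v)K^{(0)}(u,v)$ collapses, after use of the Airy ODE, to $-A(u)A(v)$; conjugating both sides by $(I+R_x)$ transports this to the stated factorization $-Q(u)Q(v)$. Equation (TW3) is then obtained by applying $(\pl_x+\pl_u)$ to $Q(u)=(I+R_x)A(u)$: the contribution from $\pl_xR_x$ provides a term $-R_x(u,x)Q(x)$ that upon use of (TW1) gives $-Q(u)\la Q,A\ra$, while the $\pl_u$ contribution yields $(I+R_x)A'(u)=P(u)$. The identities (TW5) and (TW6) follow the same pattern with $A$ replaced by $A'$: (TW5) is the direct application of $\pl_xR_x$ to the Volterra-like transform $(I+R_x)A'(u)$, while (TW6) arises by letting $\pl_u$ act on $(I+R_x)A'$ and using $A''(u)=uA(u)$ to re-introduce $A$, the boundary term at $u=x$ being controlled by (TW5).

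For (TW1), the cleanest route is via the integral representation $K^{(0)}=\int_0^{\iy}A(\cdot+w)\otimes A(\cdot+w)\,dw$. Writing $R_x=(I+R_x)K^{(0)}$ as an integral of rank-one operators gives
$$
R_x(u,v)=\int_0^{\iy}Q_w(u)A(v+w)\,dw,\qquad Q_w:=(I+R_x)A(\cdot+w),
$$
so that $R_x(x,x)=\int_0^{\iy}Q_w(x)A(x+w)\,dw$; a change of variable $u=x+w$ together with the self-adjointness of $(I+R_x)$ identifies this integral with $\la(I+R_x)A,A\ra$. Finally (TW4) and (TW4') come by differentiating the equality $\la Q,A\ra=R_x(x,x)$ with respect to $x$: the right-hand side is computed via (TW2)--(TW3), while the left-hand side is handled by one integration by parts on $(x,\iy)$, using $A''=uA$ to convert $\la Q,A''\ra$ into $x\la Q,A\ra$ and thereby producing the stated wronskian-type relations.

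The main obstacle throughout is the bookkeeping of the interaction between the parameter derivative $\pl_x$ of the operator $(I+R_x)$ and the spatial derivative $\pl_u$ of its argument at the diagonal $u=v=x$, particularly in (TW1) and (TW3) where the two effects compete. Every identity of the Lemma is ultimately dictated by the Airy ODE $A''=uA$ together with the rank-two integrable structure of $K^{(0)}$, so the check is systematic, but the signs and cross-terms at the boundary require careful tracking.
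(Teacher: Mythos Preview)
The paper does not prove this lemma at all: it simply states that ``the identities in the following Lemma can be found in or deduced from Tracy--Widom \cite{TW-Airy}'' and moves on. So there is no proof in the paper to compare against; you are supplying what the authors only cite.

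Your sketch follows the standard Tracy--Widom machinery and is correct in outline, but two of the sub-arguments do not close up as you describe. For (\ref{TW1}), the ``change of variable $u=x+w$ together with self-adjointness'' does not identify $\int_0^\infty Q_w(x)A(x+w)\,dw$ with $\la(I+R_x)A,A\ra$: if you unwind it you get $K^{(0)}(x,x)+(R_xK^{(0)})(x,x)=R_x(x,x)$, which is circular. The clean route is to derive (\ref{TW1}) \emph{after} (\ref{TW2}): compute $\frac{d}{dx}\la(I+R_x)A,A\ra$ using $\pl_xR_x(u,v)=-R_x(u,x)R_x(x,v)$, obtaining $-Q(x)^2$; this equals $(\pl_x+\pl_u+\pl_v)R_x|_{u=v=x}$ by (\ref{TW2}), so both sides have the same $x$-derivative and vanish as $x\to\infty$. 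Second, (\ref{TW4}) does not come from differentiating (\ref{TW1}) --- that just reproduces (\ref{TW2}) on the diagonal. Instead, pair (\ref{TW3}) against $A$ over $(x,\infty)$ and integrate the $\pl_uQ$ term by parts; the boundary contribution $-Q(x)A(x)$ combines with the $\pl_xQ$ term (which gives $-Q(x)(Q(x)-A(x))$) to produce $-Q(x)^2$, yielding exactly $2\la P,A\ra-\la Q,A\ra^2=-Q(x)^2$. Identity (\ref{TW4'}) is obtained similarly, using the commutator $[M,K^{(0)}]=A\otimes A'-A'\otimes A$ (equivalently the Christoffel--Darboux form) together with $A''=uA$. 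With these two fixes your derivation goes through.
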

%
\begin{lemma} \label{Q'}We have
\bean
Q'_0(x)&=&\la (I+R_x)A,A\ra\qquad \mbox{   and   } \qquad
Q''_0(x)=-((I+R_x)A(x))^2\\
\\
\frac{1}{2}Q'''_0(x)&=&-((I+R_x)A(x))((I+R_x)A'(x))
+((I+R_x)A(x))^2\la
(I+R_x)A,A\ra .\\
\eean
\end{lemma}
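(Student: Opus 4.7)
The plan is to obtain the three identities by successive differentiation of $Q_0(x)=\log\det(I-K^{(0)})_{(x,\infty)}$, each time converting the derivative into a kernel/resolvent quantity and then invoking the appropriate identity from Lemma~\ref{TW}.

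First I would handle $Q'_0$. Differentiating the Fredholm determinant with respect to the endpoint $x$ of the trace-class operator acting on $L^2(x,\infty)$ produces the diagonal value of the resolvent kernel, namely $Q'_0(x)=R_x(x,x)$. Applying identity (\ref{TW1}) then gives $Q'_0(x)=\langle (I+R_x)A,A\rangle$, which is the first claim.

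For $Q''_0$, I would differentiate $Q'_0(x)=R_x(x,x)$ using the total-derivative rule $\frac{d}{dx}R_x(x,x)=\left(\frac{\partial}{\partial x}+\frac{\partial}{\partial u}+\frac{\partial}{\partial v}\right)R_x(u,v)\Big|_{u=v=x}$; identity (\ref{TW2}), evaluated at $u=v=x$, collapses the right-hand side to $-((I+R_x)A(x))^2$, yielding the second claim. This step relies crucially on the fact that (\ref{TW2}) is the standard Its\^o--Izergin--Korepin-type differentiation rule for the Airy resolvent; no hidden difficulty, just bookkeeping of boundary terms.

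Finally, for $Q'''_0$ I would differentiate $Q''_0(x)=-((I+R_x)A(x))^2$ once more. The product rule gives
\[
Q'''_0(x)=-2\,(I+R_x)A(x)\cdot \frac{d}{dx}\bigl[(I+R_x)A(x)\bigr],
\]
and the derivative of the ``evaluation-at-$x$'' quantity is computed by the total-derivative identity $\frac{d}{dx}(I+R_x)A(x)=\left(\frac{\partial}{\partial x}+\frac{\partial}{\partial u}\right)(I+R_x)A(u)\Big|_{u=x}$, which by (\ref{TW3}) equals $(I+R_x)A'(x)-(I+R_x)A(x)\cdot\langle(I+R_x)A,A\rangle$. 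Substituting and dividing by $2$ produces exactly the stated formula for $\tfrac12 Q'''_0(x)$. The only subtlety I anticipate is the consistent sign/boundary-term bookkeeping when differentiating the determinant of an operator whose domain depends on $x$; once (\ref{TW1})--(\ref{TW3}) are taken as granted, no further analytic work is needed.
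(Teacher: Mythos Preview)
Your proposal is correct and follows essentially the same approach as the paper: both derive $Q'_0(x)=R_x(x,x)$ (the paper via an explicit Neumann-series expansion, you by citing the standard endpoint-differentiation fact), then apply (\ref{TW1}), (\ref{TW2}), and (\ref{TW3}) in succession exactly as you describe. The only cosmetic difference is that the paper spells out the Neumann-series computation for $Q'_0$; your version is a slightly more compressed but equivalent argument.
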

\begin{proof} One computes
\bean Q'_0(x)&=&
\frac{\pl}{\pl x}\Tr\log(I-K^{(0)}\raisebox{1mm}{$\chi$}{}_{(x,\iy)} )\\
\\
&=&-\frac{\pl}{\pl x}\Tr\left(K^{(0)}(u,v)+
\frac{1}{2}\int^{\iy}_xK^{(0)}(u,w)K^{(0)}(w,v)dw+\ldots\right)\\
\\
&=&-\frac{\pl}{\pl x}\left(\begin{array}{l}
\displaystyle{\int_x^{\iy}K^{(0)}(u,u)du+
\frac{1}{2}\int^{\iy}_x\int^{\iy}_x}K^{(0)}(u,w)K^{(0)}(
w,u)dudw\\
\\
\!\!+\displaystyle{\frac{1}{3}\int^{\iy}_x\!\!\!\int^{\iy}_x\!\!\!\int^{\iy}_x}K_0(u,w_1)K_0(w_1,w_2)
K^{(0)}(w_2,u)dudw_1dw_2+\ldots
\end{array}
\right)
\\
&=&K^{(0)}(x,x)+\int_x^{\iy}K^{(0)}(x,v)K^{(0)}(v,x)dv
\\
& &\qquad
+\int_x^{\iy}\!\!\!\int_x^{\iy}K^{(0)}(x,w_1)K^{(0)}
(w_1,w_2)K^{(0)}(w_2,x)dw_1dw_2+\ldots\\
\\
&=&R_x(x,x)
=\la (I+R_x)A,A\ra,\mbox{~using (\ref{TW1})} \eean by
the Neumann series for $R_x=(I-K^{(0)})^{-1}-I$.
 Moreover, using the previous result,
%
%
%
\bean Q''_0(x)=\frac{d}{dx}R_x(x,x)&=&
\left(\frac{\pl}{\pl x}+\frac{\pl}{\pl u}+\frac{\pl}{\pl
v} \right)R_x(u,v) \Bigl|_{u=v=x}
 \\
&=&-((I+R_x)A(u))(I+R_x)A(v))\Bigl|_{u=v=x}
~~ \mbox{using (\ref{TW2})}\\
&=&-((I+R_x)A(x))^2, \eean
 and again using the result just obtained and using identity
 (\ref{TW3}),
\bean
Q'''_0(x)&=&-2((I+R_x)A(x))\frac{d}{dx}(I+R_x)A(x)\\
\\
&=&-2((I+R_x)A(x))\left(\frac{\pl}{\pl x}+\frac{\pl}{\pl
u}\right)
(I+R_x)A(u)\Bigl|_{u=x}\\
\\
&=&-2((I+R_x)A(x))(I+R_x)A'(x)
+2((I+R_x)A(x))^2\la (I+R_x)A,A\ra
,\eean
 proving Lemma \ref{Q'}.\end{proof}

\begin{lemma}\label{Lemma:8.5}

\bean Q_1(x)&=&r\la (I+R_x)A,A\ra\\
 Q_2(x)&=&
  -\frac{r^2}{2}( (I+R_x)A(x))^2
\\
 Q_3(x)&=& \frac{r^3}{3}(((I+R_x)A(x))^2\la
(I+R_x)A,A\ra
-(I+R_x)A(x)(I+R_x)A'(x))\\&&+\frac{rx}{3}\la
(I+R_x)A,A\ra. \eean

\end{lemma}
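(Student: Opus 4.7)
The plan is to substitute the explicit rank-one decompositions of the kernels $K_n^{(r)}$ from Lemma \ref{lemma:kernels} into the polynomial expressions (\ref{Q's}) for $Q_n$ in terms of $L_i=(I+R_x)K_i^{(r)}$, then collapse each resulting scalar bracket to the boundary quantities $\alpha:=\langle(I+R_x)A,A\rangle$, $P:=(I+R_x)A(x)$ and $P':=(I+R_x)A'(x)$ via the Tracy-Widom identities of Lemma \ref{TW}. Throughout I will repeatedly exploit the symmetry of $(I+R_x)$ on $L^2(x,\iy)$ and the trace identity $\Tr[(I+R_x)(f\otimes g)]=\langle(I+R_x)f,g\rangle$ for rank-one pieces.

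First, $Q_1=-\Tr L_1$ follows immediately from $K_1^{(r)}=-r\,A\otimes A$ and Lemma \ref{lemma:8.1} with $n=1$. For $Q_2=-\Tr L_2-\tfrac12\Tr L_1^2$, I rewrite $K_2^{(r)}(u,v)$ using $(u-v)K^{(0)}(u,v)=A(u)A'(v)-A'(u)A(v)$; symmetry of $(I+R_x)$ then merges the resulting two brackets into $\Tr L_2=-r^2\langle(I+R_x)A',A\rangle$. Combined with $\Tr L_1^2=r^2\alpha^2$ from Lemma \ref{lemma:8.1}, identity (\ref{TW4}) $2\langle(I+R_x)A',A\rangle-\alpha^2=-P^2$ converts the sum exactly into $-\tfrac{r^2}{2}P^2$.

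For $Q_3=-\Tr L_3-\Tr L_1L_2-\tfrac13\Tr L_1^3$, I first simplify $K_3^{(r)}$ via $A''(u)=uA(u)$, so that $A''(u)A(v)+A(u)A''(v)=(u+v)A(u)A(v)$ and the exceptional $r^2$-term $(v-u)A(u)A(v)$ integrates to zero against the symmetric kernel $(I+R_x)(u,v)$, which accounts for the vanishing of the $r^2$-contribution predicted by the parity argument $Q_r(t,x)=Q_{-r}(-t,x)$. A routine computation then gives $\Tr L_3=-\tfrac{r^3}{3}(\delta+\gamma)+\tfrac{r}{3}(\gamma-2\delta)$ with $\gamma:=\langle(I+R_x)A',A'\rangle$ and $\delta:=\langle(I+R_x)A'',A\rangle$, which by (\ref{TW4'}) simplifies to $-\tfrac{r^3}{3}(\delta+\gamma)-\tfrac{rx}{3}\alpha$. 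The rank-one structure of $L_1=-r(I+R_x)A\otimes A$ yields $\Tr L_1L_2=r^3\alpha\,\langle(I+R_x)A',A\rangle$, and Lemma \ref{lemma:8.1} again delivers $\Tr L_1^3=-r^3\alpha^3$. Assembling everything gives
$$Q_3=\tfrac{r^3}{3}\bigl[\delta+\gamma-3\alpha\langle(I+R_x)A',A\rangle+\alpha^3\bigr]+\tfrac{rx}{3}\alpha,$$
and the remaining task is to identify the bracketed quantity with $P^2\alpha-PP'$.

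The main obstacle is exactly this final reduction, since (\ref{TW4}) and (\ref{TW4'}) eliminate $\langle(I+R_x)A',A\rangle$ and $\gamma$ but leave $\delta$ unresolved. I will supply the missing identity by testing (\ref{TW6}) against $A$ and integrating by parts on $(x,\iy)$: the left side produces $-P'A(x)-\gamma$, while the right side, after using symmetry of $(I+R_x)$ to rewrite $\langle R_x(\cdot,x),A\rangle=P-A(x)$, yields $\delta-\alpha\langle(I+R_x)A',A\rangle+(P-A(x))P'$. Comparing gives the needed relation $PP'=\alpha\langle(I+R_x)A',A\rangle-\gamma-\delta$. Substituting this, together with (\ref{TW4}) and (\ref{TW4'}), into the bracket collapses it to $P^2\alpha-PP'$ and completes the proof. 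The bookkeeping is delicate because both the $r^3$- and $r$-coefficients must simplify simultaneously and the $r^2$-coefficient must vanish identically, but each step is algebraic once the three identities of Lemma \ref{TW} are in hand.
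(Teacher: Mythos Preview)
Your plan is correct and follows essentially the same route as the paper: the same rank-one trace computations for $\Tr L_1,\Tr L_2,\Tr L_3,\Tr L_1L_2,\Tr L_1^3$, the same use of (\ref{TW4}) and (\ref{TW4'}), and the same missing identity $PP'=\alpha\langle(I+R_x)A',A\rangle-\gamma-\delta$ obtained by pairing (\ref{TW6}) with $A$ and integrating by parts (this is exactly the paper's identity (\ref{Identity3})). The only cosmetic difference is that you invoke (\ref{TW4'}) immediately to collapse the $r$-coefficient of $\Tr L_3$ to $-\tfrac{rx}{3}\alpha$, whereas the paper carries $2\delta-\gamma$ to the end before applying it.
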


\begin{proof} Indeed, by (\ref{Q's}) and (\ref{Ki's}),
\bea \label{Q1}
Q_1(x)=-\Tr L_1
&=&-\Tr K^{(r)}_1-\Tr R_xK^{(r)}_1  \\
&=&r\int_x^{\iy}du~A(u)\left(A(u)+
\int_x^{\iy}R_x(u,v)A(v)dv\right)\no\\
\no\\
&=&r\int_x^{\iy}A(u)((I+R)A)(u)du
=r\la (I+R_x)A,A\ra.\no
\eea
%
%
%
%
 %
 %
%
Computing $Q_2$ by (\ref{Q's}) involves $\Tr L_2$ and
$\Tr L_1^2$. Since $K_2^{(r)}(u,v)$ has a symmetric and
skew-symmetric part, and since $ I+R_x $ is symmetric,
and remembering the form of $K_2^{(r)}(u,v)$ in
(\ref{Ki's}), we have (by symmetry) and the fact that a
symmetric times a skew-symmetric operator is traceless,
 \bean
 \Tr L_2=\Tr(I+R_x)K^{(r)}_2&=&
 -\frac{r^2}{2}\Tr(I+R_x)(A'(u)A(v)+A(u)A'(v))
 \\
 &=&-r^2\la (I+R_x)A',A\ra .\eean
%
%
Hence, combining this result with the computation of
$\Tr L_1^2$ in Lemma \ref{lemma:8.1}, one finds, using
(\ref{TW4}),
\bea \label{Q2}
Q_2=-\Tr(L_2+\frac{L_1^2}{2})
%
&=&\frac{r^2}{2}(2\la (I+R_x)A',A\ra -\la (I+R_x)A,A\ra ^2)
 \\
&=&-\frac{r^2}{2}( (I+R_x)A(x))^2
.\no\eea
%
%
The computation of $Q_3$, by (\ref{Q's}), involves $\Tr
L_3$,~$\Tr L_1L_2$ and $\Tr L_1^3$. Using again the fact
that a symmetric times a skew-symmetric operator is
traceless, one reads off from the form of $K_3^{(r)}$
(see (\ref{Ki's})) the following:
%
%
\bean \Tr L_3 ={\Tr(I+R_x)K^{(r)}_3}
&=&-\frac{r^3}{3}(\la(I+R_x)A,A''\ra +\la (I+R_x)A',A'\ra )
\\
&&\qquad\qquad -\frac{r}{3}(2\la (I+R_x)A,A''\ra -\la
(I+R_x)A',A'\ra )
 \eean
%
%
%
and using
\vspace*{-.7cm}
 { \bean
 {(I+R_x)K^{(r)}_2}
&=&-\frac{r^2}{2}\left(((I+R_x)A'(u))A(v)
+((I+R_x)A(u))A'(v)\right )\\
\\
&&\qquad\qquad -\frac{r}{2}\left(((I+R_x)A'(u))A(v)-
((I+R_x)A(u))A'(v)\right ), \eean}
%
 one computes
%
%
%
{
 \bean
\Tr L_1L_2 &=& \Tr(I+R_x)K_1^{(r)}(I+R_x)K_2^{(r)}\\
&=&\frac{r^3}{2}\int\!\!\!\int_{(x,\iy)^2}((I+R_x)A)(w_1)A(w_2)
((I+R_x)A')
(w_2)A(w_1)dw_1dw_2\\
\\
& &+\frac{r^3}{2}\int\!\!\!
\int((I+R_x)A)(w_1)A(w_2)((I+R_x)A)(w_2)A'(w_1)dw_1dw_2
\\
\\
& &+\frac{r^2}{2}\int\!\!\int((I+R_x)A)(w_1)A(w_2)
((I+R_x)A')(w_2)A(w_1)dw_1dw_2\\
\\
& &-\frac{r^2}{2}\int\!\!\!\int((I+R_x)A)(w_1)A(w_2)
 ((I+R_x)A)(w_2)A'(w_1)dw_1dw_2
 \\
&=&\frac{r^3}{2}\left(\begin{array}{c}\la
(I+R_x)A,A\ra\la
(I+R_x)A',A\ra \\
\\
+\la (I+R_x)A,A\ra\la A',(I+R_x)A\ra\end{array}\right)\\
\\
& &+\frac{r^2}{2}\left(\begin{array}{c}\la
(I+R_x)A,A\ra\la
(I+R_x)A',A\ra \\
\\
-\la (I+R_x)A,A\ra\la A',(I+R_x)A\ra\end{array}\right)\\
\\
&=&r^3\la (I+R_x)A,A\ra \la (I+R_x)A',A\ra .\eean}
%
%
Putting the pieces together and using (\ref{Q's}) and
using Lemmas \ref{lemma:8.1} and \ref{Q'}, one obtains
{
 \bea \label{Q3}
 \qquad Q_3&=&-\Tr \Bigl(L_3+ {L_1L_2}
+\frac{1}{3}L_1^3\Bigr) \\
 &=&\frac{r^3}{3}\left(\begin{array}{c}\la
(I+R_x)A,A''\ra +\la
(I+R_x)A',A'\ra  \no\\
-3\la (I+R_x)A,A\ra \la (I+R_x)A',A\ra\no\\
+\la (I+R_x)A,A\ra ^3\end{array}\right)
 \\
 &&+\frac{r}{3}(2\la (I+R_x)A,A''\ra -
 \la (I+R_x)A',A'\ra )\no\\
\no\\
&=&\frac{r^3}{3}\left(\begin{array}{l}\la
(I+R_x)A,A''\ra +\la
(I+R_x)A',A' \ra  \no\\
-\la (I+R_x)A,A\ra\la (I+R_x)A',A\ra 
\no\\
+\la (I+R_x)A,A\ra \Bigl(\la  (I+R_x) A,A\ra^2-2\la
(I+R_x)A',A\ra \Bigr)
\end{array}\right)\no\\
\no\
& &+\frac{r}{3}(2\la (I+R_x)A,A''\ra -\la (I+R_x)A',A'\ra )\no\\
\no\\
&{=}&\frac{r^3}{3}(-(I+R_x)A(x)(I+R_x)A'(x)+
((I+R)A(x))^2\la
(I+R_x)A,A\ra )
 \no\\& &
+\frac{rx}{3}\la (I+R_x)A,A\ra
 ,
\no\eea}
%
%
using in the last equality (\ref{TW4}), (\ref{TW4'})
combined with
 (\ref{Identity3}) below. Then, using the differential equation $uA(u)=A''(u)$, one
checks:
\be  \label{Identity3}\ee
\vspace{-.8cm}
\bean
\lefteqn{\la (I+R_x)A',A'\ra} \\
&=&-A(x)(I+R_x)A'(x)-\la\frac{\pl}{\pl u}(I+R_x)A',A\ra
\mbox{~~(by integration by parts)}\no\\
&=&-A(x)(I+R_x)A'(x)-\la (I+R_x)A(u),uA(u)\ra
 +\la (I+R_x)A',A\ra \la (I+R_x)A,A\ra\no\\
& & -\la R(u,x),A(u)\ra
 (I+R)A'(x)
,~~~\mbox{using (\ref{TW6}),} \no
\\
 &=&-(I+R_x)A(x)(I+R_x)A'(x)-\la (I+R_x)A,A''\ra +\la (I+R_x)A',A\ra \la (I+R_x)A,A\ra.
\no\eean
%
%
\end{proof}

\begin{proof}[Proof of Proposition~\ref{Theo:8.1}]
 The formulae follow immediately from comparing the
  formulae of Lemmas~\ref{Q'} and~\ref{Lemma:8.5}.
\end{proof}





  \ack
  Pierre van Moerbeke thanks G\'erard Ben Arous for a
useful conversation (May 2006) and thanks Patrik Ferrari for a very interesting discussion (January 2008) concerning Theorem \ref{Theo1}. Mark Adler and Pierre van Moerbeke gratefully acknowledge the support of a National Science Foundation grant \#~DMS-07-04271. This work was partially done while PvM was a member of the Miller Institute for Research in Science, Berkeley, California. The support of National Science Foundation grant \#~DMS-07-0427, a European Science Foundation grant (MISGAM), a Marie Curie Grant (ENIGMA), a FNRS grant and a "Interuniversity Attraction Pole" (Belgium) grants are gratefully acknowledged.



\frenchspacing
\bibliographystyle{plain}

\begin{thebibliography}{99}




 \bibitem{AvM-asymmPearcey}
 M. Adler, N. Orantin and P. van Moerbeke: {\em Universality for the
 Pearcey process and a PDE for the transition probabilities}, Preprint 2008. 

\bibitem{AvM-PDE-GUE}
  ~M. Adler and P. van Moerbeke:
  {\sl  Hermitian, symmetric and symplectic random ensembles: PDE's
  for the distribution of the spectrum}, Annals of Math., {\bf
  153}, 149--189 (2001).



\bibitem{AvM-Airy-Sine}
M. Adler and P. van Moerbeke: {\sl  PDE's for the joint
distributions of the Dyson, Airy and Sine processes},
The Annals of Probability, {\bf 33}, 1326-1361 (2005).
(arXiv:math.PR/0302329 and math.PR/0403504)


\bibitem{AvM-Pearcey}
M. Adler and P. van Moerbeke: {\sl PDE's for the
Gaussian ensemble with external source and
  the Pearcey distribution}, Comm. Pure and Appl. Math,
  {\bf 60} 1-32 (2007) (arXiv:math.PR/0509047)


  \bibitem{AvM-Pearcey-joint}
 M. Adler and P. van Moerbeke: {\em Joint probability for the
 Pearcey process}, 
  (arXiv:math/0612393).


 



\bibitem{AvM-Mops}
 M. Adler, P. van Moerbeke and P. Vanhaecke:
{\sl  Moment matrices and multicomponent KP, with
applications to random matrix theory}, (2006)
(arXiv:math-ph/0612064)




\bibitem{AptBleKui} 
A. Aptekarev, P. Bleher and A. Kuijlaars: {\em Large $n$
limit of Gaussian random matrices with external source}.
II. {Comm. Math. Phys.}  {\bf 259} 367--389 (2005)
(arXiv: math-ph/0408041)


\bibitem{BBP}
Jinho Baik, G\'erard Ben Arous, Sandrine P\'ech\'e: {\em
Phase transition of the largest eigenvalue for non-null
complex sample covariance matrices}, Ann. Probab.  {\bf
33}, no. 5, 1643--1697 (2005) (arXiv:math/0403022)



\bibitem{Baik}
 Jinho Baik: {\em Painlev\'e formulas of the limiting distributions for
non-null complex sample covariance matrices}, Duke Math.
J.  {\bf 133}, no. 2, 205--235 (2006)
(arXiv:math/0504606)





\bibitem{BleKui1}
P. Bleher and A. Kuijlaars: {\sl  Random matrices with
external source and multiple orthogonal polynomials},
Internat. Math. Research Notices {\bf 3}, 109--129
(2004) (arXiv:math-ph/0307055).










\bibitem{Brezin2} 
E. Br\'ezin and S. Hikami: {\sl  Correlations of nearby
levels induced by a random potential}, Nuclear Physics
{\bf B 479}, 697--706 (1996).



\bibitem{Brezin3} 
E. Br\'ezin and S. Hikami: {\sl  Extension of level
spacing universality}, Phys. Rev., {\bf E 56}, 264--269
(1997).



\bibitem{Brezin4} 
E. Br\'ezin and S. Hikami: {\sl  Universal singularity
at the closure of a gap in a random matrix theory},
Phys. Rev., {\bf E 57}, 4140--4149 (1998).


\bibitem{Brezin5} 
E. Br\'ezin and S. Hikami: {\sl  Level spacing of random
matrices in an external source}, Phys. Rev., {\bf E 58},
7176--7185 (1998).

 \bibitem{Deift}
  P. Deift, T. Kriecherbauer, K. T-R McLaughlin,
 S. Venakides and X. Zhou: {\sl  Strong asymptotics of
 orthogonal polynomials with respect to exponential
 weights}, Comm. of Pure and Appl. Math. {\bf 52},
 1491--1552 (1999)


 \bibitem{Dyson} 
 F.J. Dyson: {\sl  A Brownian-Motion Model for the
 Eigenvalues of a Random Matrix}, Journal of Math. Phys.
 {\bf 3}, 1191--1198 (1962)






 \bibitem{Grabiner} 
 D.J. Grabiner: {\sl  Brownian-Motion in a Weyl chamber,
 non-colliding particles and random matrices}, Ann.
 Institut H. Poincar\'e {\bf 35}, 177--204 (1999)

\bibitem{HM} 
S. P. Hastings and J. B. McLeod: {\sl  A boundary value
problem associated with the second Painlev\'e
transcendent and Korteweg-de Vries equation}, Arch.
Rational Mech. Anal. {\bf 73}, 31--51 (1980)


\bibitem{Johansson1} 
K. Johansson: {\sl  Universality of the Local Spacing
distribution in certain ensembles of Hermitian Wigner
Matrices}, Comm. Math. Phys. {\bf 215}, 683--705 (2001)


   \bibitem{Johansson2}
   K. Johansson:
  {\sl   Discrete Polynuclear Growth and Determinantal
  Processes},  Comm. Math. Phys. {\bf 242}, 277--329
  (2003).

 \bibitem{Johansson3}
  K. Johansson: {\sl   The Arctic circle boundary and
  the Airy process}, Ann. Probab.  {\bf 33}, no. 1, 1--30 (2005)
 (arXiv: Math. PR/0306216)












\bibitem{Karlin} 
S. Karlin and J. McGregor: {\sl  Coincidence
probabilities}, Pacific J. Math. {\bf 9}, 1141--1164
(1959).

\bibitem{Okounkov} 
A. Okounkov and N. Reshetikhin: {\sl Random skew plane
partitions and the Pearcey process}, Comm. Math. Phys.
{\bf 269}, no. 3, 571--609 (2007)
(arXiv:math.CO/0503508)




\bibitem{Pastur} 
L.A. Pastur: {\sl  The spectrum of random matrices
(Russian)}, Teoret. Mat. Fiz. {\bf 10}, 102--112 (1972).




 \bibitem{Spohn}
  M. Pr\"ahofer and H. Spohn:
 {\sl   Scale Invariance of the PNG Droplet and the Airy
 Process}, J. Stat. Phys. {\bf 108}, 1071--1106
 (2002). (arXiv:Math. PR/0105240)










 \bibitem{TW-Airy}
 C.A. Tracy and H.~Widom : {\em Level-spacing
 distributions and the {A}iry kernel}, Comm. Math. Phys.,
  {\bf 159}, 151--174, (1994).





 \bibitem{TW-Dyson}
 C.A. Tracy and H.~Widom : {\em Differential equations
 for Dyson processes}, Comm. Math. Phys.  {\bf 252}, no.
1-3, 7--41 (2004) (ArXiv:Math. PR/0309082)































\bibitem{TW-Pearcey}
C. A. Tracy and H. Widom: {\sl The Pearcey Process},
Comm. Math. Phys.  {\bf 263}, no. 2, 381--400 (2006).
(arXiv:math. PR /0412005)





\bibitem{Peche}
Sandrine P\'ech\'e: {\em
    The largest eigenvalue of small rank perturbations
    of Hermitian random matrices}
    Probab. Theory Related Fields  {\bf 134},  no. 1,
    127--173 (2006).
(arXiv:math/0411487)


\bibitem{UT}
 K. Ueno and K. Takasaki: {\em Toda Lattice
Hierarchy}, Adv. Studies in Pure Math. {\bf 4}, 1--95
(1984).

\bibitem{Widom}
H.~Widom : {\em On asymptotics for the Airy process},
 J.
 Statist. Phys.  {\bf 115}, 1129--1134 (2004).
(ArXiv. Math. PR/0308157)



  \bibitem{Zinn1} 
  P. Zinn-Justin:
 {\sl   Random Hermitian matrices in an external field},
 Nuclear Physics {\bf B 497}, 725--732
 (1997).


\bibitem{Zinn2} 
P. Zinn-Justin: {\sl   Universality of correlation
functions in Hermitian random matrices in an external
field}, Comm. Math. Phys. {\bf 194}, 631--650 (1998).




\end{thebibliography}

\vspace*{1cm}

adler@brandeis.edu

jonathan.delepine@uclouvain.be

pierre.vanmoerbeke@uclouvain.be

vanmoerbeke@brandeis.edu

\end{document}